\documentclass{article}

\usepackage{amssymb}
\usepackage{amsmath}
\usepackage{amsthm}
\usepackage{graphicx}
\usepackage{subcaption}
\usepackage{mathtools}

\usepackage[preprint]{neurips_2026}

\usepackage[utf8]{inputenc} 
\usepackage[T1]{fontenc}    
\usepackage{hyperref}       
\usepackage{url}            
\usepackage{booktabs}       
\usepackage{amsfonts}       
\usepackage{nicefrac}       
\usepackage{microtype}      
\usepackage{xcolor}         
\usepackage{tikz}
\usepackage{enumitem}

\setlist[itemize]{leftmargin=1.5em}

\newtheorem{theorem}{Theorem}[section]

\newtheorem{lemma}{Lemma}[section]

\newtheorem{proposition}{Proposition}[section]

\newcommand{\KL}{\operatorname{KL}}
\newcommand{\thetalo}{\theta_-}
\newcommand{\thetahi}{\theta_+}
\newcommand{\R}{\mathbb{R}}
\newcommand{\Pbb}{\mathbb{P}}
\newcommand{\E}{\mathbb{E}}
\newcommand{\Cov}{\operatorname{Cov}}
\newcommand{\Var}{\operatorname{Var}}
\newcommand{\supp}{\operatorname{supp}}
\newcommand{\ind}{\mathbf 1}
\newcommand{\norm}[1]{\lVert #1\rVert}
\newcommand{\abs}[1]{\lvert #1\rvert}
\newcommand{\op}{\mathrm{op}}
\newcommand{\scr}{\mathrm{scr}}

\definecolor{myblue}{RGB}{36,72,175}

\usepackage[utf8]{inputenc} 
\usepackage[T1]{fontenc}    
\usepackage{hyperref}       
\usepackage{url}            
\usepackage{booktabs}       
\usepackage{amsfonts}       
\usepackage{nicefrac}       
\usepackage{microtype}      
\usepackage{xcolor}         

\title{On Observation Time for Recovering \\ Latent Hawkes Networks}

\author{%
  Jonas Linkerhägner \\
  University of Basel\\
  \texttt{jonas.linkerhaegner@unibas.ch} 
  \And
  Michele Bortolasi\\
  University of Basel\\
  \texttt{michele.bortolasi@unibas.ch}
  \And
  Lorenzo Baldassari\\
  University of Basel\\
  \texttt{lorenzo.baldassari@unibas.ch}
  \And
  Maarten V. de Hoop\\
  Rice University \\
  \texttt{mvd2@rice.edu}
  \And
  Ivan Dokmanić\\
  University of Basel\\
  \texttt{ivan.dokmanic@unibas.ch}
}

\begin{document}

\maketitle

\begin{abstract}
Dynamics of interacting systems in engineering, society, and nature often evolve over latent networks that govern which entities can interact. We study the problem of inferring these networks from event-based observations, which arise naturally in finance, seismology, and neuroscience. While there is substantial algorithmic work addressing this important problem, theoretical results are scarce. In this paper we ask the following fundamental question: what is the minimum time that one must observe the dynamics in order to exactly recover the underlying network, as a function of the number $d$ of interacting entities? For a class of stationary Hawkes processes with sparse, weak interactions, we prove that an observation time of order $\log d$ is sufficient and necessary. For the upper bound we construct a two-stage estimator that uses clipped and binned event data for screening, followed by a least-squares refinement, and apply concentration bounds derived from the Poisson cluster representation. For the lower bound we combine Fano’s inequality with Jacod’s Girsanov formula for point processes on a suitable subclass of networks. 

\end{abstract}

\section{Introduction}\label{sec:intro}

Reconstructing the latent interaction network is a central problem in the statistical analysis of point processes.
In this paper, we ask:  
\emph{How must the observation time scale with the number of nodes $d$ in order to recover the directed interaction network exactly?}

We give an answer for Hawkes processes \cite{achab2018uncovering, bacry2020sparse, embrechts2018hawkes}, when excitations of events occur over a sparse, weakly coupled interaction network.
Such processes model a variety of phenomena, including aftershock propagation after earthquakes \cite{ogata1988statistical, zhuang2002stochastic, marsan2008extending}, high-frequency trading \cite{bowsher2007modelling, large2007measuring, bacry2012non}, neuronal spike trains \cite{truccolo2005point, pillow2008spatio}, social activity and information cascades \cite{zhou2013learning, he2015hawkestopic}, and epidemic spreading \cite{chiang2022hawkes}.

Suppose, for example, that we record earthquake occurrences in a seismic region. 
The nodes of the underlying network could be subsurface elements such as fault segments or spatial cells; the edges may describe direct triggering influence, mediated, for instance, by stress transfer: that is, how a seismic event in one element modulates the probability that events will occur in other elements.
Recovering this directed network helps understand how seismic activity is coordinated and how triggering effects propagate \cite{nandan2022are, kaya2026deep}.

Such interacting systems are high-dimensional but sparse: a seismic region may contain many fault segments, but each segment directly interacts only with a small number of nearby segments as dictated by the fault geometry.
This motivates studying how the observation time required for network reconstruction scales with the number of nodes \(d\), when each node is directly influenced by fixed \(k \ll d\) others, and these direct influences are individually moderate.

\begin{figure}
    \centering
    \includegraphics[width=0.99\linewidth]{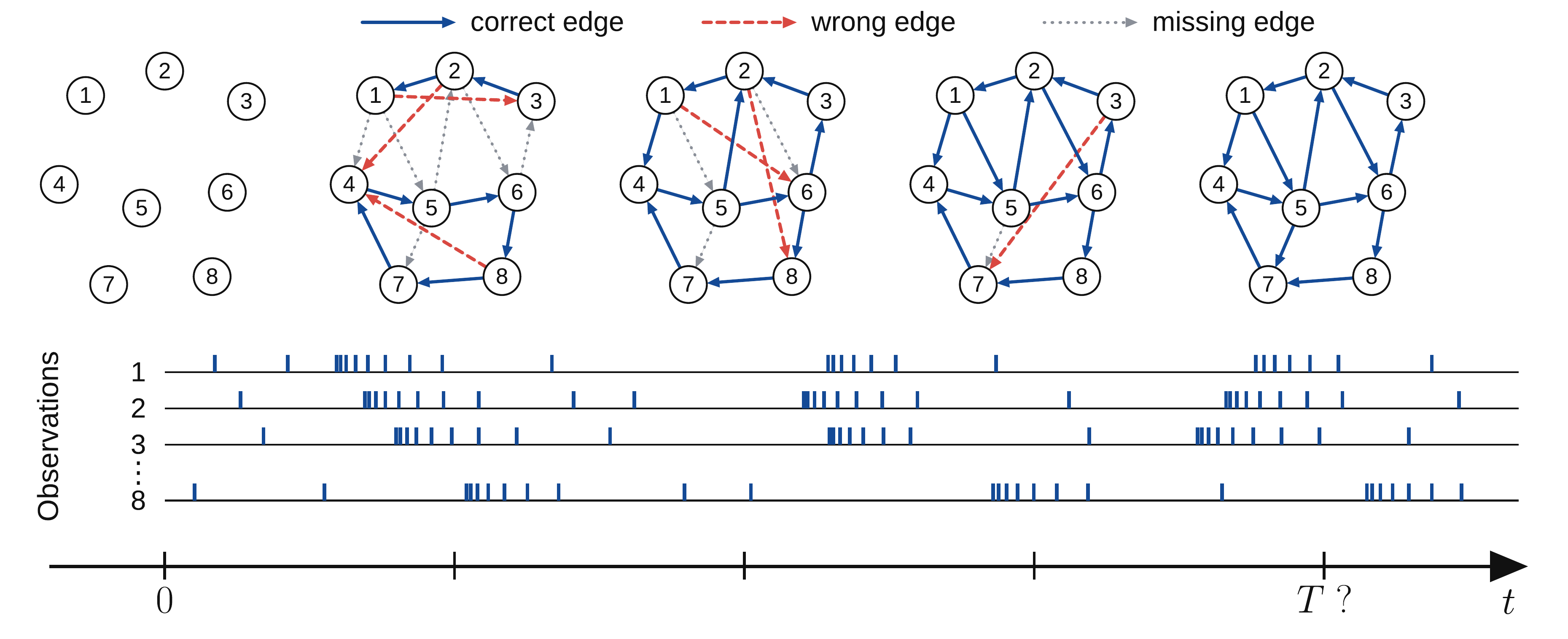}
    \vspace{-0.6em}
    \caption{We observe event streams for each node over a window $[0,T]$, and the goal is to recover the hidden directed interaction network. Independent of the choice of a specific estimator, for short observation times, different networks remain statistically indistinguishable, leading to wrong and missing edges. With sufficiently long time, the true network becomes identifiable. Our work quantifies how the observation-time requirement scales with the network size $d$.}
    \label{fig:illustration}
    \vspace{-0.6em}
\end{figure}

Figure~\ref{fig:illustration} provides an illustration. 
If the observation time $T$ is too small, the observations do not yet include enough information to recover the network correctly. When $T$ is of the correct order in $d$, exact recovery becomes possible.

Here we provide a theoretical analysis of the network recovery problem. 
We prove that the observation time required for exact network recovery is of order $\log d$.
For the upper bound, we construct a two-stage estimator in Section~\ref{sec:upper-bound} that uses clipped and binned event data for screening, followed by local least-squares regression and thresholding. Combined with concentration bounds derived from the Poisson cluster representation \cite{hawkes1974cluster}, it shows that exact support recovery is possible with high probability when
\[
T \gtrsim   \log d .
\]
In Section~\ref{sec:lower-bound}, we prove a matching information-theoretic lower bound by applying Fano's inequality \cite{gerchinovitz2020fano} to an explicit subclass of interaction networks. This reduces exact recovery to controlling the mutual information between the latent network and the observations, which we bound via the corresponding Kullback--Leibler divergences using Jacod’s Girsanov formula for point processes \cite{jacod1975multivariate}.
As a consequence, for all $d$ large enough, any estimator fails to achieve exact support recovery with non-vanishing probability when the observation time is  $T\lesssim\log d$. 

\subsection*{Related Work}
Hawkes processes are self- and mutually-exciting point processes \cite{hawkes1971spectra}, where events can increase the probability of future events. 
The cluster representation characterizes events as branching cascades \cite{hawkes1974cluster}: one event may trigger further events, which may again trigger subsequent generations of events.
Classical work on stability and stationarity conditions \cite{bremaud1996stability, bremaud2002rate}, and likelihood-based estimation methods \cite{ogata1978estimators, ozaki1979maximum} provide the foundations for our analysis.
Since then, Hawkes processes have been widely used to model events such as aftershocks \cite{nandan2022are}, neuronal spike trains \cite{kobayashi2019reconstructing}, social activities \cite{crane2008robust, rizoiu2017expecting, chen2018marked}, epidemics \cite{meyer2012space, unwin2021using}, crime \cite{mohler2011self, clark2018modeling, zhuang2019semiparametric} and trades and volatility in finance \cite{bacry2015hawkes, ait2015modeling, bondi2024rough}.

Existing approaches to recover latent Hawkes networks use penalized \cite{hansen2015lasso, bacry2020sparse, cai2024latent} and nonparametric estimation \cite{bacry2016first, donnet2020hawkes}, causal graph recovery \cite{achab2018uncovering}, and graph screening \cite{chen2017nearly}.
More recent approaches include Bayesian structure learning \cite{linderman2014discovering, rasmussen2013bayesian, malem2022variational} and Granger-causal recovery \cite{xu2016learning, wu2024learning}. 
Neural relational inference uses neural networks to infer relations in dynamical systems \cite{kipf2018neural, pan2024graph, wu2025riva}, including point processes \cite{zhang2021neural}.
Other works focus on learning full intensity functions of point processes \cite{omi2019fully, ludke2023add}, often using Neural ODEs \cite{jia2019neural, zhang2024neural}. We complement this line of work with a theoretical result on how the observation time for exact graph recovery scales with network size.

The main challenge compared to other high-dimensional graphical models \cite{meinshausen2006high} is that for Hawkes processes, the network to be recovered enters the model endogenously: the interactions of the latent intensities are governed by the network we want to recover. This makes the problem challenging, since indirect excitation can mimic direct edges that are not present, and even the absence of events is informative.
It is qualitatively distinct from observation-time scaling results in Gaussian Graphical models \cite{wainwright2009information, wainwright2009sharp}, Markov random fields \cite{ravikumar2010high} and Ornstein--Uhlenbeck diffusions \cite{bento2010learning, bento2011information}, where network dependence is linear and noise is additive.

\section{Model, assumptions, and main result}\label{sec:setting}
In this section, we introduce the Hawkes model under study, formulate the exact support-recovery problem, and specify the sparse, weak interaction setting in which our analysis takes place.

\subsection{Exponential Hawkes process}

We consider a $d$-dimensional exponential Hawkes process observed over a stationary time window $[0,T]$. Writing $N:=(N_1,\dots,N_d)$ for the counting process, and using the exponential kernel with decay rate $\beta>0$, let 
\[
X_j(t):=\int_{(-\infty,t]} e^{-\beta (t-s)}\, dN_j(s),
\]
denote the exponential shot-noise process associated with node $j$. Each coordinate evolves according to the SDE
\[
    dX_j(t)=-\beta X_j(t)\,dt+dN_j(t).
\]
Thus, with the exponential kernel, the Hawkes process admits a Markov representation. The state variable used throughout the proofs is \[ 
X(t)=(X_1(t),\dots,X_d(t)).
\] 
The conditional intensity of node $i$ is
\[
\lambda_i(t)=\mu_i+\sum_{j=1}^d \theta_{ij} X_j(t-), \qquad i\in[d].
\]
Here, $\mu_i>0$ is the background intensity of node $i$ and $[d]=\{1,\dots ,d\}$. $X_j(t-)$ denotes the left limit of the state just before time $t$, ensuring that the intensity is predictable. The matrix 
\[
\Theta := (\theta_{ij})_{i,j\in[d]}
\]
is the interaction matrix. We consider the excitatory case, so for all $i,j \in [d]$ we have $\theta_{ij}\geq0$. 
This model is a standard choice for multivariate event data with self- and cross-excitation, where $\theta_{ij}>0$ means that the activity at node $j$ directly increases the event rate at node $i$ through the current state $X_j(t-)$. $\theta_{ij}=0$ means that there is no edge from $j$ to $i$.
We do not consider inhibitory Hawkes processes in this work, since this would require additional constraints, or a nonlinear intensity function to ensure that the intensities remain nonnegative.

To focus narrowly on the task of interaction recovery, we treat the decay rate $\beta>0$ as known throughout the paper. Also we take the observed data to be 
\[ Y_T=(X(0),N_{[0,T]}), 
\] 
where $N_{[0,T]}$ is the counting process restricted to the window $[0,T]$. For exponential Hawkes with known $\beta$, \(X(0)\) is the Markov state summarizing the past. The fact that \(X(0)\) is treated as observable is an initialization assumption rather than additional information. In practice, \(X(0)\) is available in stationary simulations after burn-in, and can be approximated from a warm-up window of pre-\(0\) event data.

\subsection{Exact support-recovery}

Our goal is exact network recovery from the observed data $Y_T$. 
In practice this corresponds, for example, to inferring regions with a higher aftershock risk from observing earthquakes, identifying assets that transmit volatility shocks to other assets, or identifying epidemic spreading routes from observing infections. 
Since $\theta_{ij}>0$ corresponds to a directed edge from node $j$ to node $i$, network recovery corresponds to  support recovery of the interaction matrix $\Theta$.
For each row $i$, let
\[
S_i := \{j \in [d] : \theta_{ij} > 0\}
\]
denote the parent set of node $i$, that is, the set of source nodes that directly excite node $i$. We ask how the required observation time $T$ must scale with the ambient dimension $d$ to recover the full support pattern $(S_i)_{i=1}^d$ exactly with high probability. 

\subsection{Sparse weak-interaction model class}

We now formalize the sparse, weak-interaction setting in which exact support recovery will be studied. We fix constants
\[
\beta >0, \qquad 0<\mu_-\le \mu_+<\infty,
\qquad
0<w_-\le w_+<\infty,
\]
an integer $k\ge 1$, and a coupling level $\alpha>0$. 

\paragraph{Sparse network structure.} Let $\mathcal B_{d,k}$ denote the class of nonnegative matrices $B=(b_{ij})\in\R_+^{d\times d}$ such that
\begin{itemize}
\item each row of $B$ has at most $k$ nonzero entries:
\[
\|b_{i\cdot}\|_0\le k,
\qquad i\in[d];
\]
\item every nonzero entry of $B$ lies in $[w_-,w_+]$:
\[
b_{ij}\neq 0
\quad\Longrightarrow\quad
b_{ij}\in[w_-,w_+].
\]
\end{itemize}

$\mathcal B_{d,k}$ specifies the structural part of the model class: each node has at most $k$ direct parents, and every nonzero edge weight is bounded above and below. 

\paragraph{Hawkes parameters.} We then consider Hawkes models whose interaction matrix is of the form
\[
\Theta=\alpha B,
\qquad
B\in\mathcal B_{d,k},
\]
and whose background intensities satisfy
\[
\mu_i\in[\mu_-,\mu_+],
\qquad i\in[d].
\]
In addition, we define 
\[
\thetalo := \alpha w_-,
\qquad
\thetahi := \alpha w_+,
\qquad
\gamma := \frac{k\thetahi}{\beta},
\]
where \(\gamma\) is a uniform upper bound on the total excitation that
one row can receive, measured relative to the exponential decay rate \(\beta\).
Indeed, since each row has at most \(k\) nonzero entries and each interaction weight is at most \(\thetahi\),
\[
\frac{1}{\beta}\rho(\Theta)
\le
\frac{1}{\beta}\|\Theta\|_\infty
\le
\frac{k\thetahi}{\beta}
=
\gamma .
\]
We impose the subcriticality condition \(\gamma<1\), a sufficient form of the standard Hawkes stability condition \cite{hawkes1971spectra}. It guarantees existence of a unique stationary law and excludes explosive excitation cascades.
Together, this yields the class of Hawkes parameters studied in the paper:
\[
\mathcal G_{d,k}
:=
\Bigl\{
(\mu,\Theta):
\Theta=\alpha B,\ B\in\mathcal B_{d,k},\ \mu_i\in[\mu_-,\mu_+]\ \forall i,\ \gamma<1
\Bigr\}.
\]
The weak-interaction setting captures large sparse systems where individual interactions are weak, such as neuronal networks, social activity cascades, and epidemic propagation. Since excitation is sufficiently moderate in these cases, the contribution of a true parent remains detectable, while higher-order cascade effects remain controlled. This is precisely the regime in which one can hope to prove sample complexity bounds by combining support separation with concentration uniformly in $d$.

Finally, the smallest possible nonzero interaction strength in this class has to fulfill $\thetalo>0$.
This is the minimum-signal condition needed when $d$ grows: without a positive lower bound on nonzero edges, true interactions could become arbitrarily weak, and exact support recovery becomes impossible.

\subsection{Main result}

The family $\mathcal G_{d,k}$ models large sparse systems of weakly interacting event processes. 
In this regime, our main result identifies the correct scaling of the observation time with dimension $d$ for exact network recovery: for fixed $k$ and fixed coupling level $\alpha$, recovery is possible if and only if
\[
    T \asymp \log d.
\]
To prove this, we combine an achievability result in Theorem~\ref{thm:upper} with an impossibility result in Theorem~\ref{thm:lower-bound}, provided  in Sections~\ref{sec:upper-bound} and~\ref{sec:lower-bound}, respectively.

\section{Upper bound on the observation time}\label{sec:upper-bound}

In this section, we prove the upper bound by constructing an explicit estimator. 
Figure~\ref{fig:estimator_overview} summarizes the procedure.
Since exact support recovery is equivalent to identifying, for each node $i$, the set of nodes with a direct influence on it, the estimator is built node by node. Instead of attempting to recover the full parent set in one step, the estimator proceeds in two stages.

In the screening stage, the estimator asks which nodes are active shortly before node $i$ has an event. 
This is measured by the empirical covariance score between the current activity of a candidate node $j$ and the occurrence of a future event of node $i$.
The activity is reconstructed from the observed event counts, sampled at time resolution $h$, and clipped at level $R$.
The $m$ nodes with the largest scores form a candidate list of possible parents. 
In the weak-interaction regime, these scores are shown to separate true parents from non-parents, ensuring that the true support is contained in the candidate set (sure screening). 

In the second stage, the estimator reevaluates this reduced set so as to separate genuine direct influences from multi-hop correlations.
It fits a local ordinary least-squares regression on the selected candidates and thresholds the estimated coefficients.
Row sparsity keeps this low-dimensional, while clipping and binning allow the relevant empirical quantities to concentrate.
This strategy turns the high-dimensional recovery problem into a collection of smaller, local problems.

We now make the construction precise.
\paragraph{Definition of the estimator.}
It is defined row by row. Fix a target node $i\in[d]$.
We choose a bin size $h>0$, a clipping level $R>0$, and a candidate set size $m\ge k$.
Let $n=\lfloor T/h\rfloor$,
so that we use the discrete time points $0,h,\dots,(n-1)h$.
For each bin $r=0,\dots,n-1$, we record two quantities.
First, we sample the state on a grid defining
\[
U_{j,r}:=X_j(rh),
\qquad
U_r=(U_{1,r},\dots,U_{d,r}),
\]
and the clipped and sampled state of node $j$ as
\[
\psi_R(x):=\min\{x,R\},
\qquad
Z_{j,r}^{(R)}:=\psi_R(U_{j,r}).
\]
Second, we record the binned event indicator for each node $i$ in
\[
Y_{i,r}^{(h)}:=\ind\{N_i((rh,(r+1)h])\ge 1\}.
\]
$Z_{j,r}^{(R)}$ is the clipped and sampled candidate state, measuring the past activity of node $j$, while $Y_{i,r}^{(h)}$ measures whether node $i$ has an event shortly afterwards.

The first stage screens candidate parents using the empirical covariance between these two quantities:
\begin{equation}
\label{eq:screen-stat}
\widehat F_{ij}^{(h,R)}
:=
\frac1n\sum_{r=0}^{n-1} Z_{j,r}^{(R)}Y_{i,r}^{(h)}
-
\Bigl(\frac1n\sum_{r=0}^{n-1}Z_{j,r}^{(R)}\Bigr)
\Bigl(\frac1n\sum_{r=0}^{n-1}Y_{i,r}^{(h)}\Bigr).
\end{equation}
For the target node $i$, let $\widehat C_i$ be the set of indices corresponding to the $m$ largest values of $\{\widehat F_{ij}^{(h,R)}:j\in[d]\}$.
The set $\widehat C_i$ is the screening set: it contains the nodes whose clipped and sampled activity is most correlated with future events of node $i$.
\begin{figure}
    \centering
    \includegraphics[width=0.99\linewidth]{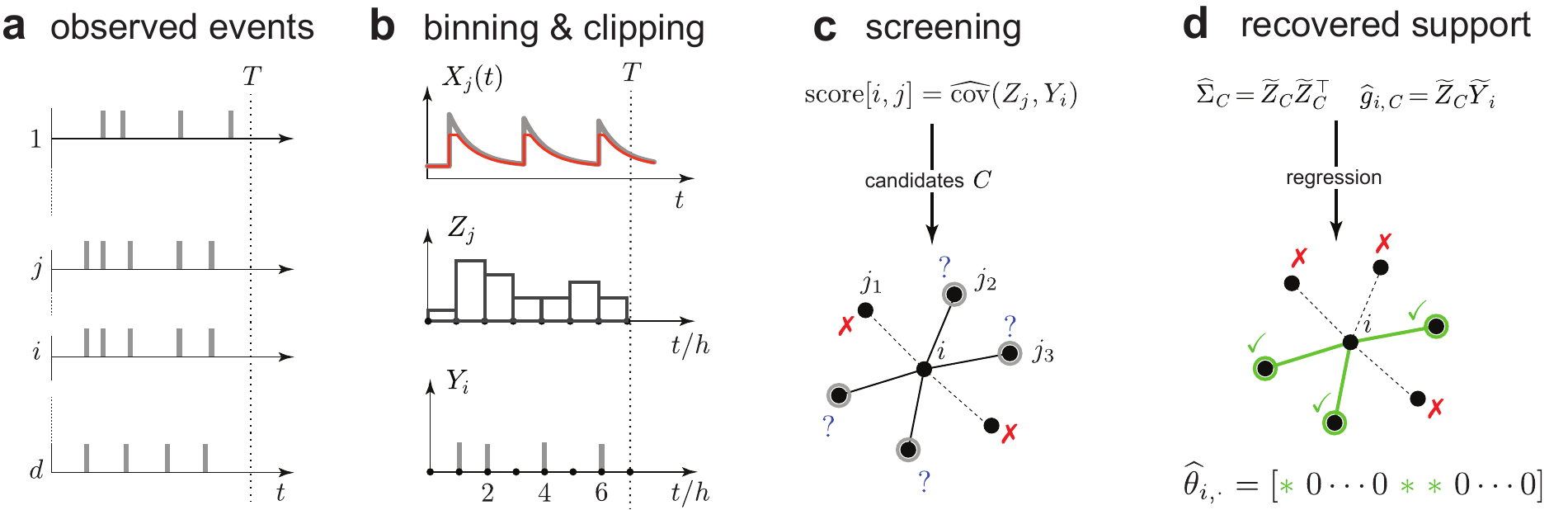}
    \caption{\textbf{{a}} We observe event streams from a Hawkes process with an underlying latent network. To recover the network, our two-stage estimator works node by node. \textbf{{b}} For each node $i$, it converts the observed event times into a binned activity trace $Y_i$. Also, for all candidates $j\in[d]$, it computes the state $X_j(t)$ and its sampled and clipped variant $Z_j$. \textbf{{c}} The first stage screens for nodes that are active shortly before an event at node $i$, retaining the nodes with the highest scores as a candidate set $C$ of possible parents (grey). \textbf{{d}} The second stage centers $Y_i$ and $Z_j$, performs a local regression on this reduced candidate set, and thresholds the fitted coefficients to recover the correct support (green).}
    \label{fig:estimator_overview}
\end{figure}
The second stage refines this candidate set by a local least-squares regression.
We first center the clipped and sampled state and the binned event indicator,
\[
\widetilde Z_{j,r}:=Z_{j,r}^{(R)}-\frac1n\sum_{r=0}^{n-1}Z_{j,r}^{(R)},
\qquad
\widetilde Y_{i,r}:=Y_{i,r}^{(h)}-\frac1n\sum_{r=0}^{n-1}Y_{i,r}^{(h)}.
\]
For any candidate set $C\subset[d]$, we write
\[
\widetilde Z_{C,r}:=(\widetilde Z_{j,r})_{j\in C}.
\]
We then define the empirical covariance matrix and the empirical cross-covariance:
\begin{equation}
\label{eq:ols-est}
\widehat\Sigma_C^{(R)}
:=\frac1n\sum_{r=0}^{n-1}\widetilde Z_{C,r}\widetilde Z_{C,r}^{\top},
\qquad
\widehat g_{i,C}^{(h,R)}
:=\frac1n\sum_{r=0}^{n-1}\widetilde Z_{C,r}\widetilde Y_{i,r}.
\end{equation}
On the screened set $\widehat C_i$, if $\widehat\Sigma_{\widehat C_i}^{(R)}$ is invertible, we define the local least-squares coefficient vector by
\begin{equation}
\label{eq:local-ols-coeff}
\widehat y_{i,\widehat C_i}^{(h,R)}
:=
\bigl(\widehat\Sigma_{\widehat C_i}^{(R)}\bigr)^{-1}
\widehat g_{i,\widehat C_i}^{(h,R)}.
\end{equation}
The estimated parent set of node $i$ is obtained by thresholding the local least-squares coefficients. We choose the threshold $\tau_{\alpha,h}:=\alpha w_- h/2$ and define
\begin{equation}
    \label{eq:threshold}
    \widehat S_i
:=
\{j\in \widehat C_i:\widehat y_{ij}^{(h,R)}\ge \tau_{\alpha,h}\}.
\end{equation}

Now we can state the exact support recovery by bounded blocks result.
\begin{theorem}
\label{thm:upper}
Fix $m\ge k$. There exists $\alpha_0>0$ such that for every fixed $0<\alpha\le \alpha_0$ there exist constants $ C_*,c_*>0$ only depending on $(\beta, \mu_\pm, w_\pm, k, m, \alpha)$, and $d_0\geq m$ such that for every $d> d_0$, every $B\in\mathcal B_{d,k}$, and every observation time satisfying
\begin{equation*}
T\ge   C_*\log d,
\end{equation*}
for every row $i$, the empirical covariance matrix in \eqref{eq:ols-est} is invertible, and the bounded-block estimator defined in \eqref{eq:screen-stat}, \eqref{eq:local-ols-coeff} and \eqref{eq:threshold} satisfies
\[
\Pbb\bigl(\widehat S_i=S_i, \ \forall i\bigr)
\ge 1-d^{-c_*}.
\]
\end{theorem}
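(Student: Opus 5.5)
Our plan is to first establish population-level separation, then show uniform concentration of all empirical quantities at the scale $T\gtrsim\log d$, and finish with a union bound over the $d$ rows. Fix $h$ and $R$ as constants depending only on $(\beta,\mu_\pm,w_\pm,k)$, so that $n=\lfloor T/h\rfloor\asymp T$. We define the population analogues $F_{ij}:=\Cov(Z_{j,0}^{(R)},Y_{i,0}^{(h)})$, $\Sigma_C:=\Cov(Z_{C,0}^{(R)})$ and $g_{i,C}:=\Cov(Z_{C,0}^{(R)},Y_{i,0}^{(h)})$ under the stationary law, and set $y_{i,C}:=\Sigma_C^{-1}g_{i,C}$.

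\textbf{Population separation for small $\alpha$.} Conditionally on $\mathcal F_0$, we have $\Pbb(Y_{i,0}=1\mid\mathcal F_0)=1-\E[e^{-\int_0^h\lambda_i}\mid\mathcal F_0]$. Expanding in $\alpha$ and using $\lambda_i(s)=\mu_i+\alpha\sum_j b_{ij}X_j(s-)$ with $X_j(s)\approx e^{-\beta s}X_j(0)+O(\text{new events})$ gives
\[
\E[Y_{i,0}\mid\mathcal F_0]=\mathrm{const}+\alpha\, e^{-\mu_i h}\tfrac{1-e^{-\beta h}}{\beta}\sum_j b_{ij}X_j(0)+O(\alpha^2).
\]
The $O(\alpha^2)$ terms account for two-hop effects and curvature, and their moments are uniformly bounded by the cluster representation, since $\gamma<1$. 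Hence $g_{i,C}=\Sigma_{C,\cdot}\,\alpha c_h b_{i\cdot}+O(\alpha^2)$.

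At $\alpha=0$ the coordinates are independent Poisson shot noises. For these, $\Sigma_C$ is diagonal with entries bounded below by some $\sigma_R^2>0$ (clipping only shrinks the variance by a controlled amount when $R$ is large). Cross-covariances $\Cov(Z_j,Z_l)$ are $O(\alpha)$ for neighbours and decay geometrically in the graph distance, with row sums controlled by $\gamma^{\mathrm{dist}}$. Therefore $\lambda_{\min}(\Sigma_C)\ge\sigma_R^2/2$ for all $|C|\le m$ once $\alpha\le\alpha_0$. From this we obtain $F_{ij}\ge c\alpha$ for $j\in S_i$, and $F_{ij}\le c\alpha/4$ for $j\notin S_i$. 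For the latter, the non-parent covariance is of order $\alpha^2$, or $\alpha$ times a correlation that is itself $O(\alpha)$.

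We also obtain $\|y_{i,C}-\alpha c_h b_{i,C}\|_\infty\le C\alpha^2$ for every $C\supseteq S_i$ with $|C|\le m$. With $c_h\approx h$ for small $h$, the coefficients then exceed $\tau_{\alpha,h}=\alpha w_-h/2$ on $S_i$ and fall below it off $S_i$, after shrinking $\alpha_0$. The sure-screening ranking requires the parents to beat all but $m-k$ non-parents. This follows because the parent scores exceed every non-parent score.

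\textbf{Concentration.} This is the main obstacle, because the summands are dependent through the Hawkes dynamics. We plan to use the Poisson cluster representation. Cascades have geometric-tail sizes with rate $\gamma<1$, and the kernel is exponential, so each cascade has an exponential-tail lifetime. The discretized sequence $(Z_r,Y_{i,r})$ is therefore a bounded functional of a Poisson process of cascades, whose values are bounded by $R$ and $1$ respectively. We will split $[0,T]$ into blocks of length $L\asymp\log$-free constant multiples, dropping cascades that span more than one block. The probability that any cascade relevant to nodes $i$ and $j$ crosses a boundary decays like $e^{-cL}$. Combining this with a Bernstein inequality for independent block sums, or alternatively a Poisson-functional concentration inequality with bounded differences from the bounded clipped variables, should give
\[
\Pbb\bigl(|\widehat F_{ij}-F_{ij}|>\epsilon\bigr)\le 2e^{-c\epsilon^2 T},
\]
and likewise for each entry of $\widehat\Sigma_C$ and $\widehat g_{i,C}$. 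The key point is locality: only cascades that hit $i$ or $j$ matter, and their rate is bounded uniformly in $d$ because $\|\Theta\|_\infty/\beta\le\gamma$.

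\textbf{Conclusion.} We take a union bound over the $d^2$ screening scores, with $\epsilon=c\alpha/4$. For the second stage, we apply the union bound over the at most $d\binom{d}{m}\le d^{m+1}$ possible sets $C$, or we condition on the event $\widehat C_i$ and use entrywise bounds over the $d^2$ pairs $(j,l)$ of covariance entries, which suffices since $m$ is fixed. With $T\ge C_*\log d$ the failure probability is at most $d^{m+3}e^{-c\epsilon^2C_*\log d}\le d^{-c_*}$. On the good event, $\|\widehat\Sigma_C-\Sigma_C\|_{\op}\le m\epsilon$ preserves invertibility. Perturbation of the inverse then keeps $\|\widehat y-y\|_\infty\le C'\epsilon$, so thresholding returns exactly $S_i$ for every $i$.
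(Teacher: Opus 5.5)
\textbf{The gap is in the concentration step}, which you rightly call the crux. Its uniformity in $d$ is what turns $e^{-c\epsilon^2T}$ plus a union bound into $T\gtrsim\log d$. You justify that uniformity by saying cascades have geometric-tail sizes with rate $\gamma$ and exponential-tail lifetimes, and that claim is false. The constant $\gamma$ bounds the \emph{row} sums of $K=\Theta/\beta$, i.e.\ the excitation a node receives. The mean number of children of a type-$v$ event is instead the \emph{column} sum $\sum_iK_{iv}$, and $\mathcal B_{d,k}$ only restricts in-degrees. If one node parents all others, each of its events has $\asymp d\alpha$ expected children. Total progeny then grows like $d\alpha$ and cascade lifetime like $\log(d\alpha)/\beta$.

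Locality does not rescue this: the rate of cascades hitting $i$ or $j$ is indeed uniformly bounded, but such a cascade may be rooted at, or pass through, a hub. What you need is a uniform-in-$d$ exponential moment for the number of type-$J$ events in a cascade ($J=\{i,j\}$ or $\{a,b\}$), integrated against $\mu_v$ over root types. You also need an exponential tail for the age of the cascade's last type-$J$ event. The paper gets the first from the weights $q_v^{(a)}=\sum_{\ell\ge0}a^\ell\sum_{j\in J}(K^\ell)_{jv}$. These satisfy $\sum_iK_{iv}q_i^{(a)}\le q_v^{(a)}/a$ and $\sum_v\mu_vq_v^{(a)}\le C$, so the generation-wise recursion $F_v=\exp\bigl(\eta q_v+\sum_iK_{iv}(F_i-1)\bigr)$ for the weighted-progeny mgf closes by induction (Lemma~\ref{lem:A}). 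The second comes from a Chernoff bound on generation ages (Lemma~\ref{lem:relevant-lifetime-tail}).

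Bounded differences are also not available. Clipping bounds each $Z^{(R)}_{j,r}$ by $R$, but one added cascade shifts $\sum_rZ^{(R)}_{j,r}$ by up to $\sum_e\sum_{r:\,rh\ge s_e}e^{-\beta(rh-s_e)}$ over its type-$j$ events $e$, which is unbounded. The paper instead uses a Poisson mgf bound that needs only $\int g^2e^{\eta g}\,d\nu<\infty$ for the add-one cost (Lemma~\ref{lem:poisson-bd-external}). It applies this to a martingale over independent root blocks that include the infinite past. Your fixed-$L$ blocking alternative hits the same problem in another form. A fraction $\asymp e^{-cL}$ of blocks see crossings, and the memory of $X_j$ couples adjacent blocks even without crossings. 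The discarded part is therefore a random perturbation of the average that needs its own concentration bound, again resting on uniform type-$J$ tail control.

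\textbf{Your population analysis is a genuinely different and workable route.} Fixing $h,R$ and expanding the conditional bin probability in $\alpha$ avoids the paper's scaling $R\asymp\alpha^{-1}$, $h\asymp\alpha^2$. That scaling is forced only because the clipping bias $C_{\rm clip}/R$ and binning error $C_{\rm bin}Rh^2$ do not vanish with $\alpha$ (Lemmas~\ref{lem:clip-bias} and~\ref{lem:one-bin-expansion}). Three points need repair, though.

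First, the bounds $\Cov(\psi_R(X_j),X_\ell)=O(\alpha)$ and $\Cov(\psi_R(X_j),\psi_R(X_\ell))=O(\alpha)$ for $j\neq\ell$ at fixed $R$ do not follow from the Lyapunov equation, which only covers unclipped $X$. One way to prove them is to split clusters into three classes: those with type-$j$ but no type-$\ell$ events, those with type-$\ell$ but no type-$j$ events, and those with both. These are independent Poisson families, the last of rate $O(\alpha)$ uniformly in $d$, and $\psi_R$ is $1$-Lipschitz.

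Second, $\norm{y_{i,C}-\alpha c_hb_{i,C}}_\infty\le C\alpha^2$ is false at fixed $R$. The score $g_{i,C}$ involves $\Cov(Z^{(R)}_C,X_\ell)$, and $\Cov(Z^{(R)}_\ell,X_\ell)-\Var(Z^{(R)}_\ell)=\Cov(Z^{(R)}_\ell,(X_\ell-R)_+)$ does not vanish as $\alpha\to0$. What holds is $y_{i\ell,C}=\alpha c_hb_{i\ell}\Cov(Z^{(R)}_\ell,X_\ell)/\Var(Z^{(R)}_\ell)+O(\alpha^2)$ on $S_i$, and $O(\alpha^2)$ off it. The ratio is at least $1$, since both are nondecreasing functions of $X_\ell$. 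So thresholding at $\tau_{\alpha,h}$ still works once $h$ is small enough that $c_h>h/2$.

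Third, $\Pbb(Y_{i,0}=1\mid\mathcal F_0)=1-\E[e^{-\int_0^h\lambda_i}\mid\mathcal F_0]$ is not an identity under self- or cross-feedback. Compute it from the cluster representation given $\mathcal F_0$; this yields the same first-order term.
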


\begin{proof}[Sketch of proof] The proof has three ingredients; the full details are in Appendix~\ref{app:upper_bound}. First, for fixed $(h,R)$, the theoretical screening scores

\[
F^{(h,R)}_{ij}:=\Cov\bigl(Z^{(R)}_{j,0},Y^{(h)}_{i,0}\bigr)
\]
admit the first-order expansion
\[
F^{(h,R)}_{ij}=hH^{(R)}_{ij}+O(Rh^2),
\qquad
H^{(R)}_{ij}\approx \frac{\mu_j\theta_{ij}}{2\beta},
\]
uniformly over the weak-interaction class. More precisely,
for suitable fixed choices
\(R=R(\alpha)\asymp \alpha^{-1}\) and
\(h=h(\alpha)\asymp \alpha^2\),
\[
\min_{j\in S_i}F_{ij}^{(h,R)}
-
\max_{j\notin S_i}F_{ij}^{(h,R)}
\ge c_{\rm scr}\,\alpha w_- h ,
\]
uniformly over \(\mathcal G_{d,k}\); see Proposition~\ref{prop:one-bin-gap}. Thus, every true parent has a strictly
larger theoretical screening score than every non-parent.

Second, the bounded block statistics concentrate uniformly. Since
\(Z_{j,r}^{(R)}\le R\) and \(Y_{i,r}^{(h)}\in\{0,1\}\), the Poisson cluster
representation gives a Bernstein-type inequality for the dependent block
averages. Consequently, with probability at least \(1-d^{-c}\), simultaneously
for all rows, all coordinates, and all candidate sets \(C\) with \(|C|\le m\),
\[
\max_{i,j}|\widehat F_{ij}^{(h,R)}-F_{ij}^{(h,R)}|
+
\max_{i,C}\|\widehat g_{i,C}^{(h,R)}-g_{i,C}^{(h,R)}\|_\infty
+
\max_C\|\widehat\Sigma_C^{(R)}-\Sigma_C^{(R)}\|_{\op}
\le C\delta_T,
\]
where
\[
\delta_T:=
\sqrt{\frac{(h^{-1}+R^2h+R^2/h)\log d}{n}},
\qquad n=\lfloor T/h\rfloor ;
\]
see Lemma~\ref{lem:block-conc-consequences}. If \(C\delta_T\le c_{\rm scr}\alpha w_- h/2\), the empirical screening scores
preserve the screening gap, and hence \(S_i\subseteq \widehat C_i\) for every
row \(i\); see Proposition~\ref{prop:sure-screening}.

Third, on this same high-probability event, the local least-squares step is
stable; see Lemma~\ref{lem:Gram-nondeg}. The theoretical Gram matrices on all sets \(C\supseteq S_i\),
\(|C|\le m\), have eigenvalues bounded away from zero, and their theoretical
regression coefficients satisfy the separation
\[
\min_{j\in S_i} y^{(h,R)}_{ij,C}\ge \frac34\alpha w_- h,
\qquad
\max_{j\in C\setminus S_i}|y^{(h,R)}_{ij,C}|
\le \frac14\alpha w_- h ;
\]
see Proposition~\ref{prop:population-beta}. The preceding concentration bounds and a standard matrix-inversion perturbation
argument imply
\[
\|\widehat y^{(h,R)}_{i,\widehat C_i}
      -y^{(h,R)}_{i,\widehat C_i}\|_\infty
\le C'\delta_T .
\]
Taking \(C'\delta_T\le \alpha w_- h/8\), thresholding at
\(\tau_{\alpha,h}=\alpha w_- h/2\) includes every parent and excludes every
non-parent; see Proposition~\ref{prop:ols-error}. Since \(h(\alpha)\) and \(R(\alpha)\) are fixed once \(\alpha\) is
fixed, the required small-deviation conditions are implied by
\(T\ge  C\log d\). This proves the theorem.
\end{proof}

\section{Information-theoretic lower bound}\label{sec:lower-bound}

Theorem~\ref{thm:upper} shows that there exists a procedure that achieves exact network recovery once the observation time $T$ is of order $\log d$. We now check whether this is the intrinsic scaling of the problem, or whether it comes from how we constructed the procedure. In this section, we prove that the logarithmic scaling of the observation time in $d$ is optimal for the problem by establishing an information-theoretic lower bound. It holds uniformly over all estimators based on the observed process, regardless of their construction or computational cost.
Rather than treating the full model class of Section~\ref{sec:setting} in complete generality, we restrict our attention to a simple subclass. This is sufficient for proving optimality: since Theorem~\ref{thm:upper} already gives an upper bound of order $\log d$ over the full class, it remains only to find a subclass with matching scaling in $d$. Any such lower bound then applies \textit{a fortiori} to the full class.

The subclass we consider consists of interaction matrices with all rows equal to zero except for one target row $i_\star$. Fix $i_\star\in[d]$ and background intensities $\bar\mu,\bar\mu_\star\in[\mu_-,\mu_+]$. The only unknown part of the interaction matrix is the support of size $k$ of the $i_\star$-th row. We denote the set of admissible supports by
\[
\mathcal S_{d,k}(i_\star)
:=
\{S\subset[d]\setminus\{i_\star\}: |S|=k\}.
\]
Thus, each $S\in\mathcal S_{d,k}(i_\star)$ specifies which $k$ nodes can influence node $i_\star$. For a given support $S$, we define the corresponding interaction matrix $\Theta^{(S)} = (\theta^{(S)}_{ij})$ by
\[
\theta^{(S)}_{i_\star j}
=
\begin{cases}
\theta_-, & j\in S,\\
0, & j\notin S,
\end{cases}
\qquad
\theta^{(S)}_{ij}=0
\quad\text{for } i\neq i_\star.
\]
Thus, in this subclass, exact support recovery amounts to identifying the unknown set $S$ from the observation $Y_T=(X(0),N_{[0,T]})$. 

We now place a uniform prior on $S$. The initial uncertainty about the correct support is then
$
\log |\mathcal S_{d,k}(i_\star)|
=
\log{\binom{d-1}{k}}$ nats (using natural logarithmic convention). Therefore, an estimator can succeed only if the observation $Y_T$ carries a comparable amount of information about $S$. This information is measured by the mutual information $I(S;Y_T)$. Fano's inequality turns this intuition into a lower bound on the error probability: if $I(S;Y_T)$ is much smaller than $\log |\mathcal S_{d,k}(i_\star)|$, then many supports remain statistically indistinguishable, and every estimator must fail with non-negligible probability. For more details, see Appendix~\ref{app:it-intro}.

Since $\bar\mu,\bar\mu_\star\in[\mu_-,\mu_+]$ and every active edge has the minimum allowed signal $\thetalo=\alpha w_-$, every $\Theta^{(S)}$, with $S\in\mathcal S_{d,k}(i_\star)$ belongs to $\mathcal G_{d,k}$. Thus, any impossibility result proved for this subclass applies \emph{a fortiori} to the full class $\mathcal G_{d,k}$. This is made precise by the following theorem, whose complete proof is given in Appendix~\ref{app:lower-bound}.

\begin{theorem}\label{thm:lower-bound}
For fixed $k\ge1$, there exist constants $c>0$ and
$\varepsilon\in(0,1)$, depending only on
$(\beta,\bar\mu,\bar\mu_\star,\thetalo,k)$, such that for all sufficiently
large $d$, if $T\le c\log d$, then
\[
\inf_{\widehat S}
\sup_{S\in\mathcal S_{d,k}(i_\star)}
\Pbb_S(\widehat S\neq S)
\ge \varepsilon .
\]
Consequently,
\[
\inf_{\widehat G}
\sup_{(\mu,\Theta)\in\mathcal G_{d,k}}
\Pbb_{\mu,\Theta}
\bigl(\widehat G(Y_T)\neq \supp(\Theta)\bigr)
\ge \varepsilon.
\]
In particular, along any sequence $T_d=o(\log d)$, the worst-case
exact-recovery error over the full class is bounded away from zero. 
Combined with Theorem~\ref{thm:upper}, this identifies the optimal dimension dependence as logarithmic for fixed $k$ and fixed coupling level.
\end{theorem}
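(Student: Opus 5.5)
We will apply Fano's inequality to the uniform prior on $\mathcal S_{d,k}(i_\star)$. Since the family is finite and $|\mathcal S_{d,k}(i_\star)|=\binom{d-1}{k}$, Fano gives, for every estimator $\widehat S$,
\[
\sup_{S}\Pbb_S(\widehat S\neq S)\ \ge\ 1-\frac{I(S;Y_T)+\log 2}{\log\binom{d-1}{k}} .
\]
Because $\log\binom{d-1}{k}\ge k\log\frac{d-1}{k}\ge \tfrac{k}{2}\log d$ for large $d$, it suffices to show $I(S;Y_T)\le C_0(1+T)$ with $C_0$ depending only on $(\beta,\bar\mu,\bar\mu_\star,\thetalo,k)$. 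Taking $T\le c\log d$ with $c$ small then gives an error of at least $1-\tfrac{2C_0 c}{k}-o(1)\ge\varepsilon$. We will bound the mutual information by its standard upper bound $I(S;Y_T)\le \max_{S}\KL(\Pbb_S\,\|\,\Pbb_0)$. Here $\Pbb_0$ is the law of $Y_T$ under the reference model with $\Theta=0$, namely independent homogeneous Poisson processes with rates $\bar\mu$ for $i\neq i_\star$ and $\bar\mu_\star$ for $i_\star$. The reference $\Pbb_0$ is not in the class, but that does not matter for this bound.

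\textbf{Structure of the subclass.} Under $\Theta^{(S)}$, every node $j\neq i_\star$ is a homogeneous Poisson process of rate $\bar\mu$, exactly as under $\Pbb_0$. Hence $X(0)$ restricted to those coordinates has the same stationary law under both models. Only node $i_\star$ differs, with intensity $\lambda^S(t)=\bar\mu_\star+\thetalo\sum_{j\in S}X_j(t-)$. No feedback into the other nodes occurs because their rows are zero. The coordinate $X_{i_\star}(0)$ does not affect any intensity, since column $i_\star$ of $\Theta^{(S)}$ is zero. Its law may differ between the two models, however. By the chain rule for KL, the initial-state contribution is at most $\KL$ between the stationary laws of $X_{i_\star}(0)$ under the two models given the other coordinates. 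If that is awkward to handle, we will instead give the reference model the same conditional law of $X_{i_\star}(0)$, which makes this term vanish. This is legitimate because the mutual-information bound holds for any reference distribution.

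\textbf{KL via Jacod's formula.} Conditionally on $X(0)$, Jacod's Girsanov formula gives the log-likelihood ratio, and its expectation is
\[
\KL(\Pbb_S\|\Pbb_0\mid X(0))=\E_S\!\int_0^T\Bigl(\lambda^S\log\frac{\lambda^S}{\bar\mu_\star}-\lambda^S+\bar\mu_\star\Bigr)dt\le \E_S\!\int_0^T\frac{(\lambda^S-\bar\mu_\star)^2}{\bar\mu_\star}\,dt,
\]
using $x\log(x/a)-x+a\le (x-a)^2/a$. By stationarity, the right-hand side equals
\[
\frac{T\thetalo^2}{\bar\mu_\star}\,\E\Bigl(\sum_{j\in S}X_j(0)\Bigr)^2\le \frac{T\thetalo^2k^2}{\bar\mu_\star}\Bigl(\frac{\bar\mu^2}{\beta^2}+\frac{\bar\mu}{2\beta}\Bigr),
\]
using the shot-noise moments of a Poisson process, $\E X_j=\bar\mu/\beta$ and $\Var X_j=\bar\mu/(2\beta)$. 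This gives $I(S;Y_T)\le C_0(1+T)$, as required.

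\textbf{Main obstacle and conclusion.} The main obstacle is rigor rather than computation. Jacod's formula must be applied with the predictable intensity, and we must check that the local martingale part has zero mean, which follows from square integrability of $X_j$. The initial-state term must also be handled cleanly, and we plan to neutralize it through the choice of reference law as described above. The second display of the theorem follows because each $\Theta^{(S)}$ with background $(\bar\mu,\dots,\bar\mu_\star,\dots)$ lies in $\mathcal G_{d,k}$, once $\gamma=k\thetahi/\beta<1$ is assumed. A full-support estimator $\widehat G$ induces an estimator $\widehat S$ by reading off row $i_\star$, so its worst-case error is at least that of $\widehat S$. The $o(\log d)$ statement is immediate.
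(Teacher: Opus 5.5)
\textbf{Overall.} Your architecture matches the paper's: Fano with the uniform prior on $\mathcal S_{d,k}(i_\star)$, comparison with the interaction-free reference $P_0^T$, the KL chain rule, and Jacod's formula with $a\log(a/b)-a+b\le(a-b)^2/b$. The dynamic part is fine; your $k^2(\bar\mu^2/\beta^2+\bar\mu/(2\beta))$ is a harmless overbound of $C_{\mathrm{path}}$.

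\textbf{The gap.} The initial-state term is never actually bounded. You correctly reduce it to the expected KL between the conditional laws of $X_{i_\star}(0)$ given $X_{-i_\star}(0)$. However, you never show this quantity is finite. Your fallback of giving the reference ``the same conditional law of $X_{i_\star}(0)$'' is not legitimate. The bound $I(S;Y_T)\le \frac1M\sum_S\KL(P_S^T\|Q)$, with $M=\binom{d-1}{k}$, requires a single $Q$ that does not depend on $S$. Under $\pi_S$, $X_{i_\star}(0)$ is driven by $\sum_{j\in S}X_j$, so its conditional law given $X_{-i_\star}(0)$ is a different function of $X_{-i_\star}(0)$ for each $S$. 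No single $Q$ matches all of them. Taking the mixture $\frac1M\sum_S\pi_S$ as the initial law of $Q$ leaves exactly $I(S;X(0))$, which is the quantity you were trying to avoid. So your claim $I(S;Y_T)\le C_0(1+T)$ with $C_0$ free of $d$ is unproved. If this term were infinite, or grew with $d$, Fano would give nothing.

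\textbf{What the paper does.} Independence from $S$ and $d$ is the easy part: permute coordinates and use product additivity to reduce $\KL(\pi_S\|\pi_0)$ to a fixed $(k+1)$-dimensional KL. Finiteness is the real content of Proposition~\ref{prop:app-initKL-k}. The paper writes $X_{i_\star}(0)=U+W$, with $U\sim\pi^{\mathrm P}_{\bar\mu_\star}$ independent of the source history and of $W$, and uses convexity of KL to reduce to translates. It proves $\KL(\mathrm{Law}(U+w)\|\mathrm{Law}(U))\le C(1+w\log(1+w))$ from fine properties of the shot-noise density:
\begin{itemize}
\item $q(x)\propto x^{\bar\mu_\star/\beta-1}$ near $0$;
\item a tail lower bound of order $e^{-Cx\log x}$;
\item $\int q|\log q|<\infty$.
\end{itemize}
Exponential moments of $W$ then finish the argument. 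A naive data-processing bound through the pre-$0$ path is useless here, because that path KL grows linearly with the horizon.

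\textbf{A lighter repair.} Use the chain rule for mutual information, $I(S;Y_T)=I(S;X(0))+I(S;N_{[0,T]}\mid X(0))$; your Jacod computation, with reference kernel $K_0^T$, bounds the second term. Since $X_{-i_\star}(0)$ has the same law for every $S$, you get $I(S;X(0))\le I(S,\mathcal H,W;X_{i_\star}(0))\le h(U+W)-h(U)\le 1+\log\E[X_{i_\star}(0)]-h(U)$. Here $\mathcal H$ is the full source history and $h$ is differential entropy. The bound $h(U)>-\infty$ follows from $U\stackrel{d}{=}e^{-\beta\tau}(1+U')$, with $\tau\sim\mathrm{Exp}(\bar\mu_\star)$ independent of a copy $U'$. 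Either way, some such argument must actually be supplied.
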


\begin{proof}[Sketch of Proof]

Let $S^\star$ be uniformly distributed on
$\mathcal S_{d,k}(i_\star)$, and let $Y_T$ be the stationary observation generated under the interaction matrix $\Theta^{(S^\star)}$ and the fixed background intensities $\bar\mu,\bar\mu_\star$. 
Fano's inequality gives, for any estimator $\widehat S=\widehat S(Y_T)$ of $S^\star$,
\[
\sup_{S\in\mathcal S_{d,k}(i_\star)}
\Pbb_S(\widehat S\neq S)
\ge
1-\frac{I(S^\star;Y_T)+\log 2}
{\log |\mathcal S_{d,k}(i_\star)|}.
\]
Since $
\log |\mathcal S_{d,k}(i_\star)|
=
\log{\binom{d-1}{k}}$, it remains to bound the mutual information $I(S^\star;Y_T)$. 
Let $P_S^T$ be the law of the observation $Y_T$ when the parent set is $S$. Let $P_0^T$ be the reference law in which all interaction coefficients are set to zero, while the background intensities $\bar\mu,\bar\mu_\star$ are kept unchanged. The mutual information $I(S^\star;Y_T)$ measures how much information the observation contains about the unknown support $S^\star$. We do not bound this mutual information directly. Instead, we compare every possible law $P_S^T$ with the same law $P_0^T$. Since $P_0^T$ does not depend on $S$, Lemma~\ref{lem:app-mi-baseline} gives
\begin{equation}
I(S^\star;Y_T)
\le
\frac{1}{|\mathcal S_{d,k}(i_\star)|}
\sum_{S\in\mathcal S_{d,k}(i_\star)}
\KL(P_S^T\|P_0^T).
\label{eq:mi-reference-bound}
\end{equation}
It is therefore enough to obtain a bound on $\KL(P_S^T\|P_0^T)$ that holds uniformly over $S\in\mathcal S_{d,k}(i_\star)$.
For each $S$, the KL divergence splits into two terms: one coming from the stationary law of $X(0)$, and one coming from the conditional law of the counting process $N_{[0,T]}$ given $X(0)$; see Lemma~\ref{lem:app-chain}. The stationary-law contribution is bounded by a constant $C_{\rm init}$, independent of both $d$ and $T$. The reason is that, after a permutation of the nodes, the stationary laws differ only on the fixed $(k+1)$-dimensional block formed by node $i_\star$ and its $k$ parents; see Proposition~\ref{prop:app-initKL-k}.

For the conditional-law contribution, the laws $P_S^T$ and $P_0^T$ differ only through the intensity of the counting process $N_{i_\star}$:
\[
\lambda_{i_\star}^{(S)}(t)-\lambda_{i_\star}^{(0)}(t)
=
\thetalo\sum_{j\in S} X_j(t-).
\]
Under $P_0^T$, all interaction coefficients are zero, so the intensity of $N_{i_\star}$ is equal to its background intensity:  $\lambda_{i_\star}^{(0)}(t)\equiv \bar\mu_\star$ for all $t$.
Jacod's point-process Girsanov formula \cite{jacod1975multivariate} therefore yields
\[
\KL(P_S^T\|P_0^T)
\le
C_{\mathrm{init}}
+
\frac{\thetalo^2}{\bar\mu_\star}
\int_0^T
\E_S\Bigl[\Bigl(\sum_{j\in S}X_j(t-)\Bigr)^2\Bigr]\,dt;
\]
see Proposition~\ref{prop:app-dynamic-KL}. Now the advantage of the subclass becomes clear: for $j\neq i_\star$, the processes $X_j$ are independent stationary Poisson shot-noise processes with background intensity $\bar\mu$. Hence,
\[
\E_S\Bigl[\Bigl(\sum_{j\in S}X_j(t-)\Bigr)^2\Bigr]
=
\frac{k^2\bar\mu^2}{\beta^2}
+
\frac{k\bar\mu}{2\beta}
=: C_{\mathrm{path}},
\]
uniformly in $S$, $d$, and $t$; see Lemma~\ref{lem:app-moment-k}. Therefore,
\[
\KL(P_S^T\|P_0^T)
\le
C_{\mathrm{init}}
+
\frac{\thetalo^2}{\bar\mu_\star} C_{\mathrm{path}}\,T.
\]
Substituting this bound into Fano's inequality gives
\begin{equation}
\label{eq:fano-thm}
\inf_{\widehat S}
\sup_{S\in\mathcal S_{d,k}(i_\star)}
\Pbb_S(\widehat S\neq S)
\ge
1-
\frac{
C_{\mathrm{init}}
+
\frac{\thetalo^2}{\bar\mu_\star}C_{\mathrm{path}}T
+
\log 2
}{
\log \binom{d-1}{k}
};
\end{equation}
see Proposition~\ref{prop:app-finite-fano}. Since $k$ is fixed,
$\log\binom{d-1}{k}\asymp\log d$, so the right-hand side is bounded away from zero whenever $T\le c\log d$ for $c>0$ small enough.

Finally, any estimator of the full support matrix over $\mathcal G_{d,k}$ induces, on this subclass, an estimator of the unknown support $S$ by restricting the estimated support to the $i_\star$-th row. Hence the lower bound obtained on the subclass also applies to exact support recovery over the full class $\mathcal G_{d,k}$. 
\end{proof}

Based on Theorem~\ref{thm:lower-bound}, we can make a few remarks:
\begin{itemize}
\item The term $C_{\mathrm{init}}$ is the bound on the information contained in stationary initial state $X(0)$. It does \emph{not} depend on $d$ and does not grow with $T$. Therefore, in the dimension-asymptotic regime with all other parameters fixed, it contributes only an additive $O(1)$ term. The part of the KL bound that grows with the observation time is
\[
\frac{\thetalo^2}{\bar\mu_\star}\,C_{\mathrm{path}}\,T,
\qquad
C_{\mathrm{path}}
=
\frac{k^2\bar\mu^2}{\beta^2}
+
\frac{k\bar\mu}{2\beta}.
\]
Thus the lower-bound argument identifies the necessary information scale
\[
T
\gtrsim
\frac{\bar\mu_\star}{\thetalo^2\,C_{\mathrm{path}}}
\log \binom{d-1}{k},
\]
up to dimension-independent additive constants depending on
$(\beta,\bar\mu,\bar\mu_\star,\thetalo,k)$.
\item The logarithmic dependence on \(d\) is the combinatorial part of the required observation time. The constant prefactor reflects the amount of information generated per unit time. For $d$ sufficiently large, it can be read informally as follows: a larger minimum edge strength $\thetalo$ makes recovery easier, since distinguishability scales quadratically in $\thetalo$; a larger background intensity at node $i_\star$ makes recovery harder, because it increases the background activity against which parental influence must be detected; a larger background activity $\bar\mu$ of the parent process makes recovery easier, as it increases the aggregated signal power $C_{\mathrm{path}}$ of the parents; and a smaller decay rate $\beta$ makes recovery easier, since longer memory strengthens the cumulative effect of parent events.
\end{itemize}

\section{Conclusion and future work}
\label{sec: conclusion}
We proved that the observation time required for exact network recovery in Hawkes processes scales logarithmically with the network size $d$.
This puts the success of empirical work on a theoretical basis: a large network size does not make exact recovery prohibitive, because $d$ only enters through the combinatorial cost of identifying a sparse parent set. 
To the best of our knowledge, this is the first result of its kind.
We also identify the key factors that make our proof possible.
The known exponential kernel provides a Markov state representation, the intensity depends linearly on this state, and stationarity removes burn-in effects while providing uniform distributional control over the observation window.
Weak interactions play an equally important role. 
Indirect excitation cascades may create correlations between non-adjacent nodes and thereby mimic direct causal influence. 
The weak-interaction condition ensures that such effects remain smaller than the contribution of a true parent.
This separation is the central statistical mechanism behind both the screening gap and the stability of the local regression step.

A natural direction for future work is to sharpen the dependence on other parameters, especially the coupling strength.
This is particularly important for applications where the available observation window is usually constrained. 
Other extensions include signed or inhibitory interactions, unknown or coordinate-dependent decay rates, marked Hawkes processes, and spatially structured networks. 
Signed interactions are challenging because cancellations may destroy separation arguments and the intensity may approach zero. 
Marked and spatial models would bring the theory closer to applications.

One speculative but promising application area is earthquake dynamics. Earthquake catalogs are naturally event data, and classical aftershock models already use Hawkes processes to describe triggering. 
It would be interesting to develop a multiscale (marked and spatial) extension of the theory and connect it to emerging network-based forecasting approaches.

\bibliography{references}
\bibliographystyle{plain}



\appendix

\section{Upper bound proof details}
\label{app:upper_bound}
Appendix~\ref{app:upper_bound} contains the proof of the upper bound. 
We organize it into five sections.
We begin with an intuitive description and a short overview before turning to the technical details. The key starting point is that the estimator does not work directly with the raw, unbounded process $X(t)$. Instead, it clips the covariates, that is, the states, individual variables, or specific combinations of them, at a finite level $R$, and then discretizes time into small bins of size $h$. The first step is therefore to show that this preprocessing preserves the signal: the covariance used to distinguish true parents of events (trigger nodes) from non-parents must survive clipping and binning, and the resulting regression problem must remain well posed. Once this foundation is established, the next challenge is statistical: the empirical averages computed from the data must be shown to concentrate around their true values. This is nontrivial because the binned observations are not independent, as events in a Hawkes process can trigger later events. The Poisson cluster representation, however, isolates the independent cluster roots, making it possible to control dependence across bins and derive explicit exponential tail bounds. Finally, we combine the theoretical separation between parents and non-parents with these concentration bounds to show that the two-stage estimator succeeds with high probability. The contents of the sections are outlined in the following list:
\begin{itemize}
    \item \textbf{Section A1} contains some lemmas resulting from the choice of the framework and the assumptions made; these results are not specific to the estimator but rather reflect fundamental properties of stable multivariate Hawkes processes, and they serve as the basic toolkit invoked repeatedly throughout the subsequent sections
    \item \textbf{Section A2} introduces clipping and binning and analyzes the effect on covariances and, consequently, on screening gaps;
    \item \textbf{Section A3} shows that the least-squares regression on clipped covariates, run on any candidate set containing the true parents, produces slope estimates that are meaningful and that the associated Gram matrix remains well-conditioned uniformly;
    \item \textbf{Section A4} derives concentration inequalities for the empirical averages of the bounded observables used by the estimator, by decomposing the Hawkes process into clusters with independent roots, bounding the impact of each cluster on the statistics of interest, and combining these bounds into explicit tail probabilities that decay exponentially in the sample size;
    \item \textbf{Section A5} combines the theoretical screening gap and regression separation with the empirical concentration to show that, with probability tending to one, the two-stage estimator first includes all true parents in the screened superset and then recovers them exactly by thresholding the bounded least-squares coefficients.
\end{itemize}

\subsection{Preliminary Lemmas}
\label{appendix:1}

First of all we prove the uniform bounds for any fixed order moments of $X_j(0)$: 

\begin{proposition}
\label{prop:moments}
For every $P\in\mathbb N$ there exists $\alpha_P>0$, depending only on $(P,\beta,\mu_+,k,w_+)$, such that for all $0<\alpha\le \alpha_P$,
\[
\sup_{d\ge 1}\sup_{B\in\mathcal B_{d,k}}\sup_{1\le j\le d}\E[X_j(0)^p]<\infty,
\qquad p=1,\dots,P.
\]
\end{proposition}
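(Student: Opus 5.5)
We bound the moments through the Poisson cluster representation and reduce the $p$-th moment of $X_j(0)$ to moments of cluster sizes in a multitype branching process with sub-critical offspring. By stationarity and the cluster representation \cite{hawkes1974cluster}, immigrants of type $l$ arrive as a homogeneous Poisson process of rate $\mu_l\le\mu_+$ on $(-\infty,0]$. Each immigrant generates a cluster in which an event of type $m$ produces type-$i$ offspring as a Poisson process with intensity $\theta_{im}e^{-\beta u}$, $u>0$ after its birth. Writing
\[
X_j(0)=\sum_{\text{events } s\le 0 \text{ of type } j} e^{\beta s}
\]
as a Poisson functional of the immigrant process, we get
\[
\log \E e^{\xi X_j(0)}=\sum_l \mu_l\int_0^\infty \bigl(\E e^{\xi W^{(l)}_j(u)}-1\bigr)\,du ,
\]
where $W^{(l)}_j(u)$ is the discounted type-$j$ content at time $u$ of a cluster rooted at type $l$. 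Moments of $X_j(0)$ are then polynomials (via cumulants) in the quantities $\sum_l\mu_l\int_0^\infty \E[W^{(l)}_j(u)^q]\,du$ for $q\le P$, so it suffices to bound these uniformly in $d$, $B$ and $j$.

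The uniformity in $d$ comes from reversing the direction of the sum. For fixed $j$ we sum over the ancestral paths ending at $j$, which follow rows of $\Theta$: the type-$j$ descendants trace back along chains $j\leftarrow m_1\leftarrow m_2\leftarrow\cdots$, and each node has at most $k$ parents, each with weight at most $\thetahi$. For the first moment this gives
\[
\E X_j(0)=\beta^{-1}\bigl((I-\Theta/\beta)^{-1}\mu\bigr)_j\le \frac{\mu_+}{\beta(1-\gamma)},
\]
because $\|\Theta/\beta\|_\infty\le\gamma<1$ is a row-sum bound and that is exactly the direction needed. For higher moments I would use the factorial-moment (Campbell) formula for Poisson cluster processes: $\E[X_j(0)^p]$ is a sum over set partitions of $\{1,\dots,p\}$ of products of integrals of the $q$-th factorial moment densities of the marks. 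Alternatively, one can work with the Markov generator: applying Itô/Dynkin to the polynomial $x\mapsto x^p$ of the vector $X$ gives a closed linear system for the mixed moments of degree $\le p$ of $(X_{j_1},\dots,X_{j_p})$.

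I would carry out the generator route concretely. Stationarity of
\[
\E\Bigl[\prod_{r=1}^q X_{j_r}\Bigr]
\]
together with $dX_j=-\beta X_j\,dt+dN_j$ yields
\[
q\beta\,\E\Bigl[\prod_r X_{j_r}\Bigr]=\sum_{r}\E\Bigl[\lambda_{j_r}\prod_{s\ne r}X_{j_s}\Bigr]+\text{(terms of degree}<q\text{ from coinciding jumps)}.
\]
Substituting $\lambda_{j_r}=\mu_{j_r}+\sum_l\theta_{j_rl}X_l$ and setting $M_q:=\sup_{j_1,\dots,j_q}\E\prod X_{j_r}$ gives the bound
\[
q\beta M_q\le q\,k\thetahi M_q+C_q(M_{q-1},\dots,M_1,\mu_+).
\]
The row-sparsity again controls the sum $\sum_l\theta_{j_rl}$. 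Hence $M_q\le C_q/(q\beta(1-\gamma))$, which holds by induction on $q$ as soon as $\gamma<1$. The condition on $\alpha$ (namely $\alpha\le\alpha_P$) can be taken as $k\alpha w_+<\beta$, possibly with slack for the lower-order terms.

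The main obstacle is justifying the stationary moment identity: one must know a priori that the moments are finite, so that the martingale terms vanish and the supremum $M_q$ is not $\infty$. I would handle this by first establishing finiteness for each fixed $d$, using the cluster representation with exponential moments of subcritical cluster sizes. Alternatively, one can truncate ($x\mapsto \min(x,L)^p$ or a stopping-time localization) and pass to the limit by monotone convergence, with all the resulting bounds independent of $d$.
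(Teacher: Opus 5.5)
Your proof is correct. It shares the paper's backbone: the stationary generator identity, the row-sum bound $\sum_l\theta_{jl}\le k\thetahi=\beta\gamma$, and induction on the degree. It differs in two ways that work in your favour.

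\textbf{The closure.} The paper applies Dynkin's formula only to truncated pure powers $\min\{x_j,M\}^p$. It bounds the jump increment crudely by $c_p(1+x^{p-1})$ and the cross terms $\E[X_\ell X_j^{p-1}]$ by an unweighted Young inequality. This inflates the coefficient of $M_p$ to $C_p\beta\gamma$, which is exactly why the paper must take $\alpha\le\alpha_P$ with $p-C_p\gamma>0$. By closing the system over all degree-$q$ monomials, you keep that coefficient at exactly $q\beta\gamma$. Your recursion $q\beta(1-\gamma)M_q\le C_q(M_1,\dots,M_{q-1},\mu_+)$ therefore works for every $\alpha$ with $\gamma<1$, with no $P$-dependent smallness.

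\textbf{A priori finiteness.} The paper's truncation and monotone convergence only give the identity $\beta p\,\E[X_j^p]=\E[\lambda_j((X_j+1)^p-X_j^p)]$ in $[0,\infty]$. It then subtracts $C_p\beta\gamma M_p$ from both sides without knowing $M_p<\infty$. Your order of steps supplies the missing piece:
\begin{enumerate}
\item prove finiteness at fixed $(d,B)$ via the subcritical cluster representation, so the maximum over the finitely many index tuples is finite;
\item run the recursion there, getting bounds that depend only on $(q,\beta,\mu_+,\gamma)$;
\item only then take the supremum over $d$ and $B$.
\end{enumerate}

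\textbf{One thing to drop.} Your fallback of truncating the test function to $\min(x,L)^p$ does not give a priori finiteness on its own. As in the paper, the intensity $\lambda_j$ still contains the untruncated $X_\ell$, so the truncated inequality does not close. Keep the cluster-representation route.

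In short, the paper's version is lighter in bookkeeping, while yours is sharper and logically complete.
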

\begin{proof}
Fix $p\ge 1$ and $j$. For $M\ge 1$, let $\phi_M(x)=\min\{x, M\}$ and set $
F_{p,M}(x):=\phi_M(x_j)^p$. Define the approximation $F_{p,M,\epsilon}$ using a smooth mollifier $\phi_{M,\epsilon}$ and apply Dynkin's formula in stationarity with the coherent infinitesimal generator of the process.
Then letting the smoothing parameter tend to zero leads to
\begin{equation*}
\label{eq:moment-recursion-trunc}
\beta p\,\E\!\Big[X_j \min\{X_j, M\}^{p-1}\mathbf 1_{\{X_j<M\}}\Big]
=
\E\Big[\lambda_j\bigl(X(0)\bigr)\bigl(\min\{X_j+1, M\}^p-\min\{X_j, M\}^p\bigl)\Big].
\end{equation*}
Both sides are nonnegative and increase to their untruncated counterparts as $M\uparrow\infty$, so monotone convergence gives
\begin{equation*}
\label{eq:moment-recursion-base}
\beta p\,\E[X_j^p]
=
\E\Big[\lambda_j\bigl(X(0)\bigr)\big((X_j+1)^p-X_j^p\big)\Big].
\end{equation*}
Now, since $(x+1)^p-x^p\le c_p(1+x^{p-1})$ for $x\ge 0$ and
$ \lambda_j\bigl(X(0)\bigr)\le \mu_+ + \theta_+\sum_{\ell\in S_j,\,|S_j|\le k}X_\ell(0)$, we obtain from the previous equation
\begin{equation}
\label{eq:step intermediate}
\beta p\,\E[X_j^p]
\le c_p\mu_+\bigl(1+\E[X_j^{p-1}]\bigr)
+ c_p\theta_+\sum_{\ell\in S_j,\,|S_j|\le k}\E\bigl[X_\ell(1+X_j^{p-1})\bigr].
\end{equation}
Using Young's inequality for the mixed term gives
\[
\E\bigl[X_\ell(1+X_j^{p-1})\bigr]
\le \E[X_\ell] + \E[X_\ell^p]+\E[X_j^p] \le M_1 + M_p + M_p,
\]
where, with the convention $M_0:=1$, $M_p:=\sup_{d,B,j}\E[X_j(0)^p]$.
Therefore taking the supremum over $(d,B,j)$ in \eqref{eq:step intermediate}, after enlarging the constant if necessary, gives:
\[
\beta p\,M_p
\le C_p\mu_+\bigl(1+M_{p-1}\bigr)+C_p\beta\gamma\bigl(M_1+M_p\bigr),
\]
Equivalently, the recursion equation
\[
\beta (p-C_p\gamma)\,M_p
\le C_p\bigl(\mu_+ + \beta \gamma M_1 + \mu_+ M_{p-1}\bigr).
\]
For $p=1$ this yields $\beta(1-2C_p\gamma)M_1\le 2C_p\mu_+$ and for $\alpha$ small enough this implies $M_1<\infty$. Choosing then $\alpha$ small enough in order to have $\beta (p-C_p\gamma)>0$ for $p=1,\dots,P$ proves the claim by induction.
\end{proof}
Before establishing bounds on the covariance of $X_j(0)$, we prove the following lemma regarding the stationary mean.
\begin{lemma}
\label{lem:stability_stationarymean_momen}
Under the stability condition, let $m:=\E[X(0)]$ and $\bar\lambda:=\E[\lambda(0)]$. Then 
\[
    m=(\beta I-\Theta)^{-1}\mu, \qquad \bar \lambda =\beta m.
\]
Moreover, uniformly in $d$ and $B\in\mathcal B_{d,k}$
    \begin{equation}
        \label{eq:mean-bounds}
        \frac{\mu_-}{\beta}
        \le
        \min_j m_j
        \le
        \max_j m_j
        \le
        \frac{\mu_+}{\beta(1-\gamma)},
    \end{equation}
    \begin{equation}
    \label{eq:mean-first-order}
        \max_j\left|m_j-\frac{\mu_j}{\beta}\right|
        \le
        \frac{\gamma\mu_+}{\beta(1-\gamma)}.
    \end{equation}

\end{lemma}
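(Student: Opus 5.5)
The plan is to derive the mean identity from the SDE in stationarity and then obtain the bounds from a Neumann-series expansion of $(\beta I-\Theta)^{-1}$.

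\textbf{Mean identity.} Take expectations in $dX_j(t)=-\beta X_j(t)\,dt+dN_j(t)$. Since $N_j$ has compensator $\int\lambda_j(t)\,dt$, stationarity together with finiteness of first moments (Proposition~\ref{prop:moments} with $P=1$, or directly from the stability condition) gives
\[
0=-\beta m_j+\bar\lambda_j ,
\qquad\text{so}\qquad
\bar\lambda=\beta m .
\]
Taking expectations in the intensity, and using that $X_j(t-)=X_j(t)$ almost surely at fixed $t$, gives $\bar\lambda=\mu+\Theta m$. Combining the two yields $(\beta I-\Theta)m=\mu$.

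\textbf{Invertibility.} Since $\|\Theta\|_\infty/\beta\le\gamma<1$, the matrix $\beta I-\Theta=\beta(I-\Theta/\beta)$ is invertible. Its inverse is given by the Neumann series
\[
(\beta I-\Theta)^{-1}=\beta^{-1}\sum_{n\ge0}(\Theta/\beta)^n ,
\]
and every term in this series is entrywise nonnegative. Hence $m=(\beta I-\Theta)^{-1}\mu$.

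\textbf{Bounds.} Nonnegativity of the series gives $m\ge \mu/\beta\ge \mu_-/\beta$ entrywise, which is the lower bound. For the upper bound, take sup norms:
\[
\|m\|_\infty\le \beta^{-1}\sum_n\gamma^n\mu_+=\frac{\mu_+}{\beta(1-\gamma)} .
\]
For \eqref{eq:mean-first-order}, write
\[
m-\frac{\mu}{\beta}=\beta^{-1}\sum_{n\ge1}(\Theta/\beta)^n\mu .
\]
This is nonnegative, and its sup norm is at most
\[
\beta^{-1}\mu_+\sum_{n\ge1}\gamma^n=\frac{\gamma\mu_+}{\beta(1-\gamma)} .
\]
All of these bounds depend only on $\gamma$, $\beta$ and $\mu_\pm$, so they hold uniformly in $d$ and $B$.

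\textbf{Main obstacle.} The only delicate step is justifying the stationary expectation identity: one needs the martingale $N_j(t)-\int_0^t\lambda_j\,ds$ to have zero mean and $\E[X_j(t)]$ to be constant and finite. Finiteness of $m$ can be obtained first by a monotone approximation argument, or from the cluster representation with the stability condition $\gamma<1$. Once $m$ is finite, the rest is linear algebra.
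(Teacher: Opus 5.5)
Your proof is correct, and the constants you obtain match the statement exactly. The mean identity is derived as in the paper: take expectations in $dX_j=-\beta X_j\,dt+dN_j$ under stationarity, and take expectations in the intensity.

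For the bounds you take a slightly different route. You expand $(\beta I-\Theta)^{-1}$ as a Neumann series. The paper never inverts anything and works directly with the fixed-point equation $\beta m=\mu+\Theta m$:
\begin{itemize}
\item it reads off $\beta m_j\ge\mu_j$ from $\Theta m\ge 0$;
\item it bounds $\beta m_j\le \mu_+ +\gamma\beta\max_\ell m_\ell$, which gives $\beta(1-\gamma)\max_j m_j\le\mu_+$;
\item it then uses $|m_j-\mu_j/\beta|=\beta^{-1}\sum_\ell\theta_{j\ell}m_\ell\le\gamma\max_\ell m_\ell$ to obtain \eqref{eq:mean-first-order}.
\end{itemize}
The paper's argument is shorter. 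It tacitly uses that $\beta I-\Theta$ is invertible, that $\max_j m_j<\infty$, and that $m\ge 0$; all of these are immediate for finite $d$ and $X\ge 0$. Your series argument makes invertibility and entrywise nonnegativity of the inverse explicit from $\|\Theta/\beta\|_\infty\le\gamma<1$. It also yields all three bounds from a single expansion.

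One small caveat concerns finiteness of $m$. Citing Proposition~\ref{prop:moments} with $P=1$ only covers sufficiently small $\alpha$, not the bare stability condition under which the lemma is stated. Your alternative via the cluster representation is the right justification there: expected cluster sizes are finite since $\|K\|_\infty\le\gamma<1$, so $\bar\lambda_j<\infty$ and hence $m_j=\bar\lambda_j/\beta<\infty$.
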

\begin{proof}
Stationarity of the process leads to $0=-\beta m+\bar\lambda$ and $\bar\lambda=\mu+\Theta m$, therefore $(\beta I-\Theta)m=\mu>0$ and $\bar\lambda=\beta m$. The lower bound in \eqref{eq:mean-bounds} follows from $\beta m_j\ge \mu_j\ge \mu_-$, while for the upper bound:
\[
\beta m_j=\mu_j+\sum_\ell\theta_{j\ell}m_\ell
\le \mu_+ + \gamma \beta\max_\ell m_\ell,
\]
especially $\beta(1-\gamma)\max_j m_j\le \mu_+$. Finally, exploiting the stability condition in the inequality,
\[
    \left|m_j-\frac{\mu_j}{\beta}\right|=\frac{1}{\beta}\sum_\ell \theta_{j\ell}m_\ell \leq \gamma \max_\ell m_\ell ,
\]
whence \eqref{eq:mean-first-order} by the upper bound in \eqref{eq:mean-bounds}.
\end{proof}

We are now in the position of proving bounds on the covariance of $X_j(0)$. 

\begin{proposition}
\label{prop:cov-perturb}
There exists $\alpha_{\rm cov}>0$, depending only on $(\beta,\mu_+,w_+,k)$, such that whenever $0<\alpha\le \alpha_{\rm cov}$,
\begin{equation}
\label{eq:cov-bounds-main}
\sup_{j\neq \ell}|\Cov(X_j(0),X_\ell(0))|\le C_{\rm off}\alpha,
\qquad
\sup_j \left|\Var(X_j(0))-\frac{\mu_j}{2\beta}\right|\le C_{\rm diag}\alpha,
\end{equation}
with constants depending only on $(\beta,\mu_-,\mu_+,w_+,k)$.
\end{proposition}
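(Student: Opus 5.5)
The plan is to derive a closed linear (Lyapunov-type) equation for the stationary covariance matrix $\Sigma:=\Cov(X(0))$ using the generator of the Markov process $X$, exactly as in the proof of Proposition~\ref{prop:moments}, and then to solve it perturbatively in $\alpha$. Let $\Lambda:=\operatorname{diag}(\bar\lambda)$ with $\bar\lambda=\beta m$ from Lemma~\ref{lem:stability_stationarymean_momen}. Apply Dynkin's formula in stationarity to $F(x)=x_jx_\ell$; Proposition~\ref{prop:moments} with $P=2$ provides the integrability. A jump of $N_q$ increases $x_q$ by one, so the generator acting on $x_jx_\ell$ gives $-2\beta x_jx_\ell+\lambda_j x_\ell+\lambda_\ell x_j+\ind\{j=\ell\}\lambda_j$. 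Write $\lambda=\mu+\Theta x$ and take expectations. After subtracting the identity for the means, this yields the matrix equation
\[
2\beta\Sigma=\Theta\Sigma+\Sigma\Theta^\top+\Lambda .
\]
At $\alpha=0$ it reduces to $\Sigma=\Lambda/(2\beta)$ with $\Lambda=\operatorname{diag}(\mu)$, which is the Poisson shot-noise value $\mu_j/(2\beta)$.

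Next I will bound the deviation. Write $\Sigma=\Lambda/(2\beta)+E$. Then
\[
2\beta E-\Theta E-E\Theta^\top=\tfrac{1}{2\beta}\bigl(\Theta\Lambda+\Lambda\Theta^\top\bigr).
\]
I will work in the entrywise max norm $\|E\|_{\max}:=\max_{j,\ell}|E_{j\ell}|$. Since each row of $\Theta$ has at most $k$ entries bounded by $\thetahi$, we have $\|\Theta E\|_{\max}\le k\thetahi\|E\|_{\max}=\beta\gamma\|E\|_{\max}$, and similarly $\|E\Theta^\top\|_{\max}\le\beta\gamma\|E\|_{\max}$. For the right-hand side, Lemma~\ref{lem:stability_stationarymean_momen} gives $\|\Lambda\|_{\max}\le\mu_+/(1-\gamma)$, and hence
\[
\Bigl\|\tfrac{1}{2\beta}(\Theta\Lambda+\Lambda\Theta^\top)\Bigr\|_{\max}\le \frac{\thetahi\mu_+}{\beta(1-\gamma)} .
\]
Combining,
\[
2\beta(1-\gamma)\|E\|_{\max}\le \frac{\alpha w_+\mu_+}{\beta(1-\gamma)} ,
\]
which is $O(\alpha)$ once $\gamma\le 1/2$. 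This fixes $\alpha_{\rm cov}$ through $k\alpha w_+/\beta\le1/2$.

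The two bounds then follow. For $j\neq\ell$ the main term is diagonal, so $\Cov(X_j,X_\ell)=E_{j\ell}$ and the off-diagonal bound is immediate. For the diagonal, $\Var(X_j)-\mu_j/(2\beta)=E_{jj}+(\bar\lambda_j-\mu_j)/(2\beta)$. The second term equals $\tfrac12(m_j-\mu_j/\beta)$, which by \eqref{eq:mean-first-order} is at most $\gamma\mu_+/(2\beta(1-\gamma))=O(\alpha)$.

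The main obstacle is justifying the Dynkin step for the unbounded quadratic function $x_jx_\ell$. The cleanest fix is to reuse the truncation and mollification argument from Proposition~\ref{prop:moments}, together with dominated convergence using the uniform second and third moments from that proposition (so $\alpha_{\rm cov}\le\alpha_3$). A second point is that the max-norm contraction also needs $\|E\|_{\max}<\infty$ a priori. This also follows from Proposition~\ref{prop:moments} via Cauchy--Schwarz, uniformly in $d$.
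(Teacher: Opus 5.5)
Your argument is correct and is essentially the paper's: both derive the stationary Lyapunov equation $2\beta\Sigma=\Theta\Sigma+\Sigma\Theta^\top+\operatorname{diag}(\bar\lambda)$, close it with the row-sum bound $\sum_r\theta_{jr}\le\beta\gamma$, and finish the diagonal with \eqref{eq:mean-first-order}. The only difference is bookkeeping: you subtract $\operatorname{diag}(\bar\lambda)/(2\beta)$ and contract the remainder in the entrywise max norm, whereas the paper first bounds $\sup_j\Sigma_{jj}$ via Cauchy--Schwarz and then reads off both deviations from it (your route also removes a factor $k$ from the off-diagonal constant).
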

\begin{proof}
Set $m=\E[X(0)]$, $\bar\lambda=\E[\lambda(0)]$ and $Y_t=X_t-m$. Then
\[
dY_t=(\Theta-\beta I)Y_t\,dt+dM_t,
\qquad
M_t:=N_t-\int_0^t\lambda_s\,ds.
\]
Denote $\Sigma:=\Cov(X(0),X(0))$, $ D:=\sup_j \Sigma_{jj}$ and $S:=\sup_{j\neq\ell}|\Sigma_{j\ell}|$: applying the semimartingale product rule to $Y_tY_t^\top$ and taking expectations in stationarity yield the (entrywise-)Lyapunov equation
\begin{equation}
\label{eq:lyapunov-entrywise}
2\beta\Sigma_{j\ell}
=
\sum_r \theta_{jr}\Sigma_{r\ell}
+
\sum_r \theta_{\ell r}\Sigma_{jr}
+
\bar\lambda_j\ind_{\{j=\ell\}}.
\end{equation}
From the Lyapunov equation for $j=\ell$ and using $|\Sigma_{jr}|\le \sqrt{\Sigma_{jj}\Sigma_{rr}}\le D$ and $\sum_r\theta_{jr}\le \gamma \beta$, we get
\[
2\beta\Sigma_{jj}\le 2\gamma \beta D+\bar\lambda_j.
\] 
Taking the supremum over $j$ and using the previous Lemma
\begin{equation}
\label{eq:D-bound}
2\beta(1-\gamma)D\le \max_j \bar\lambda_j \le \frac{\mu_+}{1-\gamma} \qquad \implies \qquad D\le \frac{\mu_+}{2\beta(1-\gamma)^2}=:C_D.
\end{equation}
In a similar manner, if $j\neq \ell$, \eqref{eq:lyapunov-entrywise} yields $2\beta|\Sigma_{j\ell}|\le 2\gamma\beta D$, whence $S\le \gamma D \le (C_D k w_+/\beta)\,\alpha =: C_{\rm off}\alpha$ (which is the stated off-diagonal bound).\\
From the diagonal equation, subtracting $\mu_j/2\beta$ in both sides and taking the absolute value lead to
\[
\left|\Sigma_{jj}-\frac{\mu_j}{2\beta}\right|
\le \frac{1}{\beta}\sum_r \theta_{jr}|\Sigma_{jr}| + \frac{|\bar\lambda_j-\mu_j|}{2\beta}.
\]
By \eqref{eq:D-bound}, the first term in right side is at most $\gamma D=O(\alpha)$, while the second term is $O(\alpha)$ by the previous Lemma because $\bar\lambda_j=\beta m_j$. This proves the diagonal bound in \eqref{eq:cov-bounds-main}. 
\end{proof}

We conclude this section by showing that the screening scores effectively distinguish between the nodes $j\in S_i$ and $j\notin S_i$.
\begin{proposition}
\label{prop:Gexpand}
Define
\[
G_{ij}:=\Cov(X_j(0),\lambda_i(0)).
\]
Assume $0<\alpha\le \alpha_{\rm cov}$. Then there exists  $C_G>0$, depending only on $(\beta,\mu_-,\mu_+,w_+,k)$, such that:
\[
\left|G_{ij}-\frac{\mu_j}{2\beta}\theta_{ij}\right|\le C_G\alpha^2 \quad \text{if } \,j\in S_i \qquad \text{and} \qquad |G_{ij}|\le C_G\alpha^2 \quad \text{if } \,j\notin S_i
\]
Consequently there exists $\alpha_{scr}\in(0,\alpha_{\rm cov}]$ such that for every $0<\alpha\le \alpha_{scr}$,
\begin{equation}
\label{eq:screen-gap-G}
\min_{j\in S_i} G_{ij}-\max_{j\notin S_i}G_{ij}
\ge \Delta_{scr}(\alpha):=\frac{\mu_-w_-}{4\beta}\alpha
\end{equation}
for every row $i$ with $S_i\neq\varnothing$, while $G_{ij}=0$ for all $j$ whenever $S_i=\varnothing$.
\end{proposition}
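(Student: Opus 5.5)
Since $\lambda_i(0)=\mu_i+\sum_r\theta_{ir}X_r(0-)$ and $X(0-)=X(0)$ almost surely in stationarity (there is no atom at a fixed time), $G$ is linear in the covariance matrix $\Sigma=\Cov(X(0),X(0))$:
\[
G_{ij}=\sum_{r}\theta_{ir}\Sigma_{rj}=\theta_{ij}\Sigma_{jj}+\sum_{r\in S_i\setminus\{j\}}\theta_{ir}\Sigma_{rj}.
\]
The plan is to plug in the diagonal and off-diagonal bounds of Proposition~\ref{prop:cov-perturb}; no further stochastic analysis is needed.

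\textbf{Case $j\in S_i$.} Write $\Sigma_{jj}=\mu_j/(2\beta)+O(C_{\rm diag}\alpha)$. The first term then equals $\theta_{ij}\mu_j/(2\beta)$ up to an error of at most $\thetahi C_{\rm diag}\alpha=w_+C_{\rm diag}\alpha^2$. The remaining sum has at most $k-1$ terms, each bounded by $\thetahi C_{\rm off}\alpha=w_+C_{\rm off}\alpha^2$. This gives $C_G:=w_+(C_{\rm diag}+kC_{\rm off})$.

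\textbf{Case $j\notin S_i$.} Here $\theta_{ij}=0$, so only the sum over $r\in S_i$ remains, and $r\neq j$ automatically. Hence $|G_{ij}|\le k w_+C_{\rm off}\alpha^2\le C_G\alpha^2$. If $S_i=\varnothing$, then $\lambda_i(0)=\mu_i$ is deterministic and $G_{ij}=0$ trivially.

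\textbf{Screening gap.} For $j\in S_i$ we have $\theta_{ij}\ge\alpha w_-$ and $\mu_j\ge\mu_-$. Therefore
\[
\min_{j\in S_i}G_{ij}\ge \frac{\mu_-w_-}{2\beta}\alpha-C_G\alpha^2,
\qquad
\max_{j\notin S_i}G_{ij}\le C_G\alpha^2 .
\]
The gap is thus at least $\frac{\mu_-w_-}{2\beta}\alpha-2C_G\alpha^2$, which is at least $\frac{\mu_-w_-}{4\beta}\alpha$ as soon as $\alpha\le \mu_-w_-/(8\beta C_G)$. So we take $\alpha_{\rm scr}:=\min\{\alpha_{\rm cov},\mu_-w_-/(8\beta C_G)\}$. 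This threshold depends only on the stated constants (and also on $w_-$, which enters only through $\alpha_{\rm scr}$, not through $C_G$).

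\textbf{Main obstacle.} The only delicate point is justifying the linear identity for $G$. This requires finite second moments, which hold by Proposition~\ref{prop:moments}, and the a.s.\ equality $X(0-)=X(0)$. Once these are in place, everything is bookkeeping with Proposition~\ref{prop:cov-perturb}.
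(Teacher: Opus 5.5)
Your proposal is correct and follows the paper's proof essentially verbatim. You expand $G_{ij}=\theta_{ij}\Sigma_{jj}+\sum_{r\in S_i\setminus\{j\}}\theta_{ir}\Sigma_{rj}$, insert the diagonal and off-diagonal bounds of Proposition~\ref{prop:cov-perturb} using $\theta_{ir}\le\thetahi=\alpha w_+$, and shrink $\alpha$ so that the $O(\alpha^2)$ errors are absorbed into half of the leading term $\frac{\mu_-w_-}{2\beta}\alpha$. Your explicit constants $C_G=w_+(C_{\rm diag}+kC_{\rm off})$ and $\alpha_{\rm scr}=\min\{\alpha_{\rm cov},\mu_-w_-/(8\beta C_G)\}$, together with your remark on the case $S_i=\varnothing$, simply make explicit what the paper leaves implicit.
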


\begin{proof}
From the definition of $\lambda_i(0)$, if $j\in S_i$, then
\[
G_{ij}=\theta_{ij}\Sigma_{jj}+\sum_{\ell\in S_i\setminus\{j\}}\theta_{i\ell}\Sigma_{j\ell}.
\]
Hence using Proposition~\ref{prop:cov-perturb} and $\theta_{ij}\leq \theta_+$:
\begin{equation}
    \label{eq:minG}
    \left|G_{ij}-\frac{\mu_j}{2\beta}\theta_{ij}\right|
    \le \theta_{ij}\left|\Sigma_{jj}-\frac{\mu_j}{2\beta}\right| + \sum_{\ell\in S_i\setminus\{j\}} \theta_{i\ell}|\Sigma_{j\ell}|
    \le C_G\alpha^2.
\end{equation}
If $j\notin S_i$, then
\begin{equation}
    \label{eq:maxG}
    |G_{ij}| = \left|\sum_{\ell\in S_i}\theta_{i\ell}\Sigma_{j\ell}\right|
    \le k\theta_+ \sup_{j\neq \ell}|\Sigma_{j\ell}|
    \le C_G\alpha^2.
\end{equation}
Finally, using \eqref{eq:minG} and \eqref{eq:maxG}, for $\alpha$ small enough the gap \eqref{eq:screen-gap-G} follows.
\end{proof}

\subsection{Clipping and binning}
\label{appendix:2}
Before we show that the screening signal survives clipping, we need to quantify how much the covariances change when we replace the unbounded covariates $X_j(0)$ by their clipped versions $Z_j^{(R)} = \min\{X_j(0), R\}$. Intuitively, clipping only affects the tails, i.e. the rare events where $X_j(0)$ is very large, but since these events become rarer as $R$ grows, the ``distortion'' introduced should vanish as R$\to \infty$. The following lemma makes this precise, showing that the covariances change by at most $C_{\rm clip}/R$.\\
We fix $h>0$ and $R>0$ and denote $Z_j^{(R)}:=\psi_R(X_j(0))$ and $F_{ij}^{(h,R)}:=\Cov(Z_j^{(R)},Y_{i,0}^{(h)})$.
\begin{lemma}
\label{lem:clip-bias}
There exist a constant $C_{\rm clip}>0$, depending only on $(\beta,\mu_-,\mu_+,w_+,k)$, such that uniformly in $d,B,i,j,\ell$,
\begin{align}
\label{eq:clip-cov1}
\abs{\Cov(Z_j^{(R)},\lambda_i(0)) - \Cov(X_j(0),\lambda_i(0))}
&\le \frac{C_{\rm clip}}{R},\\
\label{eq:clip-cov2}
\abs{\Cov(Z_j^{(R)},Z_\ell^{(R)}) - \Cov(X_j(0),X_\ell(0))}
&\le \frac{C_{\rm clip}}{R}.
\end{align}
\end{lemma}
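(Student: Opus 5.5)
Write $D_j:=X_j(0)-Z_j^{(R)}=(X_j(0)-R)_+\ge 0$. Both bounds come from this truncation remainder together with the uniform moment bounds of Proposition~\ref{prop:moments}. By bilinearity,
\[
\Cov(Z_j^{(R)},\lambda_i(0))-\Cov(X_j(0),\lambda_i(0))=-\Cov(D_j,\lambda_i(0)),
\]
and
\[
\Cov(Z_j^{(R)},Z_\ell^{(R)})-\Cov(X_j(0),X_\ell(0))=-\Cov(D_j,X_\ell(0))-\Cov(X_j(0),D_\ell)+\Cov(D_j,D_\ell).
\]
So it suffices to show that every covariance involving at least one remainder $D$ is $O(1/R)$, uniformly in $(d,B,i,j,\ell)$.

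The key step is a second-moment bound on the remainder. Since $D_j\le X_j(0)\ind\{X_j(0)>R\}\le X_j(0)^2/R$, we get
\[
\E[D_j^2]\le \E\bigl[X_j(0)^2\ind\{X_j(0)>R\}\bigr]\le \frac{\E[X_j(0)^3]}{R}.
\]
Similarly $\E[D_j]\le \E[X_j(0)^2]/R$. By Proposition~\ref{prop:moments} with $P=4$, these moments are bounded uniformly over $d$, $B\in\mathcal B_{d,k}$ and $j$, provided $\alpha\le\alpha_4$. The constant $C_{\rm clip}$ may therefore implicitly require $\alpha$ small, as Proposition~\ref{prop:cov-perturb} does; $\alpha_4$ depends on $(\beta,\mu_+,k,w_+)$, consistent with the stated dependence.

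The cleanest way to reach $1/R$ rather than $1/\sqrt R$ is to avoid Cauchy--Schwarz on the centered quantities and bound each covariance by two expectations:
\[
|\Cov(D_j,W)|\le \E[D_jW]+\E[D_j]\,\E[W]
\]
for nonnegative $W$. We have
\[
\E[D_jW]\le \E\bigl[X_j\ind\{X_j>R\}W\bigr]\le R^{-1}\E[X_j^2W]\le R^{-1}\bigl(\E[X_j^4]\,\E[W^2]\bigr)^{1/2},
\]
and $\E[D_j]\,\E[W]\le R^{-1}\E[X_j^2]\,\E[W]$. For $W=X_\ell(0)$ or $W=D_\ell\le X_\ell(0)$, the second moment of $W$ is uniformly bounded by Proposition~\ref{prop:moments}.

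For $W=\lambda_i(0)=\mu_i+\sum_{r\in S_i}\theta_{ir}X_r(0)$, Minkowski's inequality gives
\[
\E[\lambda_i(0)^2]^{1/2}\le \mu_++k\thetahi\sup_r\E[X_r^2]^{1/2},
\]
which is uniform because $|S_i|\le k$.

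Collecting the terms yields both \eqref{eq:clip-cov1} and \eqref{eq:clip-cov2} with $C_{\rm clip}$ a combination of $\mu_+$, $k\thetahi=\gamma\beta<\beta$, and the uniform fourth moments. Since $k\thetahi<\beta$, the constant is free of $\alpha$ and of $w_-$.

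The only real obstacle is ensuring uniformity over $d$ and $B$, and that is exactly what Proposition~\ref{prop:moments} supplies (order up to four suffices). One should note explicitly that the lemma inherits its smallness condition on $\alpha$. Row sparsity keeps the $\lambda_i$ bound independent of $d$.
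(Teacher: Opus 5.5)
Your proof is correct and follows essentially the paper's argument: both bound $|\Cov(D_j,W)|\le \E[D_jW]+\E[D_j]\E[W]$ for the remainder $D_j=(X_j(0)-R)_+$, gain the factor $1/R$ from $x\ind\{x>R\}\le x^2/R$, and close with Cauchy--Schwarz and the uniform fourth moments of Proposition~\ref{prop:moments}. The paper applies Cauchy--Schwarz after the sharper bound $\E[D_j^2]\le \E[X_j^4]/R^2$, so its route does not lose a $\sqrt R$ either; your explicit three-term expansion of the doubly clipped covariance is a slightly more careful version of the paper's remark that \eqref{eq:clip-cov2} is ``identical'' to \eqref{eq:clip-cov1}.
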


\begin{proof}
Writing $\delta_j^{(R)}:=X_j(0)-Z_j^{(R)}=(X_j(0)-R)_+$,
\[
\abs{\Cov(Z_j^{(R)},\lambda_i)-\Cov(X_j,\lambda_i)}=\abs{\Cov(\delta_j^{(R)},\lambda_i)}
\le
\E[\delta_j^{(R)}\lambda_i]+
\E[\delta_j^{(R)}]\E[\lambda_i].
\]
Because $\delta_j^{(R)}\le X_j\ind_{\{X_j>R\}}$, Cauchy--Schwarz and Markov give
\[
\E[\delta_j^{(R)}]
\le
\sqrt{\E[X_j^2]\Pbb(X_j>R)}
\le
\frac{\sqrt{\E[X_j^2]\E[X_j^2]}}{R}
\le \frac{C_{\rm clip}}{R}.
\]
Moreover, for every $x\ge 0$, $(x-R)_+^2\le x^2\ind\{x>R\}\le \frac{x^4}{R^2}$. Proposition~\ref{prop:moments} with $P=4$ yields
\begin{equation}
\label{eq:delta-second-moment-main}
\E[(\delta_j^{(R)})^2]\le \frac{\E[X_j^4]}{R^2}\le \frac{C_{\rm clip}}{R^2}.
\end{equation}
Since $\lambda_i\le \mu_+ + \alpha w_+\sum_{\ell\in S_i}X_\ell$ and $|S_i|\le k$, Proposition~\ref{prop:moments} with $P=4$ also implies $\sup_{i,d,B}\E[\lambda_i^2]<\infty$. Therefore
\[
\E[\delta_j^{(R)}\lambda_i]
\le
\sqrt{\E[(\delta_j^{(R)})^2]\E[\lambda_i^2]}
\le
\frac{C_{\rm clip}}{R}.
\]
This proves \eqref{eq:clip-cov1}. The proof of \eqref{eq:clip-cov2} is identical after replacing $\lambda_i$ by $X_\ell$.
\end{proof}
\begin{proposition}
\label{prop:clipped-screen-gap}
Define
\[
H_{ij}^{(R)}:=\Cov\bigl(Z_j^{(R)},\lambda_i(0)\bigr).
\]
There exist $\alpha_1>0$, $R_1:(0,\alpha_1]\to[1,\infty)$, and constants $c_H,C_H>0$, depending only on $(\beta,\mu_-,\mu_+,w_-,w_+,k)$, such that for every fixed $0<\alpha\le \alpha_1$ and every $R\ge R_1(\alpha)$,
\begin{align}
\label{eq:H-parent}
\min_{j\in S_i}H_{ij}^{(R)} &\ge c_H\alpha,\\
\label{eq:H-nonparent}
\max_{j\notin S_i}H_{ij}^{(R)} &\le C_H\alpha^2 + \frac{C_H}{R},
\end{align}
for every row $i$, and therefore
\begin{equation}
\label{eq:H-gap}
\min_{j\in S_i}H_{ij}^{(R)} - \max_{j\notin S_i}H_{ij}^{(R)}
\ge \frac{\mu_-w_-}{8\beta}\alpha.
\end{equation}
And $H_{ij}^{(R)}=0$ for all $j$ whenever $S_i=\varnothing$.
\end{proposition}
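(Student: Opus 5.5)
The approach is to transfer the unclipped estimates of Proposition~\ref{prop:Gexpand} to the clipped scores through Lemma~\ref{lem:clip-bias}. By \eqref{eq:clip-cov1}, uniformly in $d,B,i,j$,
\[
\bigl|H_{ij}^{(R)}-G_{ij}\bigr|\le \frac{C_{\rm clip}}{R}.
\]
We take $\alpha_1\le \min\{\alpha_{\rm scr},\alpha_4\}$, where $\alpha_4$ is the threshold of Proposition~\ref{prop:moments} with $P=4$. This ensures that both Lemma~\ref{lem:clip-bias} and Proposition~\ref{prop:Gexpand} apply, with constants depending only on $(\beta,\mu_\pm,w_\pm,k)$.

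\emph{Parent bound.} For $j\in S_i$ we have $\theta_{ij}\ge \alpha w_-$ and $\mu_j\ge\mu_-$. Proposition~\ref{prop:Gexpand} gives
\[
G_{ij}\ge \frac{\mu_- w_-}{2\beta}\alpha - C_G\alpha^2 .
\]
Hence
\[
H^{(R)}_{ij}\ge \frac{\mu_-w_-}{2\beta}\alpha - C_G\alpha^2-\frac{C_{\rm clip}}{R}.
\]
Shrinking $\alpha_1$ so that $C_G\alpha\le \mu_-w_-/(8\beta)$, and choosing
\[
R_1(\alpha):=\max\Bigl\{1,\ \frac{8\beta C_{\rm clip}}{\mu_-w_-\,\alpha}\Bigr\},
\]
gives $H_{ij}^{(R)}\ge \frac{\mu_-w_-}{4\beta}\alpha=:c_H\alpha$ for all $R\ge R_1(\alpha)$. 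This is \eqref{eq:H-parent}.

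\emph{Non-parent bound.} For $j\notin S_i$, Proposition~\ref{prop:Gexpand} gives $|G_{ij}|\le C_G\alpha^2$. Therefore
\[
H^{(R)}_{ij}\le C_G\alpha^2+\frac{C_{\rm clip}}{R},
\]
which is \eqref{eq:H-nonparent} with $C_H:=\max\{C_G,C_{\rm clip}\}$.

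\emph{Gap.} Subtracting the two bounds, for $R\ge R_1(\alpha)$,
\[
\min_{j\in S_i}H_{ij}^{(R)}-\max_{j\notin S_i}H_{ij}^{(R)} \ge \frac{\mu_-w_-}{2\beta}\alpha - 2C_G\alpha^2 - \frac{2C_{\rm clip}}{R}.
\]
With the choices above, $2C_G\alpha^2\le \frac{\mu_-w_-}{4\beta}\alpha$ and $\frac{2C_{\rm clip}}{R}\le \frac{\mu_-w_-}{4\beta}\alpha$. That leaves zero margin, so I adjust the constants: require $C_G\alpha\le \mu_-w_-/(16\beta)$ and enlarge $R_1(\alpha)$ by a factor of $2$, making the $1/R$ term at most $\frac{\mu_-w_-}{16\beta}\alpha$ per occurrence. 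The gap is then at least
\[
\Bigl(\tfrac12-\tfrac18-\tfrac18\Bigr)\frac{\mu_-w_-}{\beta}\alpha=\frac{\mu_-w_-}{4\beta}\alpha\ \ge\ \frac{\mu_-w_-}{8\beta}\alpha,
\]
which proves \eqref{eq:H-gap}. The parent bound still holds with $c_H=\mu_-w_-/(4\beta)$ under these stricter choices. All constants depend only on $(\beta,\mu_\pm,w_\pm,k)$, and $R_1(\alpha)\asymp\alpha^{-1}$, consistent with the choice of $R$ in the proof sketch of Theorem~\ref{thm:upper}.

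\emph{Empty parent set.} If $S_i=\varnothing$, then $\lambda_i(0)=\mu_i$ is deterministic, so $H_{ij}^{(R)}=\Cov(Z_j^{(R)},\mu_i)=0$ for every $j$.

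\emph{Main obstacle.} The only delicate point is uniformity: $C_{\rm clip}$ must not depend on $d$, $B$, $i$ or $j$. This relies on the uniform fourth-moment bound of Proposition~\ref{prop:moments} and the uniform second moment of $\lambda_i$, both of which are already contained in the proof of Lemma~\ref{lem:clip-bias}. Beyond that, the argument is bookkeeping of constants, with the single requirement that the clipping error $1/R$ be made $O(\alpha)$ through the choice $R\gtrsim\alpha^{-1}$.
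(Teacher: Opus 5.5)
Your proof is correct and matches the paper's argument: you combine the uniform clipping bound $|H_{ij}^{(R)}-G_{ij}|\le C_{\rm clip}/R$ from Lemma~\ref{lem:clip-bias} with Proposition~\ref{prop:Gexpand}, and you choose $\alpha_1$ with $C_G\alpha\le \mu_-w_-/(16\beta)$ and $R_1(\alpha)$ with $C_{\rm clip}/R\le \mu_-w_-\alpha/(16\beta)$, which are exactly the paper's choices. Your explicit restriction of $\alpha$ to the $P=4$ threshold of Proposition~\ref{prop:moments}, which the paper leaves implicit in Lemma~\ref{lem:clip-bias}, is harmless extra care; just note that the paper already uses the name $\alpha_4$ for the threshold in Proposition~\ref{prop:population-beta}.
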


\begin{proof}
By Lemma~\ref{lem:clip-bias} and Proposition~\ref{prop:Gexpand}, $H_{ij}^{(R)} = G_{ij} + O(R^{-1})$ uniformly in $(i,j,d,B)$. Using again the bounds in Proposition~\ref{prop:Gexpand}, choose $\alpha_1$  such that $C_G\alpha\le \mu_-w_-/(16\beta)$ for $0<\alpha\le \alpha_1$, and then choose $R_1(\alpha)$ such that $C_{\rm clip}/R\le \mu_-w_-\alpha/(16\beta)$ for all $R\ge R_1(\alpha)$. Using these bounds proves \eqref{eq:H-parent}--\eqref{eq:H-gap}.
\end{proof}

Since we are using discrete bin indicators $Y_{i,0}^{(h)}$, which record whether at least one event of type-$i$ occurred in the first bin, the next Lemmas show that the covariance between $Z_j^{(R)}$ and $Y_{i,0}^{(h)}$ is well approximated by $h$ times the covariance between $Z_j^{(R)}$ and $\lambda_i(0)$, up to an error of order $Rh^2$. Intuitively this is true because, over a short bin of width $h$, the probability of observing at least one event is approximately $h\lambda_i(0)$, so the indicator $Y_{i,0}^{(h)}$ behaves roughly like $h\lambda_i(0)$. We first present the following auxiliary Lemma about the approximation on one bin.
\begin{lemma}
\label{lem:one-bin-expansion}
Then there exists a constant $C_{\rm bin}>0$, depending only on $(\beta,\mu_+,w_+,k)$, such that uniformly in $d,B,i,j$,
\begin{equation}
\label{eq:one-bin-expansion}
\abs{F_{ij}^{(h,R)} - h H_{ij}^{(R)}}
\le C_{\rm bin} R h^2
\qquad (0<h\le 1,\ R\ge 1).
\end{equation}
\end{lemma}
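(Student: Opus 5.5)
We need to bound $|\Cov(Z,Y) - h\Cov(Z,\lambda_i(0))|$ with $Z:=Z_j^{(R)}\in[0,R]$ and $Y:=Y^{(h)}_{i,0}$. The plan is to write $Y=\Lambda_h - E_h$, where
\[
\Lambda_h:=\int_0^h\lambda_i(s)\,ds ,\qquad E_h:=\Lambda_h-Y .
\]
We then treat the two differences $\Lambda_h - h\lambda_i(0)$ and $Y-\Lambda_h$ separately. Since $Z\le R$, every covariance term of the form $\Cov(Z,W)$ is bounded by $2R\,\E|W|$. This reduces the problem to showing $\E|W|\le C h^2$ for the two error terms, uniformly in $d,B,i,j$.

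\textbf{Step 1: the event-indicator error.} The counting process gives $Y=\ind\{N_i((0,h])\ge1\}\le N_i((0,h])$ and $N_i - Y=(N_i-1)_+$. Writing $N_i=M_h+\Lambda_h$ with $M$ the compensated martingale, $\Cov(Z,M_h)=0$ because $Z$ is $\mathcal F_0$-measurable and $\E[M_h\mid\mathcal F_0]=0$. Therefore
\[
\Cov(Z,Y)=\Cov(Z,\Lambda_h)-\Cov\bigl(Z,(N_i((0,h])-1)_+\bigr).
\]
The last term is bounded by $2R\,\E[(N-1)_+]\le R\,\E[N(N-1)]$. For a point process with intensity $\lambda_i$, the second factorial moment satisfies
\[
\E[N(N-1)]=2\E\int_0^h\!\!\int_0^{t}\lambda_i(s)\lambda_i(t)\,ds\,dt\le h^2\sup_t\E[\lambda_i(t)^2],
\]
using the compensator representation of $\int\!\int dN\,dN$ off the diagonal together with Cauchy--Schwarz. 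Stationarity and Proposition~\ref{prop:moments} (with $P=2$), together with $\lambda_i\le\mu_++\thetahi\sum_{\ell\in S_i}X_\ell$, make $\E[\lambda_i(t)^2]$ uniformly bounded. This gives an $O(Rh^2)$ contribution.

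\textbf{Step 2: the intensity drift.} It remains to bound $\Cov(Z,\Lambda_h-h\lambda_i(0))=\int_0^h\Cov(Z,\lambda_i(s)-\lambda_i(0))\,ds$. From the SDE, for $s\le h$,
\[
X_\ell(s)-X_\ell(0)=-\beta\int_0^s X_\ell(u)\,du+N_\ell((0,s]) ,
\]
so $\lambda_i(s)-\lambda_i(0)=\sum_{\ell\in S_i}\theta_{i\ell}(X_\ell(s)-X_\ell(0))$. Taking expectations,
\[
\E|X_\ell(s)-X_\ell(0)|\le \beta s\,\E X_\ell+\E N_\ell((0,s])=2\beta s\,m_\ell,
\]
which is $O(s)$ uniformly by Lemma~\ref{lem:stability_stationarymean_momen}. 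Hence $|\Cov(Z,\lambda_i(s)-\lambda_i(0))|\le 2R\cdot k\thetahi\cdot 2\beta s\max m_\ell$. Integrating over $s\in[0,h]$ gives $O(Rh^2)$. Combining Steps 1 and 2 yields \eqref{eq:one-bin-expansion} with $C_{\rm bin}$ depending on $(\beta,\mu_+,w_+,k)$ through the moment bounds; since we work with $\alpha\le\alpha_{\rm cov}$, we can absorb $\thetahi$ into a constant.

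\textbf{Main obstacle.} The delicate point is the factorial-moment bound in Step 1, that is, justifying $\E[N(N-1)]\le h^2\sup\E\lambda^2$ for a self-exciting process. Here $\lambda_i(t)$ itself jumps at events of its parents, and of $i$ itself if $i\in S_i$. The first approach would be to note that $N(N-1)=2\int_0^h N((0,t))\,dN(t)$, whose expectation is $2\E\int_0^h N((0,t))\lambda_i(t)\,dt$. Then apply Cauchy--Schwarz with $\E[N((0,t))^2]\le C t$ (which follows from the same martingale decomposition plus bounded $\E\lambda^2$). This gives only $\E[N(N-1)]\lesssim\int_0^h\sqrt{t}\,dt\sim h^{3/2}$, which is too weak. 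The better route is to condition more carefully: $N((0,t))\lambda_i(t)$ has expectation $\int_0^t\E[\lambda_i(s)\lambda_i(t)]\,ds$ plus cross terms that are controlled by the fact that jumps in $\lambda_i$ induced by events in $(0,t)$ are bounded by $\thetahi$ times counts. Bounding $\E[\lambda_i(s)\lambda_i(t)]$ needs uniform bounds on $\E[\lambda_i(t)^2]$ along the bin and on mixed terms like $\E[N_\ell N_i]$, which are finite by Proposition~\ref{prop:moments} with $P=4$.
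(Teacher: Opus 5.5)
\paragraph{Verdict: the proposal has a gap in Step 1.} Your overall architecture matches the paper's. You replace $Y$ by $N_i((0,h])$ at cost $2R\,\E[N_i((0,h])-Y]$, trade $N_i$ for its compensator using $\mathcal F_0$-measurability of $Z$, and control the drift via $\E|X_\ell(s)-X_\ell(0)|\le 2\beta s\,m_\ell$. Step 2 is correct as written. The gap is in Step 1, which is where the $h^2$ actually comes from.

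\paragraph{Where Step 1 fails.} The identity $\E[N(N-1)]=2\E\int_0^h\!\int_0^t\lambda_i(s)\lambda_i(t)\,ds\,dt$ holds for Cox processes, not for Hawkes processes. The correct identity is $\E[N(N-1)]=2\E\int_0^h N((0,t))\lambda_i(t)\,dt$. This is the same quantity the paper bounds via $\ind\{N_i((0,t))\ge1\}\le N_i((0,t])$. Writing $N((0,t))=\int_0^t\lambda_i+M_i(t-)$ leaves the term $\E[M_i(t-)\lambda_i(t)]$. That term is not zero, because events in $(0,t)$ raise $\lambda_i(t)$: directly if $i\in S_i$, or through parents that are themselves excited by $i$. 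Your last paragraph notices this, and you correctly observe that Cauchy--Schwarz applied to $N((0,t))\lambda_i(t)$ only gives $O(\sqrt t)$. But the proposed repair stops at saying the mixed moments are ``finite''. What is needed is that they are $O(t)$, and the proposal never establishes this.

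\paragraph{The missing idea.} Split $\lambda_i(t)$ before using Cauchy--Schwarz. Since $X_\ell(t)\le X_\ell(0)+N_\ell((0,t])$,
\[
\lambda_i(t)\le C\Bigl(1+\sum_{\ell\in S_i}X_\ell(0)+\sum_{\ell\in S_i}N_\ell((0,t])\Bigr).
\]
\begin{itemize}
\item The $\mathcal F_0$-measurable part goes through the compensator: $\E[N_i((0,t])X_\ell(0)]=\E[X_\ell(0)\int_0^t\lambda_i]\le Ct$, by bounded second moments.
\item Only the count part needs Cauchy--Schwarz, and there both factors are small. For $t\le 1$, $\E[N_q((0,t])^2]\le 2\E\int_0^t\lambda_q+2t\int_0^t\E[\lambda_q^2]\le Ct$, so $\E[N_i((0,t])N_\ell((0,t])]\le Ct$ rather than $C\sqrt t$.
\end{itemize}
Together these give $\E[N_i((0,t])\lambda_i(t)]\le Ct$, hence $\E[N_i((0,h])-Y]\le Ch^2$. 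This is exactly how the paper closes the step.

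The same split would also rescue your own cross term $\E[M_i(t-)\lambda_i(t)]$. The $\mathcal F_0$-measurable part of $\lambda_i(t)$ contributes zero against the martingale. The increment part, bounded by $\thetahi\sum_{\ell\in S_i}N_\ell((0,t])$, contributes $O(\sqrt t\cdot\sqrt t)=O(t)$.
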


\begin{proof}
Fix $(i,j)$ and write $Z:=\psi_R(X_j(0))$, $N_i^{(h)}:=N_i((0,h])
$ and $\Lambda_i^{(h)}:=\int_0^h \lambda_i(t)\,dt$ (note that $Z$ is $\mathcal F_0$-measurable). By the compensator identity for $N_i$ and using $\ind\{N_i((0,t))\ge 1\}\le N_i((0,t])$ for the inequality, we have
\begin{equation}
    \label{eq:step in lemma9}
    \E\bigl[N_i^{(h)}-Y_{i,0}^{(h)}\bigr]= \E\int_0^h \ind\{N_i((0,t))\ge 1\}\lambda_i(t)\,dt\le \int_0^h \E\bigl[N_i((0,t])\lambda_i(t)\bigr]dt.
\end{equation}
Now, for $t\le 1$, using $X_\ell(t)\le X_\ell(0)+N_\ell((0,t])$
\[
\lambda_i(t)
\le
\mu_+ + \theta_+\sum_{\ell\in S_i}X_\ell(t)
\le
C_1\left(1+\sum_{\ell\in S_i}X_\ell(0)+\sum_{\ell\in S_i}N_\ell((0,t])\right),
\]
Hence the expectation in the previous integral satisfies 
\begin{equation}
\label{eq:bound general}
\begin{split}
    \mathbb{E} \bigl[ N_i((0,t]) \lambda_i(t) \bigr] 
    \le{} & C_1 \mathbb{E} [N_i((0,t])] + C_1 \sum_{\ell \in S_i} \mathbb{E} \bigl[ N_i((0,t]) X_\ell(0) \bigr] \\
    & + C_1 \sum_{\ell \in S_i} \mathbb{E} \bigl[ N_i((0,t]) N_\ell((0,t]) \bigr].
\end{split}
\end{equation}
The first term satisfies $\E[N_i((0,t])]\le C_2t$ by compensator identity and Proposition~\ref{lem:stability_stationarymean_momen}. For the mixed term, using the compensator identity multiple times, $\lambda_i(s)\le C_3(1+\sum_{m\in S_i}X_m(s))$, Cauchy Schwarz, stationarity, and Proposition~\ref{prop:moments} with $P=2$, it holds $\E[N_i((0,t])X_\ell(0)]\le C_3t$. For the last term, we use the martingale decomposition for $N_q((0,t])$ (for any $q$), It\^{o} isometry, and stationarity to state that $M_q$ is a square-integrable martingale, and then for $t\le 1$ it holds 
\[
\E\bigl[N_q((0,t])^2\bigr]
\le
2\E[M_q(t)^2] + 2\E\left[\left(\int_0^t \lambda_q(s)\,ds\right)^2\right]
\le C_4t \qquad \text{uniformly in } (q,d,B),
\]
where again the elementary inequality $(x+y)^2\le C(x^2+y^2)$, the It\^{o} isometry, Cauchy Schwarz for integrals, and Proposition~\ref{prop:moments} with $P=2$ have been used. Then it is easy to bound the term $\E\bigl[N_i((0,t])N_\ell((0,t])\bigr]$ by Cauchy Schwarz. Hence, using $|S_i|\le k$, combining all the bounds obtained above for the right-hand terms in \eqref{eq:bound general} and defining $C_{bin,1}$ as a convenient combination of the constants in \eqref{eq:bound general} lead to $\E\bigl[N_i((0,t])\lambda_i(t)\bigr]\le C_{bin,1}t$,  and substituting this bound in \eqref{eq:step in lemma9} gives:
\[
\E\bigl[N_i^{(h)}-Y_{i,0}^{(h)}\bigr]\le \int_0^h \E\bigl[N_i((0,t])\lambda_i(t)\bigr]dt\le \frac{C_{bin,1}}{2}h^2
\]
Exploiting the facts that $0\le Z\le R$ and that  $\Cov(Z,N_i^{(h)})=\Cov(Z,\Lambda_i^{(h)})$ (this follows from substituting the martingale decomposition in the expectations, because $Z$ is $\mathcal F_0$-measurable and $N_i(t)-\int_0^t\lambda_i(s)ds$ is an $(\mathcal F_t)$-martingale), then it is possible to show the following:
\begin{equation}
\label{eq:indicator-to-comp}
\abs{\Cov(Z,Y_{i,0}^{(h)})-\Cov(Z,N_i^{(h)})}
\le 2\|Z\|_\infty\E\bigl[N_i^{(h)}-Y_{i,0}^{(h)}\bigr]
\le C_{bin,1} R h^2.
\end{equation}
\[
\Cov(Z,N_i^{(h)}) - h\Cov(Z,\lambda_i(0))=\Cov(Z,\Lambda_i^{(h)}) - h\Cov(Z,\lambda_i(0))
=
\int_0^h \Cov\bigl(Z,\lambda_i(t)-\lambda_i(0)\bigr)dt.
\]
As before, it is easy to show that $\abs{\Cov\bigl(Z,\lambda_i(t)-\lambda_i(0)\bigr)}
\le 2R\,\E\abs{\lambda_i(t)-\lambda_i(0)}$. Now:
\[
\lambda_i(t)-\lambda_i(0)=\sum_{\ell\in S_i} \theta_{i\ell}(X_\ell(t)-X_\ell(0)),
\]
Writing down the integral form of SDE for $X_\ell(t)$, taking the absolute value, and using stationarity and Lemma~\ref{lem:stability_stationarymean_momen}, we get 
\[
\E\abs{X_\ell(t)-X_\ell(0)}
\le \beta\int_0^t \E[X_\ell(s)]ds + \E[N_\ell(t)-N_\ell(0)]
\le C_5t.
\]
Therefore, we can define conveniently $C_{bin,2}$ such that $\E|\lambda_i(t)-\lambda_i(0)|\le C_{bin,2}t$ uniformly in $(i,d,B)$, and thus
\[
\abs{\Cov(Z,\Lambda_i^{(h)}) - hH_{ij}^{(R)}}
\le 2R\int_0^h C_{bin,2}t\,dt
\le C_{bin,2} R h^2.
\]
Combining this with \eqref{eq:indicator-to-comp} and defining $C_{bin}:=C_{bin,1}+C_{bin,2}$ proves \eqref{eq:one-bin-expansion}.
\end{proof}
Lemma~\ref{lem:one-bin-expansion}, together with the screening gap already established for the clipped covariances, yields Proposition~\ref{prop:one-bin-gap} below: the separation between true parents and non-parents is preserved by the observable quantities used by the estimator, provided $R$ is sufficiently large and $h$ is sufficiently small relative to $\alpha$.
\begin{proposition}
\label{prop:one-bin-gap}
There exist $\alpha_2>0$, a function $R_2:(0,\alpha_2]\to[1,\infty)$, and a constant $c_{\scr}>0$ depending only on $(\beta,\mu_{\pm},w_{\pm},k)$, such that for every fixed $0<\alpha\le \alpha_2$, every $R\ge R_2(\alpha)$, and every
\[
0<h\le \min\Bigl\{\frac{\mu_-w_-}{32\beta C_{\rm bin}}\frac{\alpha}{R}, 1\Bigr\},
\]
one has, for every row $i$ with $S_i\neq\varnothing$,
\begin{equation}
\label{eq:F-gap}
\min_{j\in S_i}F_{ij}^{(h,R)} - \max_{j\notin S_i}F_{ij}^{(h,R)}
\ge c_{\scr}\alpha h ,
\end{equation}
while $F_{ij}^{(h,R)}=0$ for all $j$ whenever $S_i=\varnothing$.
\end{proposition}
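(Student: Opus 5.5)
The plan is to combine Lemma~\ref{lem:one-bin-expansion} with the clipped screening gap of Proposition~\ref{prop:clipped-screen-gap}. Take $\alpha_2:=\alpha_1$ and $R_2(\alpha):=R_1(\alpha)$, so that for every $0<\alpha\le\alpha_2$ and $R\ge R_2(\alpha)$ the gap \eqref{eq:H-gap} holds:
\[
\min_{j\in S_i}H_{ij}^{(R)}-\max_{j\notin S_i}H_{ij}^{(R)}\ge \frac{\mu_-w_-}{8\beta}\alpha
\]
for every row $i$ with $S_i\neq\varnothing$. This bound is uniform in $d$ and $B$.

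Next we transfer this to $F$. Since $0<h\le1$ and $R\ge R_2(\alpha)\ge 1$, Lemma~\ref{lem:one-bin-expansion} gives $F_{ij}^{(h,R)}\ge hH_{ij}^{(R)}-C_{\rm bin}Rh^2$ for $j\in S_i$ and $F_{ij}^{(h,R)}\le hH_{ij}^{(R)}+C_{\rm bin}Rh^2$ for $j\notin S_i$. Taking the minimum and maximum respectively and subtracting yields
\[
\min_{j\in S_i}F_{ij}^{(h,R)}-\max_{j\notin S_i}F_{ij}^{(h,R)}\ge h\,\frac{\mu_-w_-}{8\beta}\alpha-2C_{\rm bin}Rh^2 .
\]
The step-size restriction $h\le \frac{\mu_-w_-}{32\beta C_{\rm bin}}\frac{\alpha}{R}$ gives $2C_{\rm bin}Rh\le \frac{\mu_-w_-}{16\beta}\alpha$. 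Hence the right-hand side is at least $\frac{\mu_-w_-}{16\beta}\alpha h$, and \eqref{eq:F-gap} follows with $c_{\scr}:=\mu_-w_-/(16\beta)$. This constant depends only on the allowed parameters.

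The case $S_i=\varnothing$ needs a separate argument, because the additive $O(Rh^2)$ error would not by itself give exact zero. Here I would argue directly. If node $i$ has no parents, then $\lambda_i\equiv\mu_i$ and $N_i$ is a homogeneous Poisson process of rate $\mu_i$. In the cluster representation, its points are exactly the immigrants of type $i$. Since type $i$ has no parents, node $i$ receives no offspring, so these points are independent of all other immigrants and of their descendants. Therefore $N_i$ restricted to $(0,h]$ is independent of $\mathcal F_0$, and in particular of $Z_j^{(R)}$. It follows that $F_{ij}^{(h,R)}=\Cov(Z_j^{(R)},Y_{i,0}^{(h)})=0$ for all $j$.

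The only delicate point is this independence claim. It requires the cluster construction, not merely the fact that the compensator of $N_i$ is deterministic. I would either cite the Poisson cluster representation invoked later in Section~A4, or note that a deterministic compensator plus the martingale characterization (Watanabe) makes $N_i$ Poisson with independent increments relative to the filtration $\mathcal F_t$. The second route gives independence of $N_i((0,h])$ from $\mathcal F_0$ immediately.
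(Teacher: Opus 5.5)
Your proposal is correct and follows the paper's proof. You combine Lemma~\ref{lem:one-bin-expansion} with the clipped gap \eqref{eq:H-gap}, absorb the $2C_{\rm bin}Rh^2$ error via the step-size restriction to obtain $c_{\scr}=\mu_-w_-/(16\beta)$, and handle $S_i=\varnothing$ by the independence of $N_i((0,h])$ from $\mathcal F_0$; your Watanabe-based justification of that independence is a useful addition, since the paper simply asserts it.
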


\begin{proof}
If $S_i=\varnothing$, then $\lambda_i(t)\equiv \mu_i$ and $Y_{i,0}^{(h)}$ is independent of $X(0)$, so $F_{ij}^{(h,R)}=0$ for all $j$.
Assume $S_i\neq\varnothing$: by Lemma~\ref{lem:one-bin-expansion}, $F_{ij}^{(h,R)} = hH_{ij}^{(R)} + O(Rh^2)$ and by Proposition~\ref{prop:clipped-screen-gap}, uniformly in $i$,
\[
\min_{j\in S_i}F_{ij}^{(h,R)}
-\max_{j\notin S_i}F_{ij}^{(h,R)}
\ge h\cdot \frac{\mu_-w_-}{8\beta}\alpha - 2C_{\text{bin}} R h^2.
\]
Choose $\alpha_2\le \alpha_1$ from Proposition~\ref{prop:clipped-screen-gap}, and then choose $R_2(\alpha)\ge R_1(\alpha)$. If $R\ge R_2(\alpha)$ and
\[
0<h\le \min\Bigl\{\frac{\mu_-w_-}{32\beta C_{\rm bin}}\frac{\alpha}{R}, 1\Bigr\},
\]
then
\[
C_{\rm bin}Rh\le \frac{\mu_-w_-}{32\beta}\alpha,
\]
so combining this with the previous inequality proves \eqref{eq:F-gap}.
\end{proof}

\subsection{Bounded local regression}
\label{appendix:3}
Fix $R\ge 1$ and a candidate set $C\subset\{1,\dots,d\}$ with $|C|\le m$. Define:
\[
\Sigma_C^{(R)}:=\Cov\bigl(Z_C^{(R)}(0),Z_C^{(R)}(0)\bigr),
\qquad
Z_C^{(R)}(0):=(Z_j^{(R)})_{j\in C}, \qquad g_{i,C}^{(h,R)}:=\Cov\bigl(Z_C^{(R)}(0),Y_{i,0}^{(h)}\bigr).
\]
Before stating the uniform nondegeneracy of the clipped Gram matrix $\Sigma_C^{(R)}$, we define, whenever $\Sigma_C^{(R)}$ is invertible, the centered theoretical least-squares slope as
\begin{equation}
\label{eq:population-slope}
y_{i,C}^{(h,R)}:=\bigl(\Sigma_C^{(R)}\bigr)^{-1}g_{i,C}^{(h,R)}
\end{equation}

\begin{lemma}
\label{lem:Gram-nondeg}
There exist $\alpha_3>0$, $R_3:(0,\alpha_3]\to[1,\infty)$, and $\kappa>0$, depending only on $(\beta,\mu_-,\mu_+,w_+,k,m)$, such that for every fixed $0<\alpha\le \alpha_3$, every $R\ge R_3(\alpha)$, every $d$, every $B\in\mathcal B_{d,k}$, and every $C\subset\{1,\dots,d\}$ with $|C|\le m$, we have $\lambda_{\min}(\Sigma_C^{(R)})\ge \kappa$.
\end{lemma}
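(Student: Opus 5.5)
The approach is a diagonal-dominance argument. The clipped Gram matrix is compared with the unclipped covariance $\Sigma_C:=\Cov(X_C(0),X_C(0))$, and the weak-interaction estimates of Proposition~\ref{prop:cov-perturb} then show that $\Sigma_C$ is a small perturbation of the diagonal matrix $\operatorname{diag}(\mu_j/(2\beta))_{j\in C}$. Since $|C|\le m$ is fixed, entrywise bounds convert to operator-norm bounds at the cost of a factor $m$, and Weyl's inequality finishes the argument.

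\textbf{Step 1 (clipping).} By \eqref{eq:clip-cov2} in Lemma~\ref{lem:clip-bias}, every entry of $\Sigma_C^{(R)}-\Sigma_C$ is at most $C_{\rm clip}/R$ in absolute value. This holds uniformly in $d$, $B$ and $C$. Hence
\[
\|\Sigma_C^{(R)}-\Sigma_C\|_{\op}\le \|\Sigma_C^{(R)}-\Sigma_C\|_F\le \frac{mC_{\rm clip}}{R}.
\]

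\textbf{Step 2 (unclipped Gram matrix near diagonal).} Let $D_C:=\operatorname{diag}(\mu_j/(2\beta))_{j\in C}$. For $\alpha\le\alpha_{\rm cov}$, Proposition~\ref{prop:cov-perturb} gives
\[
|(\Sigma_C-D_C)_{jj}|\le C_{\rm diag}\alpha,\qquad |(\Sigma_C)_{j\ell}|\le C_{\rm off}\alpha\quad (j\ne\ell).
\]
Gershgorin (or the Frobenius bound again) then gives
\[
\|\Sigma_C-D_C\|_{\op}\le \bigl(C_{\rm diag}+(m-1)C_{\rm off}\bigr)\alpha.
\]
Moreover $\lambda_{\min}(D_C)\ge \mu_-/(2\beta)$.

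\textbf{Step 3 (choice of constants).} By Weyl's inequality,
\[
\lambda_{\min}(\Sigma_C^{(R)})\ge \frac{\mu_-}{2\beta}-\bigl(C_{\rm diag}+mC_{\rm off}\bigr)\alpha-\frac{mC_{\rm clip}}{R}.
\]
Choose $\alpha_3\le\min\{\alpha_{\rm cov},\alpha_4\}$, where $\alpha_4$ is the level from Proposition~\ref{prop:moments} with $P=4$ that is needed in Lemma~\ref{lem:clip-bias}. Also require $(C_{\rm diag}+mC_{\rm off})\alpha_3\le \mu_-/(8\beta)$. Then set $R_3(\alpha):=\max\{1,\,8\beta mC_{\rm clip}/\mu_-\}$. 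This yields $\kappa=\mu_-/(4\beta)$, which depends only on the allowed parameters. Here $R_3$ does not even depend on $\alpha$, which is harmless.

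\textbf{Main obstacle.} The delicate point is uniformity. The constants $C_{\rm diag}$, $C_{\rm off}$ and $C_{\rm clip}$ must be independent of $d$, $B$ and $C$. This holds because Propositions~\ref{prop:moments} and~\ref{prop:cov-perturb} are stated with suprema over $d$, $B$ and the indices. It also needs the off-diagonal contributions to be summed over only $m$ coordinates, not $d$, which is where $|C|\le m$ is used; the factor $m$ is exactly why $\alpha_3$ and $R_3$ depend on $m$.

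The subtle case is $j,\ell\in C$ with $j\ne\ell$ where one node is a parent of the other. In that case the true covariance is only $O(\alpha)$ rather than $O(\alpha^2)$. The $O(\alpha)$ off-diagonal bound of Proposition~\ref{prop:cov-perturb} still suffices, since $\alpha$ is taken small relative to the diagonal level $\mu_-/(2\beta)$.
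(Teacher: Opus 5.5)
Your proof is correct and is essentially the paper's argument. Both use the same ingredients: entrywise bounds from Lemma~\ref{lem:clip-bias} and Proposition~\ref{prop:cov-perturb}, conversion to operator norm at the cost of a factor depending on $m$, and Weyl's inequality. The one difference is that you perturb directly around $\operatorname{diag}(\mu_j/(2\beta))_{j\in C}$, whereas the paper perturbs around the clipped Gram matrix $\Sigma_{C,0}^{(R)}$ of the decoupled model at $\alpha=0$. This saves you the detour through the independent model and the second application of the clipping lemma there.
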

\begin{proof}
Under $\alpha=0$ the coordinates are independent shot-noise processes. Hence, for any fixed $C$:
\[
\Sigma_{C,0}^{(R)} = \text{diag}\Bigl(\Var_0\bigl(\psi_R(X_j(0))\bigr)\Bigr)_{j\in C}.
\]
Now $\Var_0(X_j(0))=\mu_j/(2\beta)\ge \mu_-/(2\beta)$, then Lemma~\ref{lem:clip-bias} with $i=j$ and $\ell=j$ and choosing $R$ large enough gives
$\Var_0\bigl(\psi_R(X_j(0))\bigr)\ge \mu_-/4\beta$ for all $j$ . Thus $\lambda_{\min}(\Sigma_{C,0}^{(R)})\ge \mu_-/4\beta$ for all $|C|\le m$.
Next compare $\Sigma_C^{(R)}$ to $\Sigma_{C,0}^{(R)}$. Fix $j,\ell\in C$. By adding and subtracting the unclipped covariances at couplings $\alpha$ and $0$,
\begin{align*}
&\Cov_\alpha\bigl(Z_j^{(R)},Z_\ell^{(R)}\bigr)-\Cov_0\bigl(Z_j^{(R)},Z_\ell^{(R)}\bigr) \\
&\qquad= \Bigl[\Cov_\alpha\bigl(Z_j^{(R)},Z_\ell^{(R)}\bigr)-\Cov_\alpha(X_j,X_\ell)\Bigr]
+ \Bigl[\Cov_\alpha(X_j,X_\ell)-\Cov_0(X_j,X_\ell)\Bigr] \\
&\qquad\phantom{=}+ \Bigl[\Cov_0(X_j,X_\ell)-\Cov_0\bigl(Z_j^{(R)},Z_\ell^{(R)}\bigr)\Bigr].
\end{align*}
The first and third brackets are $O(R^{-1})$ uniformly by Lemma~\ref{lem:clip-bias}. For the middle bracket, if $j=\ell$ then Proposition~\ref{prop:cov-perturb} gives
\[
\abs{\Var_\alpha(X_j)-\Var_0(X_j)}\le C\alpha,
\]
while if $j\neq \ell$ then independence at $\alpha=0$ gives $\Cov_0(X_j,X_\ell)=0$ and Proposition~\ref{prop:cov-perturb} yields
\[
\abs{\Cov_\alpha(X_j,X_\ell)-\Cov_0(X_j,X_\ell)}=\abs{\Cov_\alpha(X_j,X_\ell)}\le C\alpha.
\]
Hence every entry of $\Sigma_C^{(R)}-\Sigma_{C,0}^{(R)}$ is $O(\alpha)+O(R^{-1})$, uniformly over all $(d,B,C)$. Recalling the bound for operator norm given by the Frobenius norm, since the matrix dimension is at most $m$, it holds that
\[
\sup_{|C|\le m}\norm{\Sigma_C^{(R)}-\Sigma_{C,0}^{(R)}}_\op
\le C_m\bigl(\alpha+R^{-1}\bigr).
\]
We take $\alpha_3>0$ small and then $R_3(\alpha)$ large enough so that the right-hand side is at most $\mu_-/(8\beta)$. Weyl's inequality and bound $\lambda_{\min}(\Sigma_{C,0}^{(R)})\ge \mu_-/4\beta$ yield the desired claim with $\kappa:=\mu_-/8\beta$
\end{proof}
The next proposition shows how the theoretical OLS slope computed on any candidate set $C \supset S_i$ of bounded size is close to the rescaled true coefficient vector $\theta_{i,C}$ up to small errors from clipping and binning and that this approximation is tight enough to correctly separate true parents from non-parents:
\begin{proposition}
\label{prop:population-beta}
There exist $\alpha_4>0$, a function $R_4:(0,\alpha_4]\to[1,\infty)$, and a constant $C_y>0$ depending only on $(\beta,\mu_{\pm},w_{\pm},k,m)$ such that for every fixed $0<\alpha\le \alpha_4$, every $R\ge R_4(\alpha)$, and every
\[
0<h\le \min\left\{\frac{w_-}{8C_y}\frac{\alpha}{R}, 1\right\},
\]
one has, for every row $i$ and every candidate set $C\supset S_i$ with $|C|\le m$,
\begin{equation}
\label{eq:population-beta-bound}
\norm{y_{i,C}^{(h,R)} - h\theta_{i,C}}_\infty \le C_y\left(\frac{\alpha h}{R} + R h^2\right).
\end{equation}
In particular,
\begin{equation}
\label{eq:population-beta-threshold}
\min_{j\in S_i} y_{ij,C}^{(h,R)} \ge \frac{3}{4}\theta_- h,
\qquad
\max_{j\in C\setminus S_i} \abs{y_{ij,C}^{(h,R)}} \le \frac14 \theta_- h.
\end{equation}
\end{proposition}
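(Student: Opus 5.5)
The plan is to write the population cross-covariance $g_{i,C}^{(h,R)}$ as $h\,\Sigma_C^{(R)}\theta_{i,C}$ plus small errors, and then invert $\Sigma_C^{(R)}$ using the uniform eigenvalue bound of Lemma~\ref{lem:Gram-nondeg}. Throughout I take $\alpha_4\le\min\{\alpha_2,\alpha_3\}$ and $R_4(\alpha)\ge R_3(\alpha)$, so that $\lambda_{\min}(\Sigma_C^{(R)})\ge\kappa$ for all $|C|\le m$.

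\textbf{Step 1: binning.} For each $j\in C$, Lemma~\ref{lem:one-bin-expansion} gives entrywise
\[
g_{i,C}^{(h,R)}=h\,H_{i,C}^{(R)}+r_1,
\qquad
\|r_1\|_\infty\le C_{\rm bin}Rh^2,
\]
where $H_{i,C}^{(R)}:=(H_{ij}^{(R)})_{j\in C}$.

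\textbf{Step 2: linearity in the state.} Since $\lambda_i(0)=\mu_i+\sum_{\ell\in S_i}\theta_{i\ell}X_\ell(0)$ and $C\supseteq S_i$,
\[
H_{i,C}^{(R)}=\Cov\bigl(Z_C^{(R)},X_C(0)\bigr)\,\theta_{i,C}.
\]
Write $X_\ell=Z_\ell^{(R)}+\delta_\ell^{(R)}$. Then
\[
\Cov\bigl(Z_C^{(R)},X_C\bigr)=\Sigma_C^{(R)}+E,
\qquad
E_{j\ell}=\Cov\bigl(Z_j^{(R)},\delta_\ell^{(R)}\bigr).
\]
To bound $E$ I argue exactly as in the proof of Lemma~\ref{lem:clip-bias}: Cauchy--Schwarz with $\E[(\delta_\ell^{(R)})^2]\le C/R^2$ and $\E[(Z_j^{(R)})^2]\le\E[X_j^2]$ (Proposition~\ref{prop:moments} with $P=4$) gives $|E_{j\ell}|\le C/R$ uniformly. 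Since $\theta_{i,C}$ has at most $k$ nonzero entries, each at most $\alpha w_+$, it follows that
\[
\|E\theta_{i,C}\|_\infty\le \frac{Ck w_+\alpha}{R}.
\]

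\textbf{Step 3: inversion.} Combining Steps 1 and 2,
\[
y_{i,C}^{(h,R)}-h\theta_{i,C}=\bigl(\Sigma_C^{(R)}\bigr)^{-1}\bigl(hE\theta_{i,C}+r_1\bigr).
\]
Using $\|A\|_{\infty\to\infty}\le\sqrt m\,\|A\|_{\op}$ for $m\times m$ matrices and $\|(\Sigma_C^{(R)})^{-1}\|_{\op}\le\kappa^{-1}$, this yields \eqref{eq:population-beta-bound} with $C_y:=\sqrt m\,\kappa^{-1}\max\{Ckw_+,\,C_{\rm bin}\}$. This constant is uniform in $d,B,i,C$.

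\textbf{Step 4: threshold consequence.} I enlarge $R_4(\alpha)$ so that $C_y/R\le w_-/8$; this gives $C_y\alpha h/R\le\theta_-h/8$. The assumed constraint $h\le w_-\alpha/(8C_yR)$ gives $C_yRh^2\le\theta_-h/8$. The total error is therefore at most $\theta_-h/4$. For $j\in S_i$ we have $h\theta_{ij}\ge\theta_-h$, so $y_{ij,C}^{(h,R)}\ge\tfrac34\theta_-h$. For $j\in C\setminus S_i$ we have $\theta_{ij}=0$, so $|y_{ij,C}^{(h,R)}|\le\tfrac14\theta_-h$. This is \eqref{eq:population-beta-threshold}.

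The main obstacle is Step 2. The clipping error enters as $hE\theta_{i,C}$ and must be of order $\alpha h/R$, not merely $h/R$, since otherwise it could not be dominated by $\theta_-h$ through $R$ alone. This requires the factorization through $\theta_{i,C}$ rather than a direct application of the bound $|H-G|\le C/R$ from Lemma~\ref{lem:clip-bias}. It also requires the cross-moment bound on $\Cov(Z_j^{(R)},\delta_\ell^{(R)})$ uniformly in $d$, which comes from the fourth-moment control of Proposition~\ref{prop:moments}.
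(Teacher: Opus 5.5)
Your proposal is correct and follows the paper's proof essentially step for step. It uses the one-bin expansion, then the factorization $\Cov(Z_C^{(R)},\lambda_i)=\Sigma_C^{(R)}\theta_{i,C}+(\text{clipping remainder})$ via $C\supseteq S_i$ with the remainder of order $\alpha/R$, then inversion through $\lambda_{\min}(\Sigma_C^{(R)})\ge\kappa$, and finally the same choices $R_4(\alpha)\ge\max\{R_3(\alpha),8C_y/w_-\}$ and $h\le\theta_-/(8C_yR)$. The only difference is that you keep the $\sqrt{m}$ factor explicit when passing from the operator norm to the $\ell_\infty$ bound, whereas the paper absorbs it into $C_y$.
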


\begin{proof}
Fix $(i,C)$. Recalling the definition of $g_{i,C}^{(h,R)}$, Lemma~\ref{lem:one-bin-expansion} applied coordinatewise gives
\begin{equation}
\label{eq:g-expand-1}
g_{i,C}^{(h,R)} = h\Cov\bigl(Z_C^{(R)}(0),\lambda_i(0)\bigr)+r_{i,C}^{(h,R)},
\qquad
\norm{r_{i,C}^{(h,R)}}_\infty\le C R h^2.
\end{equation}
Now, because $C\supset S_i$, using the definition for $\lambda_i(0)$:
\[
\Cov\bigl(Z_C^{(R)},\lambda_i\bigr)
=
\sum_{\ell\in C}\theta_{i\ell}\Cov\bigl(Z_C^{(R)},X_\ell\bigr).
\]
Writing $X_\ell = Z_\ell^{(R)} + \delta_\ell^{(R)}$ with $\delta_\ell^{(R)}=(X_\ell-R)_+$ gives $\Cov\bigl(Z_C^{(R)},\lambda_i\bigr)=\Sigma_C^{(R)}\theta_{i,C}+q_{i,C}^{(R)}$, where  $q_{i,C}^{(R)}:=\sum_{\ell\in C}\theta_{i\ell}\Cov\bigl(Z_C^{(R)},\delta_\ell^{(R)}\bigr)$, then 
\begin{equation}
    \label{eq:grewritten}
    g_{i,C}^{(h,R)} = h\Sigma_C^{(R)}\theta_{i,C} + h q_{i,C}^{(R)} + r_{i,C}^{(h,R)}.
\end{equation}
To bound $q_{i,C}^{(R)}$, fix $j,\ell\in C$. Since $0\le Z_j^{(R)}\le X_j$ and $\delta_\ell^{(R)}=(X_\ell-R)_+$,
\[
\abs{\Cov\bigl(Z_j^{(R)},\delta_\ell^{(R)}\bigr)}
\le \E\bigl[Z_j^{(R)}\delta_\ell^{(R)}\bigr] + \E[Z_j^{(R)}]\E[\delta_\ell^{(R)}]
\le \E\bigl[X_j\delta_\ell^{(R)}\bigr] + \E[X_j]\E[\delta_\ell^{(R)}].
\]
By Cauchy--Schwarz, Proposition~\ref{prop:moments} with $P=4$ and \eqref{eq:delta-second-moment-main},
\[
\E\bigl[X_j\delta_\ell^{(R)}\bigr]
\le \sqrt{\E[X_j^2]\E[(\delta_\ell^{(R)})^2]}
\le \frac{C}{R},
\qquad
\E[X_j]\E[\delta_\ell^{(R)}]\le \frac{C}{R}.
\]
Hence each coordinate of $\Cov(Z_C^{(R)},\delta_\ell^{(R)})$ is $O(R^{-1})$, uniformly in $(d,B,i,C,\ell)$. Since $|C|\le m$ and $\theta_{i\ell}\le \alpha w_+$, 
\[ \norm{q_{i,C}^{(R)}}_\infty\le C \alpha/R. 
\]
Now, rewriting \eqref{eq:population-slope} using \eqref{eq:grewritten}, gives
\[
y_{i,C}^{(h,R)} - h\theta_{i,C}
=
\bigl(\Sigma_C^{(R)}\bigr)^{-1}\Bigl(h q_{i,C}^{(R)} + r_{i,C}^{(h,R)}\Bigr).
\]
Therefore, using Lemma~\ref{lem:Gram-nondeg} implication $\norm{(\Sigma_C^{(R)})^{-1}}_\op\le \kappa^{-1}$ and the obtained bound for $\norm{q_{i,C}^{(R)}}_\infty$ gives
\[
\norm{y_{i,C}^{(h,R)} - h\theta_{i,C}}_\infty
\le C_y\left(\frac{\alpha h}{R} + R h^2\right)
\]
for a constant $C_y$ depending only on the model parameters. Choose $\alpha_4\le \alpha_3$ and then choose $R_4(\alpha)\ge \max\left\{R_3(\alpha), 8C_y/w_-\right\}$ and $0<h\le \min\{1,\theta_-/(8C_yR)\}$. If $R\ge R_4(\alpha)$ then
\[
C_y\frac{\alpha h}{R}\le \frac18\theta_-h,
\qquad
C_y R h^2\le \frac18\theta_- h,
\]
so the right-hand side of \eqref{eq:population-beta-bound} is at most $h \theta_-/4$. Since $\theta_{ij}\ge \theta_-$ on $S_i$ and $\theta_{ij}=0$ on $C\setminus S_i$, \eqref{eq:population-beta-threshold} follows.
\end{proof}

\subsection{Bounded-block concentration via Poisson clusters}
\label{appendix:4}
First of all, set $0<h\le h_0:=1$ and recall $\gamma<1$. The observables we are focusing on are the following four:
\begin{equation}
    \label{eq:explicit-family}
    Z_{j,r}^{(R)},
    \qquad
    Y_{i,r}^{(h)},
    \qquad
    Z_{j,r}^{(R)}Y_{i,r}^{(h)},
    \qquad
Z_{a,r}^{(R)}Z_{b,r}^{(R)}
\end{equation}
Now, we introduce the terminology used in the context of the cluster representation of Hawkes processes:
\begin{itemize}
    \item\textbf{Root}: these are events that arrive according to a independent Poisson process with background intensity $\mu_v$. They are ``exogenous'' because they are not caused by previous events;
    \item\textbf{Type-$j$}: since the process $X$ has $d$ components indexed by $j=1,\dots, d$, each event in the system carries a label $j$ indicating which component it belongs to. This label is called its ``type'';
    \item \textbf{Offspring}: each root can trigger direct offspring, which in turn can trigger their own offspring (called multiple generations). Thus, each cluster $\mathcal C$ has an independent root and each event of type-$j$ produces type-$i$ children according to an independent Poisson point process on $(0,\infty)$ with intensity $\theta_{ij} \exp\{-\beta t\}\,dt$. Therefore the mean offspring matrix is $K:=\Theta/\beta$ and consequently $\|K\|_{\infty}\le \gamma < 1$;
    \item\textbf{Progeny}: while offspring refers only to the events that are immediately triggered by one specific root, the progeny of one root includes every single event in the cluster, across all generations;
    \item\textbf{Age}: in the cluster representation, each cluster has a root event that is ``born'' at some time $t$, namely the time at which the root arrives. The age of any event in the cluster is the time that passed since the root was born. Thus if an event occurs at time $s$, its age is $s - t$. Hence, ``age'' is not absolute time, but rather time measured relative to the birth time of the root of the cluster to which the event belongs.
\end{itemize}
Now, set $a>1$ s.t. $a\gamma<1$, fix a finite coordinate set $J\subset[d]$ with $|J|\le 2$ and denote the type-$j$ events at time $t$ in a cluster $\mathcal C$ as $(j,t)\in\mathcal C$. Define the type weights $q_v^{(a)}$ as
\begin{equation}
\label{eq:q-weights-main}
q_v^{(a)}:=\sum_{\ell=0}^{\infty} a^{\ell}\sum_{j\in J}(K^{\ell})_{jv},
\qquad v=1,\dots,d.
\end{equation}
Note that $q_v^{(a)}=\ind\{v\in J\}+a\sum_{i=1}^d K_{iv}q_i^{(a)}$, thus $q_v^{(a)}\ge 1$ for every $v\in J$. Define also:
\begin{equation}
\label{eq:ah-bounds-main}
a_h(t):=\sum_{r=0}^{\infty} e^{-\beta(rh-t)}\textbf{1}\{t\le rh\}\le C_h\times
\begin{cases}
e^{\beta t}, & t<0,\\
1, & t\ge 0,
\end{cases}
\qquad\quad\quad
C_h:=\frac{e^{\beta h}}{1-e^{-\beta h}},
\end{equation}
where the inequality follows from a simple calculation. Lastly, the weighted local cluster size is:
\begin{equation}
\label{eq:H-cluster-main}
H_J(\mathcal C):=\sum_{(j,t)\in \mathcal C,\, j\in J} a_h(t)+\sum_{(j,t)\in \mathcal C,\, j\in J} \textbf{1}\{t\ge 0\}.
\end{equation}
Because $q_j^{(a)}\ge 1$ on $J$ and $a_h(t)\le C_h$, defining $W_J^{(a)}(\mathcal C):=\sum_{(j,t)\in \mathcal C} q_j^{(a)}$, it holds
\begin{equation}
\label{eq:H-vs-W-main}
H_J(\mathcal C)
\le (C_h+1)\sum_{(j,t)\in \mathcal C,\,j\in J}1
\le (C_h+1)W_J^{(a)}(\mathcal C).
\end{equation}
The following lemma shows that the exponential moment of the weighted cluster size is uniformly bounded, regardless of the dimension $d$ and the network configuration.
\begin{lemma}
\label{lem:A}
There exist constants $\eta_A>0$ and $C_A>0$, depending only on $(h,R,\beta,\gamma)$, such that for every $d$, every $B\in\mathcal B_{d,k}$, every $J\subset[d]$ with $|J|\le 2$, and every root type $v$,
\begin{equation}
\label{eq:lemmaA-main}
    \mathbb E_v\big[e^{\eta_A H_J(\mathcal C)}\big]\le C_A .
\end{equation}
\end{lemma}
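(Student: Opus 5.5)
By \eqref{eq:H-vs-W-main} we have $H_J(\mathcal C)\le (C_h+1)W_J^{(a)}(\mathcal C)$. It therefore suffices to find $\eta_W>0$ and $C_W<\infty$, depending only on $(\beta,\gamma)$ and the choice of $a$, such that $\E_v[e^{\eta_W W_J^{(a)}(\mathcal C)}]\le C_W$ uniformly in $d$, $B$, $J$ and the root type $v$. We then set $\eta_A:=\eta_W/(C_h+1)$, so that $\eta_A$ depends on $h$ through $C_h$. The plan is to exploit the branching structure. The cluster rooted at a type-$v$ event consists of the root together with independent subclusters, one for each direct child. The children of types $i=1,\dots,d$ arrive as independent Poisson numbers with means $K_{iv}=\theta_{iv}/\beta$, since the total mass of $\theta_{iv}e^{-\beta t}dt$ is $\theta_{iv}/\beta$. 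Hence the moment generating function $\phi_v(\eta):=\E_v[e^{\eta W(\mathcal C)}]$, where $W=W_J^{(a)}$, satisfies the fixed-point relation
\[
\phi_v(\eta)=e^{\eta q_v}\exp\Bigl(\sum_{i}K_{iv}\bigl(\phi_i(\eta)-1\bigr)\Bigr).
\]

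The key step is a comparison argument using the weights $q^{(a)}$ as a supersolution. The candidate bound is $\phi_i(\eta)\le e^{2\eta q_i}$ for all $i$, valid for small $\eta$. Plugging this into the right-hand side and using $e^{x}-1\le x e^{x}$ gives $\sum_i K_{iv}(e^{2\eta q_i}-1)\le 2\eta e^{2\eta\|q\|_\infty}\sum_i K_{iv}q_i$. The identity $q_v=\ind\{v\in J\}+a\sum_iK_{iv}q_i$ gives $\sum_iK_{iv}q_i\le q_v/a$. The right-hand side is therefore at most $\exp\bigl(\eta q_v(1+2e^{2\eta\|q\|_\infty}/a)\bigr)$, and this is $\le e^{2\eta q_v}$ as soon as $2e^{2\eta\|q\|_\infty}\le a$. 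The admissible range of $\eta$ thus requires a uniform bound on $\|q\|_\infty$. Here I would use column sums: under the paper's bound $\|K\|_\infty\le\gamma$ the row sums are controlled, but $q_v$ involves $\sum_{j\in J}(K^\ell)_{jv}$, which is a row of $K^\ell$ restricted to $J$. Since $(K^\ell)_{jv}\le \|K^\ell\|_\infty\le\gamma^\ell$, we get $q_v\le |J|\sum_\ell (a\gamma)^\ell\le 2/(1-a\gamma)$, uniformly in $d$, $B$ and $v$. So $\eta_W:=\frac{1-a\gamma}{4}\log(a/2)$ works, provided we choose $a\in(2,1/\gamma)$. If instead one must keep $a$ close to $1$, I would replace the factor $2$ by $1+\epsilon$ with $(1+\epsilon)/a<1$ appropriately, i.e.\ use the ansatz $\phi_i\le e^{b\eta q_i}$ with $1+b e^{b\eta\|q\|_\infty}/a\le b$, which is solvable for $b$ large when $a>1$ and $\eta$ is small.

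The main obstacle is rigor: the fixed-point relation alone does not imply the bound, since the MGF might be infinite. I would handle this by truncating at generation $L$. Let $W^{(L)}$ denote the weight of the first $L$ generations. It satisfies the same recursion with $\phi^{(L)}$ on the left and $\phi^{(L-1)}$ on the right, and $\phi^{(0)}_v=e^{\eta q_v}\le e^{2\eta q_v}$. Induction on $L$ then gives $\phi_v^{(L)}\le e^{2\eta q_v}$ for every $L$, and monotone convergence as $L\to\infty$ gives $\phi_v(\eta_W)\le e^{2\eta_W\|q\|_\infty}=:C_W$. Note that the cluster is almost surely finite because $\rho(K)<1$. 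Combining with \eqref{eq:H-vs-W-main} yields \eqref{eq:lemmaA-main} with $C_A:=C_W$. Finally, $R$ does not actually enter, and the dependence on $h$ arises only through $C_h$.
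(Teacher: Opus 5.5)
Your proposal is correct and follows essentially the same route as the paper. Both arguments reduce to $W_J^{(a)}$ via \eqref{eq:H-vs-W-main}, induct on the generation-truncated moment generating function recursion using $\sum_i K_{iv}q_i\le q_v/a$ and $q_v\le 2/(1-a\gamma)$, pass to the limit by monotone convergence, and rescale $\eta$ by $C_h+1$.

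The only differences are these. You carry the inductive hypothesis in exponential form, $\phi_i\le e^{b\eta q_i}$, whereas the paper uses the linear form $F_i-1\le A\eta q_i$. A workable choice is $b=2/(1-a^{-1})$, which is exactly the paper's $A$. Your first concrete choice $b=2$ needs $a>2$, hence $\gamma<1/2$, so it is your general-$b$ version that covers the full range $\gamma<1$.
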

\begin{proof}
Choose $a\in(1,\gamma^{-1})$ and abbreviate $q_v:=q_v^{(a)}$. By \eqref{eq:H-vs-W-main}, it suffices to prove the Lemma for $W_J^{(a)}(\mathcal C)$. First of all note that: 
\begin{equation}
    \label{eq:Qqv}
    0\le q_v^{(a)}\le \sum_{\ell=0}^{\infty} a^{\ell}\sum_{j\in J}\|K\|_{\infty}^\ell\le |J|\sum_{\ell=0}^{\infty} (a\gamma)^{\ell}= \frac{|J|}{1-a\gamma}\le \frac{2}{1-a\gamma}=:Q.
\end{equation}
Let $W_{J,v}^{(a)}$ denote the weighted total progeny when the root has type $v$, and let $W_{J,v}^{(a,m)}$ be the same quantity truncated at generation depth at most $m\ge 0$. Similarly, if root $v$ has $M_i$ type-$i$ offspring (note that $M_i$ is Poisson with parameter $K_{iv}$), denote $W_{J,i,k}^{(a,m)}$ the truncated weighted total progeny generated by type-$i$ offspring. Then defining $F_v^{(m)}(\eta):=\mathbb E_v\big[e^{\eta W_{J,v}^{(a,m)}}\big]$, 
\begin{align*}
    F_v^{(m+1)}(\eta)={ }&  \mathbb E_v\left[ e^{\eta \left(q_v+\sum_{i=1}^d\sum_{k=1}^{M_i}W_{J,i,k}^{(a,m)} \right)} \right]=e^{\eta q_v}\prod_{i=1}^d\mathbb E_v\left[ \mathbb E\left[ \prod_{k=1}^{M_i}e^{\eta W_{J,i,k}^{(a,m)}}\biggl|M_i=m_i  \right] \right]=\\
    ={ }& e^{\eta q_v}\prod_{i=1}^d\mathbb E_v\left[ \left(F^{(m)}_i(\eta)\right)^{M_i}\right] =  e^{\eta q_v}\prod_{i=1}^d e^{-K_{iv}}\sum_{m_i=0}^{\infty} \frac{\left(F_i^{(m)} K_{iv}\right)^{m_i}}{m_i!}=\\
    ={ }& \exp\left(\eta q_v + \sum_{i=1}^d K_{iv}\bigl(F_i^{(m)}(\eta)-1\bigr)\right),
\end{align*}
where $F_v^{(0)}(\eta)=e^{\eta q_v}$. Set $A:=2/(1-a^{-1})>1$, then $A(1-a^{-1})>1$ and $\exists x_0>0$ depending only on $a$, such that
\begin{equation}
\label{eq:Ax-main}
e^x-1\le A\frac{x}{1+A/a}
\qquad\text{for every }0\le x\le x_0.
\end{equation}
Now choose $\eta_A>0$ such that $\eta_A Q(1+A/a)\le x_0$. This choice, for $m=0$ and \eqref{eq:Qqv}, implies $\eta_A q_v\le x_0$ and therefore $F_v^{(0)}(\eta_A)-1=e^{\eta_A q_v}-1\le A\eta_A q_v$. Assume that the same inequality holds for a fixed $m>0$ and $\forall v$ as inductive step; using the obtained expression for $F_v^{(m+1)}(\eta)$,  $\sum_{i=1}^d K_{iv}q_i\le  q_v/a$ and the same reasoning used for the zero-step, it holds
\[
F_v^{(m+1)}(\eta_A)
\le
\exp\!\left(\eta_A q_v + A\eta_A\sum_{i=1}^d K_{iv}q_i\right)
\le
\exp\!\left(\eta_A q_v\Bigl(1+\frac{A}{a}\Bigr)\right)\le 1+A\eta_Aq_v,
\]
concluding the induction. Since $W_{J,v}^{(a,m)}\uparrow W_{J,v}^{(a)}$ almost surely as $m\to\infty$, monotone convergence yields $\mathbb E_v\big[e^{\eta_A W_J^{(a)}(\mathcal C)}\big] \le 1+A\eta_A Q =:C_A'<\infty$,
uniformly in $(d,B,J,v)$. Combining with \eqref{eq:H-vs-W-main} and shrinking $\eta_A$ by the factor $(C_h+1)$ and renaming it proves \eqref{eq:lemmaA-main}.
\end{proof}
The proof of Lemma~\ref{lem:A} gives a little more than the uniform bound stated there: monotone convergence in previous proof's inductive step yields
\begin{equation}
\label{eq:exp-minus-one-q-main}
\E_v\big[e^{\eta_A W_{J,v}^{(a)}}-1\big]\le A\eta_A q_v^{(a)}.
\end{equation}
Moreover, by the row-sum bound,
\begin{equation}
\label{eq:q-sum-main}
\sum_{v=1}^d \mu_v q_v^{(a)}
\le \mu_+\sum_{\ell=0}^{\infty} a^{\ell}\sum_{j\in J}\sum_{v=1}^d (K^{\ell})_{jv} \le  \mu_+\sum_{\ell=0}^{\infty} a^{\ell}\sum_{j\in J}\|K\|_{\infty}^\ell\le 2\mu_+\sum_{\ell=0}^{\infty}(a\gamma)^{\ell}
\le C_q.
\end{equation}
Consequently, for every integer $m\ge 1$ there exists $\eta_m>0$ such that
\begin{equation}
\label{eq:agg-W-moment-main}
\sum_{v=1}^d \mu_v\,\E_v\big[(W_{J,v}^{(a)})^m e^{\eta_m W_{J,v}^{(a)}}\big]\le C_m,
\end{equation}
where $C_m>0$ depends only on $(m,\beta,\mu_+,k,w_+,\gamma)$.
Indeed, choose $\eta_m>0$ such that $2\eta_m\le \eta_A$ and use the pointwise bound $x^m e^{\eta_m x}\le C_{m,\eta_m}(e^{\eta_A x}-1)$, valid for every $m\ge 1$, together with \eqref{eq:exp-minus-one-q-main} and \eqref{eq:q-sum-main}. For $m=0$ one simply uses \eqref{eq:lemmaA-main} directly. We now use this result to prove the following lemma, which controls how long a cluster keeps producing events of the types we are interested in (for example $J$). Specifically, it shows that the probability of a cluster still having type-$J$ events at a late age $u$ decays exponentially in $u$, uniformly over all dimensions and all network configurations.
\begin{lemma}
\label{lem:relevant-lifetime-tail}
Fix $J\subset[d]$ with $|J|\le 2$. For one cluster started from a root of type $v$ at time $0$, let
\[
L_J:=\sup\{u\ge 0:\ \text{there exists a type-}j\in J\text{ event in the full cluster at age }u\},
\]
with the convention $\sup\varnothing=0$. Thus the root itself is counted at age $0$ when $v\in J$. Then there exist constants $a\in(1,\gamma^{-1})$, $c_L>0$, and $C_L>0$, depending only on $(\beta,k,w_+,\gamma)$, such that for every $u\ge 0$,
\begin{align}
\Pbb_v(L_J\ge u)&\le q_v^{(a)} e^{-c_L u}, \label{eq:lifetime-tail-pointwise-main}\\
\sum_{v=1}^d \mu_v\Pbb_v(L_J\ge u)&\le C_L e^{-c_L u}. \label{eq:lifetime-tail-agg-main}
\end{align}
\end{lemma}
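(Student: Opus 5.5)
The plan is to bound the probability that a late type-$J$ event exists by the expected number of such events, and to control that expectation generation by generation using the weights $q^{(a)}$ together with the exponential decay of the kernel. Write $\mathcal E_J(u)$ for the number of type-$j\in J$ events of the cluster at age $\ge u$. Then $\Pbb_v(L_J\ge u)\le \E_v[\mathcal E_J(u)]$, and it suffices to show
\[
\E_v[\mathcal E_J(u)]\le q_v^{(a)}e^{-c_L u}.
\]
The second bound \eqref{eq:lifetime-tail-agg-main} then follows at once by summing against $\mu_v$ and invoking \eqref{eq:q-sum-main}, with $C_L:=C_q$.

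The first step is to compute the first moment by generation. A generation-$\ell$ descendant of type $j$ has expected count $(K^\ell)_{jv}$. Its age is a sum of $\ell$ independent inter-generation delays. Each delay has density $\beta e^{-\beta t}$: the offspring intensity $\theta_{ij}e^{-\beta t}$, normalized by its mass $\theta_{ij}/\beta=K_{ij}$, is $\mathrm{Exp}(\beta)$, independently of the types. Hence the age of a generation-$\ell$ event is $\Gamma(\ell,\beta)$-distributed and independent of the type path, which gives
\[
\E_v[\mathcal E_J(u)]=\sum_{\ell\ge0}\sum_{j\in J}(K^\ell)_{jv}\,\Pbb(\Gamma_\ell\ge u),
\qquad \Gamma_\ell\sim\Gamma(\ell,\beta),\ \Gamma_0\equiv 0 .
\]
A small amount of care is needed for $\ell=0$. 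There the term is $\ind\{v\in J\}\ind\{u\le 0\}$, which is consistent with the claim at $u=0$.

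The second step is a Chernoff bound that trades the gamma tail for the factor $a^\ell$. For $0<s<\beta$ one has $\Pbb(\Gamma_\ell\ge u)\le e^{-su}(\beta/(\beta-s))^\ell$. Fix $a\in(1,\gamma^{-1})$ and choose $s:=\beta(1-1/a)$, so that $\beta/(\beta-s)=a$. Then
\[
\E_v[\mathcal E_J(u)]\le e^{-su}\sum_{\ell}a^\ell\sum_{j\in J}(K^\ell)_{jv}=q_v^{(a)}e^{-su},
\]
which is \eqref{eq:lifetime-tail-pointwise-main} with $c_L:=\beta(1-a^{-1})$. For definiteness I would take $a=(1+\gamma^{-1})/2$, which is admissible because $\gamma<1$. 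This makes all constants depend only on $(\beta,\gamma)$, and through \eqref{eq:q-sum-main} on $\mu_+$; the dependence on $k,w_+$ enters via $\gamma$. Since the claim allows $C_L$ to depend on $\mu_+$ as well, I would state this explicitly.

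The main obstacle is justifying the gamma-age representation rigorously in the multitype setting. I would derive it by induction on the generation using the Poisson offspring structure. Conditionally on its parent, each child's delay has density $\beta e^{-\beta t}$ whatever the child's type. The expected number of generation-$(\ell+1)$ type-$i$ events with age in $dt$ is therefore $\sum_j K_{ij}$ times the convolution of the generation-$\ell$ type-$j$ age measure with $\mathrm{Exp}(\beta)$. This factorizes as $(K^{\ell+1})_{iv}$ times the $\Gamma(\ell+1,\beta)$ law. Exchanging the sum over generations with the expectation is justified by Tonelli, and the resulting series converges because $\sum_\ell (a\gamma)^\ell<\infty$, as in \eqref{eq:Qqv}.
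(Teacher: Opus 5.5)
Your proposal is correct and follows the paper's proof essentially step for step. Like the paper, you bound $\Pbb_v(L_J\ge u)$ by the expected number of type-$J$ events at age $\ge u$, expand this by generation with multiplicities $(K^\ell)_{jv}$ and $\Gamma(\ell,\beta)$ ages, apply the Chernoff bound with $c_L=\beta(1-a^{-1})$ so that $\beta/(\beta-c_L)=a$, and sum against $\mu_v$ using \eqref{eq:q-sum-main}. Two small points are accurate: your unified treatment of the root via $\Gamma_0\equiv 0$ replaces the paper's separate $u=0$ case, and $C_L$ does inherit a dependence on $\mu_+$ through $C_q$.
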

\begin{proof}
Set $c_L:=\beta(1-a^{-1})>0$. Let $N_J([u,\infty))$ denote the number of type-$J$ events in the full cluster whose ages lie in $[u,\infty)$, again counting the root when applicable. Then
\[
\Pbb_v(L_J\ge u)=\mathbb E _v[\textbf{1}\{L_J\geq u\}]\le \mathbb E_v\big[N_J([u,\infty))\big].
\]
For $u=0$, the root contributes exactly when $v\in J$, so
\[
\E_v\big[N_J([0,\infty))\big]
=\ind\{v\in J\}+\sum_{\ell\ge 1}\sum_{j\in J}(K^{\ell})_{jv}
\le q_v^{(a)},
\]
because $a^\ell\ge 1$ and $q_v^{(a)}$ contains the $\ell=0$ root term explicitly. For $u>0$, the root does not contribute, and a type-$j\in J$ offspring in generation $\ell\ge 1$ has age distributed as $S_\ell:=E_1+\cdots+E_\ell$ with $E_m\stackrel{\mathrm{i.i.d.}}{\sim}\mathrm{Exp}(\beta)$ and expected multiplicity $(K^{\ell})_{jv}$. Hence
\[
\E_v\big[N_J([u,\infty))\big]
=\sum_{\ell\ge 1}\sum_{j\in J}(K^{\ell})_{jv}\Pbb(S_\ell\ge u).
\]
By Chernoff's bound,
\[
\Pbb(S_\ell\ge u)
\le e^{-c_L u}\Bigl(\frac{\beta}{\beta-c_L}\Bigr)^{\ell}
=e^{-c_L u} a^{\ell}.
\]
Therefore, for $u>0$,
\[
\Pbb_v(L_J\ge u)
\le e^{-c_L u}\sum_{\ell\ge 1}a^{\ell}\sum_{j\in J}(K^{\ell})_{jv}
\le q_v^{(a)} e^{-c_L u}.
\]
Together with the $u=0$ case this proves \eqref{eq:lifetime-tail-pointwise-main}. Summing against $\mu_v$ and using \eqref{eq:q-sum-main} gives \eqref{eq:lifetime-tail-agg-main}.
\end{proof}
We are now ready to prove the next lemma, which combines (i) the exponential decay in $u$ of the probability that a cluster contains a type-$J$ event at age larger than $u$ (Lemma~\ref{lem:relevant-lifetime-tail}) and (ii) the uniform bound on the exponential moments of the total cluster size (equation \eqref{eq:agg-W-moment-main}). These two ingredients give finite moment bounds on the local cluster counts that do not depend on the dimension $d$. For real $u$, let $N_{J,u}^{\rm tail}$ be the number of type-$J$ events in the full cluster with ages in $[u,\infty)$, and let $N_{J,u}^{\rm win}$ be the number of type-$J$ events in the truncated unit window $[u,u+1)\cap[0,\infty)$.
\begin{lemma}
\label{lem:local-count-moments}
Fix $J\subset[d]$ with $|J|\le 2$. Then there exist constants $\eta_C>0$, $c_{cl}>0$, and $C_{cl}>0$, depending only on $(\beta,\mu_+,k,w_+,\gamma)$, such that for every $u\ge 0$ and every $s\ge -1$,
\begin{align}
\sum_{v=1}^d \mu_v\,\mathbb E_v\big[(N_{J,u}^{\rm tail})^2 e^{\eta_C N_{J,u}^{\rm tail}}\big]&\le C_{cl} e^{-c_{cl} u}, \quad \sum_{v=1}^d \mu_v\,\mathbb E_v\big[(N_{J,s}^{\rm win})^2 e^{\eta_C N_{J,s}^{\rm win}}\big]\le C_{cl} e^{-c_{cl} (s)_+}. 
\label{eq:tail-count-moment-main} 
\end{align}
\end{lemma}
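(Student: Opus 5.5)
The plan is to combine the lifetime tail of Lemma~\ref{lem:relevant-lifetime-tail} with the aggregated exponential moment bound \eqref{eq:agg-W-moment-main} through a Cauchy--Schwarz splitting. Both $N_{J,u}^{\rm tail}$ and $N_{J,s}^{\rm win}$ are dominated by the total number of type-$J$ events in the cluster. That number is in turn at most $W_{J,v}^{(a)}$, since $q_j^{(a)}\ge 1$ on $J$ and $q^{(a)}\ge 0$ elsewhere. Moreover, each count vanishes unless the cluster contains a type-$J$ event at age at least $u$ (respectively at least $s_+$), that is, unless $L_J\ge u$ (respectively $L_J\ge s_+$).

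The key pointwise inequality I would aim for is therefore
\[
(N_{J,u}^{\rm tail})^2e^{\eta_C N_{J,u}^{\rm tail}}\le (W_{J,v}^{(a)})^2e^{\eta_C W_{J,v}^{(a)}}\,\ind\{L_J\ge u\},
\]
with the analogous bound for the window count with $u$ replaced by $s_+$. For $s\in[-1,0)$ the indicator is dropped, and the right-hand side $C_{cl}$ comes directly from \eqref{eq:agg-W-moment-main} with $m=2$. For general $u$, I apply Cauchy--Schwarz under $\Pbb_v$:
\[
\E_v\bigl[(W_{J,v}^{(a)})^2e^{\eta_C W_{J,v}^{(a)}}\ind\{L_J\ge u\}\bigr]\le \E_v\bigl[(W_{J,v}^{(a)})^4e^{2\eta_C W_{J,v}^{(a)}}\bigr]^{1/2}\,\Pbb_v(L_J\ge u)^{1/2}.
\]
Then I sum against $\mu_v$ and use Cauchy--Schwarz for the sum, writing $\mu_v=\mu_v^{1/2}\mu_v^{1/2}$. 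This gives
\[
\Bigl(\sum_v\mu_v\E_v[(W_{J,v}^{(a)})^4e^{2\eta_C W_{J,v}^{(a)}}]\Bigr)^{1/2}\Bigl(\sum_v\mu_v\Pbb_v(L_J\ge u)\Bigr)^{1/2}.
\]
The first factor is bounded by $C_4^{1/2}$ from \eqref{eq:agg-W-moment-main} with $m=4$, provided $2\eta_C\le\eta_4$. The second factor is at most $C_L^{1/2}e^{-c_Lu/2}$ by \eqref{eq:lifetime-tail-agg-main}. So I would take $\eta_C:=\eta_4/2$, $c_{cl}:=c_L/2$ and $C_{cl}:=\max\{(C_4C_L)^{1/2},C_2\}$. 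The window case is identical with $u$ replaced by $s_+$: every event counted in $N_{J,s}^{\rm win}$ has age at least $s_+$, so $N_{J,s}^{\rm win}=0$ on $\{L_J<s_+\}$.

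The main point to check is that the same $a\in(1,\gamma^{-1})$ is used in Lemma~\ref{lem:relevant-lifetime-tail} and in \eqref{eq:agg-W-moment-main}. Then the weights $q^{(a)}$ and all constants depend only on $(\beta,\mu_+,k,w_+,\gamma)$, and not on $h$ or $R$. I would fix $a$ as in Lemma~\ref{lem:relevant-lifetime-tail}. Note that \eqref{eq:agg-W-moment-main} concerns $W$ itself rather than $H_J$, so it does not carry the $(C_h+1)$ factor from Lemma~\ref{lem:A}.

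A secondary technical point is that \eqref{eq:agg-W-moment-main} is only asserted with some $\eta_m$ depending on $m$. I need it for $m=4$ with exponent $2\eta_C$, which is handled by the choice $\eta_C:=\eta_4/2$ above.
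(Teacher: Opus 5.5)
Your proposal is correct and is essentially the paper's proof: you bound both counts by the total type-$J$ count (hence by $W_{J,v}^{(a)}$), restrict to $\{L_J\ge u\}$, and apply Cauchy--Schwarz under $\sum_v\mu_v\Pbb_v$ using the $m=4$ case of \eqref{eq:agg-W-moment-main} together with \eqref{eq:lifetime-tail-agg-main}. The one difference is that you treat $s\in[-1,0)$ directly through the $m=2$ bound instead of the paper's reduction $N_{J,s}^{\rm win}\le N_{J,0}^{\rm win}$; this also requires $\eta_C\le\eta_2$, so take $\eta_C=\min\{\eta_4/2,\eta_2\}$.

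That direct $m=2$ bound is also the safer way to handle $u=0$ and $s=0$. With the convention $\sup\varnothing=0$ one has $\Pbb_v(L_J\ge 0)=1$, so \eqref{eq:lifetime-tail-agg-main} cannot hold uniformly in $d$ at $u=0$. Alternatively, you can use $\ind\{N_{J,u}^{\rm tail}>0\}$ together with $\Pbb_v(N_{J,u}^{\rm tail}>0)\le\E_v[N_{J,u}^{\rm tail}]\le q_v^{(a)}e^{-c_Lu}$, which is valid for all $u\ge0$.
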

\begin{proof}
Let $N_J^{\rm tot}$ denote the total number of type-$J$ events in the whole cluster, root included when $v\in J$. Since $N_J^{\rm tot}=\sum_{(j,t)\in\mathcal C,j\in J}1\le\sum_{(j,t)\in\mathcal C}q_j^{(a)}= W_{J,v}^{(a)}$, \eqref{eq:agg-W-moment-main} with $m=4$ yields:
\begin{equation}
\label{eq:agg-total-count-fourth-main}
\sum_{v=1}^d \mu_v\,\E_v\big[(N_J^{\rm tot})^4 e^{2\eta_C N_J^{\rm tot}}\big]\le C
\end{equation}
for some sufficiently small $\eta_C>0$.

Now $N_{J,u}^{\rm tail}\le N_J^{\rm tot}$ and the event $\{N_{J,u}^{\rm tail}>0\}\subset \{L_J\ge u\}$. Hence, by Cauchy--Schwarz,
\begin{align*}
\sum_{v=1}^d \mu_v\,\mathbb E_v\big[(N_{J,u}^{\rm tail})^2 e^{\eta_C N_{J,u}^{\rm tail}}\big]
&\le \sum_{v=1}^d \mu_v\,\mathbb E_v\big[(N_J^{\rm tot})^2 e^{\eta_C N_J^{\rm tot}}\,\textbf{1}\{L_J\ge u\}\big]\\
&\le \left(\sum_{v=1}^d \mu_v\,\mathbb E_v\big[(N_J^{\rm tot})^4 e^{2\eta_C N_J^{\rm tot}}\big]\right)^{1/2}
\left(\sum_{v=1}^d \mu_v\Pbb_v(L_J\ge u)\right)^{1/2}\\
&\le C e^{-c_{cl} u},
\end{align*}
using \eqref{eq:lifetime-tail-agg-main} and after decreasing $c_{cl}>0$ if necessary. For the window counts, first take $s\ge 0$. Then $N_{J,s}^{\rm win}\le N_J^{\rm tot}$ and $\{N_{J,s}^{\rm win}>0\}\subset\{L_J\ge s\}$, so the same argument gives
\[
\sum_{v=1}^d \mu_v\,\E_v\big[(N_{J,s}^{\rm win})^2 e^{\eta_C N_{J,s}^{\rm win}}\big]\le C e^{-c_{cl} s}.
\]
If $-1\le s<0$, then $N_{J,s}^{\rm win}\le N_{J,0}^{\rm win}$ because $[s,s+1)\cap[0,\infty)=[0,s+1)\subset[0,1)$, so the same bound follows from the already proved case $s=0$. 
\end{proof}
From now on, we consider independent root time-blocks: for every integer $q\in\mathbb Z$, let $I_q:=[qh,(q+1)h)$ and $U_q:=\text{all clusters rooted in }I_q$. The sequence $(U_q)_{q\in\mathbb Z}$ is independent (note that the observed bin variables $(\xi_0,\dots,\xi_{n-1})$, where $\xi_r$ is any in \eqref{eq:explicit-family}, are \emph{not} independent) and consequently define the filtration $\mathcal F_q:=\sigma(U_p:\ p\le q)$. Thus, for any statistic from \eqref{eq:explicit-family}, the random variable
\begin{equation}
    \label{eq:s_nstatistic}
    S_n:=\sum_{r=0}^{n-1}(\xi_r-\mathbb E\xi_0)
\end{equation}
is measurable with respect to $(U_q)_{q\le n-1}$. For one root born at $x=(v,t)$, let $\mathcal C_x$ denote its full cluster, including the root itself. For $r=0,\dots,n-1$, the family-specific one-cluster envelopes are:
\begin{itemize}
    \item for a type-$j$, $B_j^x(r):=\sum_{\substack{e\in \mathcal C_x, \mathrm{type}(e)=j,\, s_e\le rh}} e^{-\beta(rh-s_e)}$,
    \item for a type-$i$, $A_i^x(r):=\textbf{1}\{\mathcal C_x \text{ contributes at least one type-}i\text{ event in }(rh,(r+1)h]\}.$
\end{itemize}
Set also:
\begin{align}
\Gamma_{Z_j}(x)   &\coloneqq \sum_{r=0}^{n-1} B_j^x(r), & \Gamma_{Y_i}(x)   &\coloneqq \sum_{r=0}^{n-1} A_i^x(r) \label{eq:Gamma-Z-Y-main} \\
\Gamma_{Z_jY_i}(x) &\coloneqq 2\Gamma_{Z_j}(x)+R\Gamma_{Y_i}(x), & \Gamma_{Z_aZ_b}(x) &\coloneqq 2R\Gamma_{Z_a}(x)+2R\Gamma_{Z_b}(x). \label{eq:Gamma-ZY-ZZ-main}
\end{align}
\begin{lemma}
\label{lem:one-cluster-add-one}
Fix one sequence $(\xi_r)$ from \eqref{eq:explicit-family}. Let $x=(v,t)$ and let $\mathcal C_x$ be one cluster rooted at $x$. For every fixed configuration of all other blocks, adding $\mathcal C_x$ changes the centered statistic by at most
\[
\bigl|S_n(\mathcal C_x)-S_n(0)\bigr|\le \Gamma_\xi(x,\mathcal C_x),
\]
where $S_n(0)$ denotes the value obtained when that one cluster is absent.
\end{lemma}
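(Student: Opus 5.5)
Adding a cluster changes the centered statistic $S_n$ only through the uncentered sum $\sum_r\xi_r$, since $\E\xi_0$ is a constant. So the plan is to bound $\sum_{r=0}^{n-1}|\xi_r(\mathcal C_x)-\xi_r(0)|$ bin by bin, one observable family at a time, and then sum over $r$. The key monotonicity fact is that adding a cluster only adds events: with the rest of the configuration fixed, the state $U_{j,r}=X_j(rh)$ increases by exactly $B_j^x(r)$, because $X_j(rh)=\sum_{e:\,\mathrm{type}(e)=j,\,s_e\le rh}e^{-\beta(rh-s_e)}$ is additive over clusters. Likewise, the indicator $Y_{i,r}^{(h)}$ can change only if $\mathcal C_x$ has a type-$i$ event in $(rh,(r+1)h]$, and then by at most $1=A_i^x(r)$.

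First, for $\xi_r=Z_{j,r}^{(R)}$: $\psi_R$ is $1$-Lipschitz and nondecreasing, so $0\le Z_{j,r}^{(R)}(\mathcal C_x)-Z_{j,r}^{(R)}(0)\le B_j^x(r)$. Summing gives the bound $\Gamma_{Z_j}(x)$, and similarly $\Gamma_{Y_i}(x)$ for $\xi_r=Y_{i,r}^{(h)}$.

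For the product families, use the telescoping identity $ab-a'b'=(a-a')b+a'(b-b')$ together with the boundedness $0\le Z\le R$ and $Y\in\{0,1\}$. For $Z_jY_i$ this gives
\[
|Z_jY_i-Z_j'Y_i'|\le |Z_j-Z_j'|\cdot 1+R\,|Y_i-Y_i'|\le B_j^x(r)+R\,A_i^x(r),
\]
which sums to at most $\Gamma_{Z_jY_i}(x)$; the factor $2$ in the definition of $\Gamma_{Z_jY_i}$ leaves slack. For $Z_aZ_b$,
\[
|Z_aZ_b-Z_a'Z_b'|\le R\,B_a^x(r)+R\,B_b^x(r),
\]
which is within $\Gamma_{Z_aZ_b}(x)$.

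Most of the argument is routine. The step needing care is the claim that the other clusters' contributions are unaffected by adding $\mathcal C_x$. In the Poisson cluster representation the clusters are generated independently: each cluster's offspring are drawn from that cluster's own events, not from the aggregate intensity. So the superposition is additive and "adding one cluster" is well-defined with everything else held fixed. I would state this explicitly, note that $B_j^x(r)$ includes the root when $v=j$ (consistent with $X_j$ being right-continuous at event times, with $s_e\le rh$), and then conclude $|S_n(\mathcal C_x)-S_n(0)|\le\Gamma_\xi(x,\mathcal C_x)$.
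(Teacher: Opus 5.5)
Your proposal is correct and follows essentially the same route as the paper. In both, the deterministic centering cancels and clusters superpose additively, so $X_j(rh)$ shifts by exactly $B_j^x(r)$. The $1$-Lipschitz clipping $\psi_R$ transfers this shift to $Z_{j,r}^{(R)}$, $Y_{i,r}^{(h)}$ changes by at most $A_i^x(r)$, and the product families follow from $0\le Z\le R$, $Y\in\{0,1\}$ before summing over $r$. Your telescoping bounds $B_j^x(r)+R\,A_i^x(r)$ and $R\,B_a^x(r)+R\,B_b^x(r)$ are slightly sharper than the paper's $2|\delta|+R|a|$ and $2R|\delta_a|+2R|\delta_b|$, so they fit within the definitions of $\Gamma_{Z_jY_i}$ and $\Gamma_{Z_aZ_b}$ with room to spare.
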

\begin{proof}
First of all, note that the centering is deterministic and cancels when taking the difference, so we will not consider it. We can prove the claim for each of the element from \eqref{eq:explicit-family}:
\begin{itemize}
\item \emph{$\xi=Z_{j,r}^{(R)}$.} $\psi_R$ is 1-Lipschitz, so adding one type-$j$ event at time $s_e \le rh$ changes $X_j(rh)$ by exactly $e^{-\beta(rh-s_e)}$, and after clipping the change is at most this. Thus
\begin{align*}
    \left|\Delta_{\mathcal C_x}\sum_{r=0}^{n-1}Z_{j,r}^{(R)}\right| \le \sum_{e\in\mathcal C_x,\ \mathrm{type}(e)=j} \sum_{r=0}^{n-1} e^{-\beta(rh-s_e)}\cdot \textbf{1}\{s_e\le rh\} = \sum_{r=0}^{n-1} B_j^x(r)= \Gamma_{Z_j}(x,\mathcal C_x). 
\end{align*}
\item \emph{$\xi=Y_{i,r}^{(h)}$.} Note that $A_i^x(r)=0$ iff $\mathcal C_x$ has no type-$i$ offspring in $(rh,(r+1)h]$. Therefore removing $\mathcal C_x$ gives $|\Delta _{\mathcal C_x}Y_{i,r}^{(h)}|=0\le A_i^x(r)$. Now, note that $A_i^x(r)=1$ in two cases: the first one, if $\mathcal C_x$ is the only cluster having type-$i$ offspring in $(rh,(r+1)h]$. Therefore removing $\mathcal C_x$ gives $|\Delta _{\mathcal C_x}Y_{i,r}^{(h)}|=1\le A_i^x(r)$. The second one, if  $\mathcal C_x$ has type-$i$ offspring in $(rh,(r+1)h]$, but there exists $\mathcal C_y$ for some $y=(v',t')$ with type-$i$ offspring in $(rh,(r+1)h]$. Then, removing $\mathcal C_x$ has no impact and therefore $|\Delta _{\mathcal C_x}Y_{i,r}^{(h)}|=0\le A_i^x(r)$. Then the result follows immediately. 
\item \emph{$\xi=Z_{j,r}^{(R)}Y_{i,r}^{(h)}$.} Use the inequality $|(z+\delta)(y+a)-zy|\le 2|\delta|+R|a|$ for $y\ge0$, $y+a\le1$, $z\ge0$ and $z+\delta\le R$. Then $z+\delta$ plays the role of $Z_{j,r}^{(R)}$ when considering $\mathcal C_x$, while $z$ plays that role when it is not considered. Similarly, $y+a$  plays the role of $Y_{i,r}^{(h)}$ when considering $\mathcal C_x$, while $y$ plays that role when it is not considered. Therefore the result follows taking $\delta$ as $\Delta_{\mathcal C_x}Z_{j,r}^{(R)}$ and $a$ as $\Delta _{\mathcal C_x}Y_{i,r}^{(h)}$, and using the previous results.
\item \emph{$\xi=Z_{a,r}^{(R)}Z_{b,r}^{(R)}$.} Same as the previous step, but using $|(z_a+\delta_a)(z_b+\delta_b)-z_az_b|\le 2R|\delta_a| + 2R|\delta_b|$.
\end{itemize}
\end{proof}
Now we show an auxiliary lemma useful for proving the next one about conditional mgf of one block increment: 
\begin{lemma}
\label{lem:poisson-bd-external}
Let $(E,\mathcal E)$ be a standard Borel space, let $\eta$ be a Poisson point process on $E$ with intensity measure $\nu$, and let $F=F(\eta)$ be a measurable functional with $\E[F]=0$. Assume there exists a deterministic measurable function $g:E\to[0,\infty)$ such that for every configuration $\eta$ and every point $z\in E$,
\[
|D_zF(\eta)|:=|F(\eta+\delta_z)-F(\eta)|\le g(z).
\]
Assume moreover that for some $\eta_*>0$,
\[
\int_E g(z)^2 e^{\eta_* g(z)}\,\nu(dz)<\infty.
\]
Then there exist a constant $\lambda_\mathrm{P}>0$, depending only on $\eta_*$, such that
\begin{equation}
\label{eq:poisson-bd-external-main}
\E\big[e^{\lambda F}\big]\le \exp\!\left(\lambda^2 \int_E g(z)^2 e^{\eta_* g(z)}\,\nu(dz)\right)
\qquad (|\lambda|\le \lambda_\mathrm{P}).
\end{equation}
\end{lemma}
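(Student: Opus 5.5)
The plan is to prove the bound by the Herbst argument, fed by the modified logarithmic Sobolev inequality for Poisson functionals (Wu's inequality; see also Bobkov--Ledoux). For a Poisson point process $\eta$ on $E$ with intensity $\nu$ and a suitably integrable functional $G$, this inequality reads
\[
\operatorname{Ent}\bigl(e^{G}\bigr)\le \E\Bigl[e^{G}\int_E \phi\bigl(D_zG(\eta)\bigr)\,\nu(dz)\Bigr],
\qquad \phi(x):=xe^x-e^x+1 .
\]
I will apply it to $G=\lambda F$. This gives $D_zG=\lambda D_zF$ and hence $\abs{D_zG}\le \abs{\lambda}\, g(z)$. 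I will take $\lambda_{\mathrm P}:=\eta_*$; this depends only on $\eta_*$, as the statement requires.

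The first step is an elementary pointwise bound on $\phi$. Since $\phi(x)=\int_0^x te^t\,dt$, the function $\phi$ is nonnegative, vanishes at $0$, is decreasing on $(-\infty,0]$ and increasing on $[0,\infty)$. It therefore suffices to bound it at $x=\pm\abs{x}$:
\begin{itemize}
\item for $x\ge0$, $\phi(x)\le \int_0^x t e^x\,dt = x^2e^x/2$;
\item for $x\le0$, $\phi(x)\le x^2/2$.
\end{itemize}
Combining the two cases, $\phi(x)\le x^2e^{\abs{x}}$ for all $x$. Monotonicity of $\phi$ on each half-line then gives, for $\abs{\lambda}\le\eta_*$,
\[
\phi(\lambda D_zF)\le \lambda^2 g(z)^2e^{\abs{\lambda} g(z)}\le \lambda^2 g(z)^2 e^{\eta_* g(z)} .
\]
Plugging this into the inequality yields
\[
\operatorname{Ent}\bigl(e^{\lambda F}\bigr)\le \lambda^2 V\,\E\bigl[e^{\lambda F}\bigr],
\qquad V:=\int_E g^2e^{\eta_* g}\,d\nu<\infty .
\]

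The second step is the Herbst argument. Set $L(\lambda):=\log\E[e^{\lambda F}]$ and $K(\lambda):=L(\lambda)/\lambda$. The entropy bound is equivalent to $\lambda L'(\lambda)-L(\lambda)\le \lambda^2V$, that is, $K'(\lambda)\le V$ for $0<\lambda\le\eta_*$. Since $\E F=0$, we have $K(0^+)=L'(0)=0$. Integrating from $0$ then gives $L(\lambda)\le \lambda^2V$, which is \eqref{eq:poisson-bd-external-main} for $\lambda\ge0$. For negative $\lambda$ I will apply the same argument to $-F$, which satisfies the same add-one bound with the same $g$.

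The main obstacle is the integrability needed to make the argument rigorous. The modified log-Sobolev inequality and the differentiation of $L$ require $\E[e^{\lambda F}]<\infty$ a priori, and $F$ is not assumed bounded. I would first try to handle this by truncation:
\begin{itemize}
\item replace $F$ by $F_M:=(F\wedge M)\vee(-M)$, which is $1$-Lipschitz in $F$, so $\abs{D_zF_M}\le g(z)$ still holds;
\item run the argument above for $F_M-\E F_M$;
\item let $M\to\infty$, using Fatou's lemma and $\E F_M\to\E F=0$.
\end{itemize}
The last limit requires $F\in L^1$. This follows from the Poincaré inequality for Poisson functionals, which gives $\Var(F)\le\int g^2\,d\nu<\infty$. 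If needed, I would also restrict $\nu$ to an increasing sequence of sets of finite measure, so that the Poisson functional lives on finitely many points where the inequality is classical, and pass to the limit by martingale convergence. If the log-Sobolev route proves awkward, the fallback is the Clark--Ocone representation $F=\int\E[D_zF\mid\mathcal F_{z-}]\,\tilde\eta(dz)$, combined with the exponential supermartingale bound for compensated Poisson integrals with jumps bounded by $g$. That route yields the same estimate, with $\phi$ replaced by $e^x-1-x\le x^2e^{\abs{x}}$.
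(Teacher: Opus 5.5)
Your proposal is correct, and it takes a genuinely different route from the paper. The paper first truncates $F$ and transports $E$ to a Borel subset of $[0,1]$. It then realizes $\eta$ as the time-$1$ section of a Poisson process on $E\times[0,1]$. Using the predictable representation theorem, it writes the Doob martingale $M_s=\E[F\mid\mathcal G_s]$ as a compensated stochastic integral whose integrand is the averaged add-one cost, hence bounded by $g$. Finally it applies Itô's formula to $e^{\lambda M_t}$ and closes with Gronwall. This is essentially the Clark--Ocone fallback you mention at the end.

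Your main route replaces that stochastic calculus by one functional inequality, Wu's modified log-Sobolev inequality, followed by the Herbst argument. The pieces you supply all check out:
\begin{itemize}
\item the bound $\phi(x)\le x^2e^{\abs{x}}$ (in fact $\le x^2e^{\abs{x}}/2$);
\item the monotonicity step;
\item the identity $K'=(\lambda L'-L)/\lambda^2$ with $K(0^+)=0$ for the recentred truncation $F_M-\E F_M$.
\end{itemize}
Your truncation-plus-Fatou passage to unbounded $F$ is the same as the paper's.

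What your route buys: it is shorter and needs no Borel isomorphism, time augmentation or martingale representation. It also gives $\lambda_{\mathrm P}=\eta_*$, and even a factor $\tfrac12$ in the exponent, where the paper settles for $\eta_*/2$.

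What the paper's route buys: before specializing, it yields the Bennett-type bound $\E[e^{\lambda F}]\le\exp\bigl(\int_E(e^{\abs{\lambda}g}-1-\abs{\lambda}g)\,d\nu\bigr)$ for every real $\lambda$. You can recover exactly this bound from Herbst as well. Integrate $K'(s)\le s^{-2}\int_E\phi(sg)\,d\nu$ without first bounding $\phi$, and use $\frac{d}{ds}\bigl[s^{-1}(e^{sg}-1-sg)\bigr]=s^{-2}\phi(sg)$.

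One small remark: you do not need the Poincaré inequality to get $F\in L^1$. The hypothesis $\E[F]=0$ already presupposes integrability, and then $\E F_M\to0$ follows by dominated convergence, since $\abs{F_M}\le\abs{F}$.
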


\begin{proof}
For $K\ge 1$, let $F^{(K)}:=(-K)\vee(F\wedge K)-\mathbb E[(-K)\vee(F\wedge K)]$.
The clipping map is $1$-Lipschitz, so $|D_zF^{(K)}|\le g(z)$. It therefore suffices to prove the claim for bounded centered functionals and then let $K\uparrow\infty$. \\
Assume for the moment that $F$ is bounded. Take $B\subseteq[0,1]$ with $card(B)=card (E)$ and s.t. $B\in \mathcal B([0,1])$. Since $([0,1],\mathcal B([0,1]))$ is standard Borel, then Corollary (13.4) in \cite{kechris2012classical} holds and $(B, \mathcal B([0,1])|_{B})$ is standard Borel. By assumption $(E,\mathcal E)$ is standard Borel, then by Theorem (15.6) in \cite{kechris2012classical} there exists an isomorphism $\phi:E\to B\subseteq [0,1]$ such that $\phi$ and $\phi^{-1}$ are Borel measurable. Being $\eta$ a Poisson point process with intensity $\nu$ and being $\phi$ Borel measurable, by Theorem (5.1) in \cite{last2018lectures}, $\phi_{\#}\eta$ is again Poisson with intensity $\phi_{\#}\nu$ and distributional properties are preserved. 
The induced functional $\widetilde F(\phi_\#\eta):=F(\eta)$ is well defined because $\phi$ is a bijection and $\E[\tilde F (\phi_{\#}\eta)]=\E[F(\eta)]=0$. Moreover, for any $\lambda$ it holds $\E[e^{\tilde F(\phi_{\#}\eta)}]=\E[e^{\lambda F(\eta)}]$, thus any mgf bound for $\tilde F(\phi_{\#}\eta)$ is exactly equivalent for $F(\eta)$. From the definition of pushforward:
\[
|D_w\tilde F| = |\tilde F(\phi_{\#}\eta+\delta_w)-\tilde F(\phi_{\#}\eta)| = |F(\eta+\delta_z)-F(\eta)|\le g(z)=(g \,\circ\,\phi^{-1})(w)=\tilde g(w)
\]
so the add-one bound is preserved by redefining $g$. Lastly note that:
\[
\int_B \tilde g(w)^2e^{\eta_*\tilde g(w)} (\phi_{\#}\nu)(dw) = \int_E \tilde g(\phi(z))^2 e^{\eta_*\tilde g(\phi(z))} \nu(dz) = \int_E g(z)^2 e^{\eta_* g(z)} \nu(dz)  <\infty .
\]
So it is possible to prove Lemma \ref{lem:poisson-bd-external} over the transported system $(B, \mathcal B([0,1])|_B, \phi_{\#}\eta,\tilde F, \tilde g, \phi_{\#}\nu)$, renaming it as the original one $(E,\mathcal E, \eta, F, g,\nu)$. \\
Now, realize $\eta$ as the time-$1$ section of a marked Poisson process $\bar\eta$ on $E\times[0,1]$ with intensity $\nu(dz)\,ds$. Write
\[
\eta_s:=\bar\eta(\cdot\times[0,s]),
\qquad
\eta^s:=\bar\eta(\cdot\times(s,1]),
\qquad
\mathcal G_s:=\sigma\!\bigl(\bar\eta|_{E\times[0,s]}\bigr),
\]
and define the bounded Doob martingale $M_s:=\E[F(\eta_s+\eta^s)\mid \mathcal G_s]=\E[F(\eta)\mid \mathcal G_s]$, then $M_0=\E[F]=0$ and $M_1=F$. If a marked point $(z,s)$ appears at time $s$, the jump of $M$ is
\[
J_s(z):=
\mathbb E\Big[F(\eta_{s-}+\delta_z+\eta^s)-F(\eta_{s-}+\eta^s)\,\Big|\,\mathcal G_{s-},z\Big],
\]
hence $|J_s(z)|\le g(z)$ and, because $F$ is bounded, the martingale $M$ is square-integrable. For the completed natural filtration $(\mathcal G_s)$ of the Poisson random measure $N(ds,dz):=\bar\eta(ds,dz)$, the Theorem (6.7) II in \cite{ikeda2014stochastic} yields a predictable integrand $H\in L^2(\Omega\times(0,1]\times E, d\mathbb P\,ds\,\nu(dz))$ such that
\[
M_t=\int_{(0,t]\times E} H_s(z)\,\widetilde N(ds,dz),
\qquad
\widetilde N(ds,dz):=N(ds,dz)-\nu(dz)\,ds.
\]
At a jump time $s$ of $N$ with mark $z$, from Doob martingale definition
\[
\Delta M_s=\mathbb E\Big[F(\eta_{s-}+\delta_z+\eta^s)-(\eta_{s-}+\eta^s)\,\Big|\,\mathcal G_{s-},z\Big]=J_s(z).
\]
At same time, the stochastic integral representation has jump size $H_s(z)$. By uniqueness of the predictable representation, $H_s(z)=J_s(z)$ for $d\mathbb P\,ds\,\nu(dz)$-a.e. $(\omega,s,z)$, and therefore
\[
M_t=\int_{(0,t]\times E} J_s(z)\,\widetilde N(ds,dz).
\]
Apply It\^o formula to $e^{\lambda M_t}$ (Proposition 8.19 \cite{cont2003financial}), take expectations and use $|J_s(z)|\le g(z)$: 
\[
\E[e^{\lambda M_t}]
\le
1+\int_0^t \E[e^{\lambda M_{s-}}]
\left(\int_E \bigl(e^{|\lambda|g(z)}-1-|\lambda|g(z)\bigr)\,\nu(dz)\right) ds.
\]
Note that for $|\lambda|\le \eta_*/2$,
\[
0\le e^{|\lambda|g(z)}-1-|\lambda|g(z)
\le \frac{\lambda^2}{2} g(z)^2 e^{|\lambda|g(z)}
\le \frac{\lambda^2}{2} g(z)^2 e^{\eta_* g(z)},
\]
so the compensator term is finite by the integrability assumption $\int_E g(z)^2 e^{\eta_* g(z)}\,\nu(dz)<\infty$. Since $F$ is bounded, also $|M_t|\le \|F\|_\infty$ almost surely for every $t$, hence $0\le \E[e^{\lambda M_t}]\le e^{|\lambda|\|F\|_\infty}<\infty$. Thus the function $f(t):=\E[e^{\lambda M_t}]$ is finite on $[0,1]$, and Gronwall's inequality applies to give
\[
\mathbb E[e^{\lambda F}]
\le
\exp\!\left(\int_E \bigl(e^{|\lambda|g(z)}-1-|\lambda|g(z)\bigr)\,\nu(dz)\right)
\qquad (\lambda\in\mathbb R).
\]
Now choose $\lambda_\mathrm{P}:=\eta_*/2$. For $y\ge 0$, again $e^y-1-y\le y^2 e^y$, so for $|\lambda|\le \lambda_\mathrm{P}$,
\[
e^{|\lambda|g(z)}-1-|\lambda|g(z)
\le \lambda^2 g(z)^2 e^{|\lambda|g(z)}
\le \lambda^2 g(z)^2 e^{\eta_* g(z)}.
\]
Substituting this into the previous display gives \eqref{eq:poisson-bd-external-main} for bounded centered $F$. Finally apply the bounded case to $F^{(K)}$. Because $F^{(K)}\to F$ almost surely and in $L^1$, Fatou's lemma for $|\lambda|\le \lambda_\mathrm{P}$ yields
\[
\E[e^{\lambda F}]
\le
\liminf_{K\to\infty}\E[e^{\lambda F^{(K)}}]
\le
\exp\!\left(\lambda^2 \int_E g(z)^2 e^{\eta_* g(z)}\,\nu(dz)\right).
\]
\end{proof}
\begin{lemma}
\label{lem:block-conditional-mgf}
Fix one family $(\xi_r)$ from \eqref{eq:explicit-family}. For a root birth at $x=(v,t)$, define
\[
b_\xi(v,t):=\E\Big[\Gamma_\xi(x)^2 e^{\eta_1\Gamma_\xi(x)}\Big],
\]
where $\eta_1=\eta_1(h,R)>0$ is chosen so that all these expectations are finite. Let
\[
V_q(\xi):=\int_{I_q}\sum_{v=1}^d \mu_v\, b_\xi(v,t)\,dt,
\qquad I_q=[qh,(q+1)h),\ q\in\mathbb Z.
\]
Then there exist a constant $c_\lambda>0$ depending only on $(\beta,\mu_+,k,w_+,\gamma,h_0)$, such that for every finite $q\le n-1$, the Doob martingale difference $D_q:=\E[S_n\mid \mathcal F_q]-\E[S_n\mid \mathcal F_{q-1}]$ satisfies
\begin{equation}
\label{eq:block-conditional-mgf-main}
\E[e^{\lambda D_q}\mid \mathcal F_{q-1}]\le \exp\big(\lambda^2 V_q(\xi)\big)
\qquad (|\lambda|\le c_\lambda h/R).
\end{equation}
\end{lemma}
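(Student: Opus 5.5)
The plan is to condition on $\mathcal F_{q-1}$ and view $D_q$ as a centered functional of the single Poisson point process $U_q$ of cluster roots born in $I_q$. Then Lemma~\ref{lem:poisson-bd-external} gives the bound, with Lemma~\ref{lem:one-cluster-add-one} supplying the add-one cost.

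\emph{Reduction to a Poisson functional.} Since the blocks $(U_p)_{p\in\mathbb Z}$ are independent, we can write
\[
D_q=\Phi(U_q)-\E[\Phi(U_q)],\qquad \Phi(\eta):=\E\bigl[S_n\mid U_{\le q-1},\,U_q=\eta\bigr],
\]
where the conditional expectation integrates over the future blocks $(U_p)_{p>q}$ with the past frozen. Conditionally on $\mathcal F_{q-1}$, $F:=D_q$ is therefore a centered measurable functional of $U_q$. We regard $U_q$ as a Poisson process on the standard Borel mark space $E=\{(v,t,\mathcal C)\}$, where a point carries its root $(v,t)\in[d]\times I_q$ together with its full cluster as a mark. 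The intensity is $\nu(dv,dt,d\mathcal C)=\mu_v\,dt\,\Pbb_v(d\mathcal C)$; this is the marking theorem applied to the cluster representation.

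\emph{Add-one bound.} Fix a point $z=(x,\mathcal C_x)$ and a configuration of all other clusters. Lemma~\ref{lem:one-cluster-add-one} gives $|S_n(\mathcal C_x)-S_n(0)|\le\Gamma_\xi(x,\mathcal C_x)$. This bound is deterministic in the other clusters, so it survives the conditional averaging over future blocks, and we get
\[
|D_zF|\le g(z):=\Gamma_\xi(x,\mathcal C_x).
\]

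\emph{Applying the Poisson lemma.} Lemma~\ref{lem:poisson-bd-external} needs $\int g^2e^{\eta_*g}\,d\nu<\infty$, and
\[
\int_E g^2e^{\eta_*g}\,d\nu=\int_{I_q}\sum_v\mu_v\,b_\xi(v,t)\,dt=V_q(\xi)
\]
once we take $\eta_*=\eta_1$. The lemma then gives $\E[e^{\lambda D_q}\mid\mathcal F_{q-1}]\le\exp(\lambda^2V_q)$ for $|\lambda|\le\eta_1/2$. It remains to check that $\eta_1$ can be chosen as a constant times $h/R$, which makes $c_\lambda=\eta_1R/(2h)$.

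\emph{Finiteness of $V_q$ and the scale of $\eta_1$ (main obstacle).} For finiteness, bound each envelope by cluster counts.
\begin{itemize}
\item Each $\Gamma_{Z_j}(x)$ is at most $\sum_{e}a_h(s_e)\le C_h\,N_{\{j\}}^{\rm tot}$ by \eqref{eq:ah-bounds-main}, with $C_h\asymp h^{-1}$.
\item Each $\Gamma_{Y_i}(x)$ is at most the number of type-$i$ events in the cluster.
\end{itemize}
Hence $\Gamma_\xi\le C(R+h^{-1})R\,W_J^{(a)}$ with $|J|\le2$. By \eqref{eq:agg-W-moment-main}, $\sum_v\mu_v\E_v[W^2e^{\eta W}]\le C$ for some $\eta>0$, so $\eta_1\le c\,\eta/\bigl((R+h^{-1})R\bigr)$ makes $V_q$ finite.

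The delicate point is matching this with the prescribed range $|\lambda|\le c_\lambda h/R$. Near $t$ the decay factor gives $a_h\lesssim 1/(\beta h)$, while the clipped statistics carry a factor $R$ in $\Gamma_{Z_aZ_b}$, so the natural scale is $\eta_1\asymp h/R$ up to the $h$-dependence hidden in the constants. I would first try to show that $\Gamma_\xi\,h/R$ is dominated by a dimension-free multiple of $W_J^{(a)}$, tracking $C_h$ through Lemma~\ref{lem:A}. If $R$-factors remain, I would absorb them into the constant $c_\lambda$, which the statement allows to depend on $h_0$ and is applied with $h,R$ fixed. Uniformity in $q$ and $d$ then follows from the uniform bounds in Lemmas~\ref{lem:A} and \ref{lem:local-count-moments}.
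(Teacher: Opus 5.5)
Your argument is the paper's: condition on $\mathcal F_{q-1}$ and regard $D_q$ as a centered functional of the marked Poisson block $U_q$. Pass the add-one bound of Lemma~\ref{lem:one-cluster-add-one} through the average over future blocks, then apply Lemma~\ref{lem:poisson-bd-external} with $g=\Gamma_\xi$ and $\eta_*=\eta_1$. This gives $\int g^2e^{\eta_1 g}\,d\nu_q=V_q(\xi)$ and the admissible range $|\lambda|\le \eta_1/2$. The paper stops there and imports $\eta_1:=c_\eta h/R$ from Lemma~\ref{lem:envelope-b-profile}, which gives $c_\lambda=c_\eta/2$.

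The part that does not work as written is your last paragraph. Your bound $\Gamma_\xi\le C(R+h^{-1})R\,W_J^{(a)}$ is loose by a factor $1+Rh$. The largest multiplier among the four envelopes is $RC_h\lesssim R/h$, not $R^2+R/h$. As stated, your bound only permits $\eta_1\asymp h/\bigl(R(1+Rh)\bigr)$, which is not of order $h/R$ uniformly over $h\le h_0$, $R\ge1$.

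Your fallback of absorbing leftover $R$-factors into $c_\lambda$ is not allowed. The statement requires $c_\lambda$ to depend only on $(\beta,\mu_+,k,w_+,\gamma,h_0)$, so that would prove a weaker lemma.

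Your own ``first try'' succeeds directly. We have $\Gamma_{Y_i}\le N^{\rm tot}_{\{i\}}$ and $\Gamma_{Z_j}\le C_hN^{\rm tot}_{\{j\}}$ with $C_h\le e^{2\beta h_0}/(\beta h)$. Using $1\le R$ and $R\le h_0R/h$, each of the four envelopes then satisfies $\Gamma_\xi\le K_0\,(R/h)\,W_J^{(a)}$, with $|J|\le 2$ and $K_0=K_0(\beta,h_0)$.

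Now choose $\eta_1=c_\eta h/R$ with $c_\eta K_0\le \eta_m$, where $\eta_m,C_m$ come from \eqref{eq:agg-W-moment-main} with $m=2$. This gives $\sum_v\mu_v b_\xi(v,t)\le K_0^2C_mR^2h^{-2}$. Hence $V_q(\xi)<\infty$, and $c_\lambda=c_\eta/2$ has the required dependence.

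This crude bound suffices for the present lemma. The finer strip decomposition in Lemma~\ref{lem:envelope-b-profile} is needed only later, in Proposition~\ref{prop:square-profile}. There the decay $e^{-c_\Gamma(-t)_+}$ and the sharp $(h,R)$-profiles make $\sum_q V_q(\xi)$ summable and give the right variance proxies.
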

\begin{proof}
Fix one finite block $q\le n-1$. Conditional on $\mathcal F_{q-1}$, the random block $U_q$ is an independent marked Poisson point process, its state space is the space of marked triples $(v,t,\mathcal C)$, where $v\in[d]$, $t\in I_q$, and $\mathcal C$ is a full cluster rooted at $(v,t)$. By Marking Theorem (see \cite{last2018lectures}), the intensity measure is $\nu_q(dv\,dt\,d\mathcal C)=\mu_v\,dt\,\Pbb_{v,t}(d\mathcal C)$. Define the centered block functional 
\[
F_q(U_q):=\mathbb E[S_n\mid \mathcal F_{q-1},U_q]-\mathbb E[S_n\mid \mathcal F_{q-1}].
\]
Since $\mathcal F_q=\sigma(\mathcal F_{q-1},U_q)$, we have $F_q(U_q)=D_q$. Now fix one marked cluster $z=(x,\mathcal C)$ with $x=(v,t)\in[d]\times I_q$. Let $\eta^+$ denote the future root blocks $(U_{q+1},\dots,U_{n-1})$. 
For every fixed realization of $\mathcal F_{q-1}$, every block configuration $U=(\dots,U_{q-1},U_q,\eta_+)$, and every realization of $\eta^+$, Lemma~\ref{lem:one-cluster-add-one} gives
\[
\bigl|S_n(U+\delta_z) - S_n(U)\bigr|\le \Gamma_\xi(x,\mathcal C).
\]
Taking conditional expectation over $\eta^+$ and using Jensen therefore yields
\[
\bigl|F_q(U+\delta_z)-F_q(U)\bigr|
=\Bigl|\mathbb E\bigl[S_n(U+\delta_z) - S_n(U)\mid \mathcal F_{q-1},U_q\bigr]\Bigr|
\le \Gamma_\xi(x,\mathcal C).
\]
Hence Lemma~\ref{lem:poisson-bd-external}, conditional to $\mathcal F_{q-1}$, applied with $g(z)=\Gamma_{\xi}(z)$ and $\eta_*=\eta_1(h,R)$, gives:
\[
\E[e^{\lambda D_q}\mid \mathcal F_{q-1}]\le \exp\!\left(\lambda^2 \int g(z)^2 e^{\eta_1 g(z)}\,\nu_q(dz)\right)
\qquad (|\lambda|\le \lambda_\mathrm{P}).
\]
By the definition of $\nu_q$ and of $b_\xi(v,t)$,
\[
\int g(z)^2 e^{\eta_1 g(z)}\,\nu_q(dz)
=\int_{I_q}\sum_{v=1}^d \mu_v\,\E\Big[\Gamma_\xi(v,t)^2 e^{\eta_1\Gamma_\xi(v,t)}\Big]dt
=V_q(\xi).
\]
Since Lemma~\ref{lem:poisson-bd-external} gives $\lambda_\mathrm{P}=\eta_1(h,R)/2=c_\eta h/(2R)$, \eqref{eq:block-conditional-mgf-main} holds with $c_\lambda:=c_\eta/2$, i.e. the admissible range is $|\lambda|\le c_\lambda h/R$.
\end{proof}
We now bound the aggregated second-order exponential moments of the one-cluster envelopes $\Gamma_{\xi}$:
\begin{lemma}
\label{lem:envelope-b-profile}
Assume $0<h\le h_0$, and $R\ge 1$. Then there exist constants $c_\eta>0$, $c_\Gamma>0$, and $C_\Gamma>0$, depending only on $(\beta,\mu_+,k,w_+,\gamma,h_0)$, such that with $\eta_1(h,R):=c_\eta h/R$, 
one has, uniformly over all $d\ge 1$, all $B\in\mathcal B_{d,k}$, all indices, and all $t\in\R$,
\begin{align}
\sum_{v=1}^d \mu_v b_{Z_j}(v,t) &\le C_\Gamma h^{-2} e^{-c_\Gamma(-t)_+}, & \sum_{v=1}^d \mu_v b_{Y_i}(v,t)  &\le  C_\Gamma e^{-c_\Gamma(-t)_+}, \label{eq:b-profile-Z-Y-main} \\
\sum_{v=1}^d \mu_v b_{Z_jY_i}(v,t) &\le C_\Gamma \bigl(h^{-2}+R^2\bigr)e^{-c_\Gamma(-t)_+}, & \sum_{v=1}^d \mu_v b_{Z_aZ_b}(v,t) &\le C_\Gamma R^2 h^{-2} e^{-c_\Gamma(-t)_+}. \label{eq:b-profile-ZY-ZZ-main}
\end{align}
\end{lemma}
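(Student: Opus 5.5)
The approach is to reduce every envelope $\Gamma_\xi(x)$ to local cluster counts and then invoke Lemma~\ref{lem:local-count-moments}. Fix a root $x=(v,t)$ and work in the cluster's own age coordinate, so that a type-$j$ event at absolute time $s_e$ has age $s_e-t$.

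\textbf{Step 1: the $Z_j$ envelope.} By definition,
\[
\Gamma_{Z_j}(x)=\sum_{e\in\mathcal C_x,\,\mathrm{type}(e)=j}\ \sum_{r=0}^{n-1}e^{-\beta(rh-s_e)}\ind\{s_e\le rh\}\le \sum_{e}a_h(s_e).
\]
By \eqref{eq:ah-bounds-main}, $a_h(s)\le C_h\,e^{\beta s}$ for $s<0$ and $a_h(s)\le C_h$ for $s\ge0$. Here $C_h\lesssim h^{-1}$ for $h\le h_0$.

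Split the events according to the unit window $[t+u,t+u+1)$ in which their age falls. This gives
\[
\Gamma_{Z_j}(x)\le C_h\sum_{u\in\mathbb Z_{\ge0}}w_u\,N^{\rm win}_{J,u}, \qquad w_u:=e^{-\beta(-(t+u+1))_+},
\]
with $J=\{j\}$. When $t<0$, the windows with $t+u+1\le0$ carry exponentially small weight. Every event of a cluster rooted at $t<0$ that is relevant to the sum must therefore either have age at least about $|t|$ or be damped by $e^{-\beta|t+u|}$. This is where the profile $e^{-c_\Gamma(-t)_+}$ comes from.

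\textbf{Step 2: second exponential moment.} To bound $b_{Z_j}(v,t)=\E[\Gamma^2e^{\eta_1\Gamma}]$, take $\eta_1=c_\eta h/R\le c_\eta h$. Then $\eta_1\Gamma_{Z_j}\le c\,\sum_u w_uN^{\rm win}_{J,u}$, and this sum is at most $c\,N^{\rm tot}_J$. Choosing $c_\eta$ small makes the exponential factor bounded by $e^{\eta_C N^{\rm tot}}$ (shrinking $\eta_C$ if needed), so the exponent is controlled by the total count.

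For the quadratic prefactor, apply Cauchy--Schwarz with summable weights:
\[
\Bigl(\sum_u w_uN_u\Bigr)^2\le \Bigl(\sum_u w_u\Bigr)\sum_u w_uN_u^2 .
\]
Summing over $v$ against $\mu_v$, the mixed term $N_u^2e^{\eta N^{\rm tot}}$ is handled by the same Hölder argument used in Lemma~\ref{lem:local-count-moments}. One factor is \eqref{eq:agg-total-count-fourth-main} and the other is the lifetime tail \eqref{eq:lifetime-tail-agg-main} at age $(u)_+$.

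For $t\ge0$ the $w_u$ are all $1$, but $\sum_u e^{-c_{cl}u/2}$ converges, so the bound is $O(C_h^2)=O(h^{-2})$. For $t<0$, either $u\ge |t|/2$, where the tail decay gives $e^{-c|t|}$, or $u<|t|/2$, where the weight gives $w_u\le e^{-\beta|t|/2}$. This yields $C_\Gamma h^{-2}e^{-c_\Gamma(-t)_+}$.

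\textbf{Step 3: the $Y_i$ envelope.} Here $\Gamma_{Y_i}(x)\le \#\{\text{type-}i \text{ events of }\mathcal C_x \text{ with time in }(0,nh]\}$. This count is at most $N^{\rm tail}_{\{i\},(-t)_+}$, i.e. the events at age $\ge(-t)_+$. Lemma~\ref{lem:local-count-moments} gives the bound directly with no $h$ factor, since $\eta_1\le\eta_C$ for small $c_\eta$.

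\textbf{Step 4: the product envelopes.} Use $(x+y)^2e^{\eta(x+y)}\le 2(x^2+y^2)e^{\eta x}e^{\eta y}$ and then Cauchy--Schwarz once more, with $\eta_1$ reduced by a constant. Two scalings must be checked. First, $R\Gamma_{Y_i}$ has exponent $\eta_1R\Gamma_Y=c_\eta h\Gamma_Y$, which is fine, and it contributes $R^2$. Second, $R\Gamma_{Z_a}$ has exponent $c_\eta h\Gamma_{Z_a}\lesssim c_\eta N^{\rm tot}$, which is fine, and it contributes $R^2h^{-2}$. This gives the two bounds in \eqref{eq:b-profile-ZY-ZZ-main}.

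The main obstacle is Step 2 in the regime $t<0$. There one must show that the geometric damping in $a_h$ and the lifetime tail together produce exponential decay in $|t|$, while keeping the exponential moment finite uniformly in $d$. The key point is that the exponent must be controlled by the total cluster count, never by $C_h$ times it; the choice $\eta_1\propto h/R$ is made exactly for this. I would try to handle it first by the dichotomy between early and late windows described in Step 2.
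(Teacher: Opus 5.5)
Your plan has the same architecture as the paper's proof: reduce each envelope to local cluster counts, use the damping $a_h(s)\le C_he^{\beta s}$ for $s<0$, and close with Lemma~\ref{lem:local-count-moments}. Steps~1 and~3 are correct. Bounding the exponential factor pointwise by $e^{cN^{\rm tot}_J}$, via $\eta_1C_h\lesssim c_\eta$, is a legitimate substitute for the paper's Jensen step. The gap is the quadratic factor in Step~2. The inequality $(\sum_uw_uN_u)^2\le(\sum_uw_u)\sum_uw_uN_u^2$ needs $\sum_uw_u<\infty$. But your weights $w_u=e^{-\beta(-(t+u+1))_+}$ equal $1$ for every $u\ge(-t-1)_+$, for \emph{every} $t$, so $\sum_uw_u=\infty$ and the bound is vacuous. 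The convergence of $\sum_ue^{-c_{cl}u/2}$ that you invoke for $t\ge0$ controls only the expectation of the second factor; it cannot compensate an infinite first factor. As written, you therefore get no finite bound for $t\ge0$, nor for the undamped late windows when $t<0$.

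The missing device is to collect all undamped events into a \emph{single} count. Set $u:=(-t)_+$. Every type-$j$ event of age at least $u$ has nonnegative absolute time and contributes at most $C_h$. Events with ages in $[u-m-1,u-m)\cap[0,\infty)$ contribute at most $C_he^{-\beta m}$. Hence
\[
\Gamma_{Z_j}(x)\le C_hN^{\rm tail}_{\{j\},u}+C_h\sum_{m\ge0}e^{-\beta m}N^{\rm win}_{\{j\},u-m-1},
\]
with finite total weight $W_h=C_h(1+\sum_{m\ge0}e^{-\beta m})\lesssim h^{-1}$. Jensen (or your Cauchy--Schwarz) with probability weights $w_r/W_h$, together with Lemma~\ref{lem:local-count-moments}, then gives $Ch^{-2}\bigl(e^{-cu}+\sum_{m}e^{-\beta m}e^{-c(u-m-1)_+}\bigr)\le Ch^{-2}e^{-c_\Gamma u}$. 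For $t\ge0$ this reduces to $\Gamma_{Z_j}\le C_hN^{\rm tot}_{\{j\}}$ and \eqref{eq:agg-total-count-fourth-main}. This is the paper's decomposition. Keeping your unit windows but using tilted weights $e^{\pm\epsilon u}$, with $\epsilon$ below the decay rate of the window moments, would also work.

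Finally, in Step~4 do not split off a factor such as $e^{2\eta_1 y}$ by Cauchy--Schwarz, whether per root type or against $\sum_v\mu_v\Pbb_v$. Since $e^{2\eta_1 y}\ge1$ on every cluster, that factor carries the total mass $\sum_v\mu_v\asymp d$ and destroys uniformity in $d$. Instead, reuse your pointwise bound $\eta_1\Gamma_\xi\le cN^{\rm tot}_J$ with $J=\{i,j\}$ or $J=\{a,b\}$. Alternatively, use $(a+b)^2e^{\eta(a+b)}\le4(a^2e^{2\eta a}+b^2e^{2\eta b})$, as the paper does.
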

\begin{proof}
For $Y_i$, if the root is born at time $t$, then every contributing type-$i$ event must occur at age at least $u:=(-t)_+$. Because $N_{\{i\},u}^{\rm tail}$ counts events in the full cluster, including the root when $v=i$ and $u=0$, this bound also covers the case $t\ge 0$ in which the only contribution may come from the root itself. Thus $\Gamma_{Y_i}(x)\le N_{\{i\},u}^{\rm tail}$. Applying \eqref{eq:tail-count-moment-main} gives the first in \eqref{eq:b-profile-Z-Y-main}.\\
For $Z_j$, first suppose $t\ge 0$. Then, recall the first definition in \eqref{eq:Gamma-Z-Y-main}, then every type-$j$ event contributes at most $C_h\le C/h$ to the sampled sum (see definition \eqref{eq:H-cluster-main}), so considering every type-$j$ event $\Gamma_{Z_j}(x)\le C_h N_{\{j\},0}^{\rm tail}$. Here again $N_{\{j\},0}^{\rm tail}$ counts the root itself when $v=j$, so the bound remains valid even if the cluster has no offspring at all. Then \eqref{eq:tail-count-moment-main} yields the bound $\sum_v\mu_v b_{Z_j}(v,t)\le C h^{-2}$. Now suppose $t<0$ and write $u:=-t>0$. Split the type-$j$ events of the cluster according to their ages: events with age at least $u$ satisfy $t+s\ge 0$, so we only use $a_h(t+s)\le C_h$. For the remaining events, ages in the strip
\[
[u-m-1,u-m)\cap[0,\infty),\qquad m\ge 0,
\]
satisfy $t+s\in[-m-1,-m)$ and therefore $a_h(t+s)\le C_h e^{-\beta m}$. This includes the first partial window $[0,u)$ when $0<u<1$, namely the case $m=0$. Hence
\begin{equation}
\label{eq:Gamma-Z-strip-main}
\Gamma_{Z_j}(x)
\le C_h N_{\{j\},u}^{\rm tail}
+ C_h\sum_{m\ge 0} e^{-\beta m} N_{\{j\},u-m-1}^{\rm win}=:\sum_{r\ge 0} w_r A_r.
\end{equation}
where in the last definition we set
\[
A_0:=N_{\{j\},u}^{\rm tail},
\qquad
A_{m+1}:=N_{\{j\},u-m-1}^{\rm win}, \quad w_0:=C_h, \qquad w_{m+1}:=C_h e^{-\beta m}\quad (m\ge 0).
\]
Then let
\[
W_h:=\sum_{r\ge 0} w_r = C_h\Bigl(1+\sum_{m\ge 0}e^{-\beta m}\Bigr)\le C_\beta h^{-1}.
\]
For $\phi_{\eta,W}(y):=y^2 e^{\eta W y}$, convexity of $\phi_{\eta,W}$ on $[0,\infty)$, and Jensen's inequality for the probability weights $p_r:=w_r/W_h$ give
\[
\Bigl(\sum_{r\ge 0} w_r A_r\Bigr)^2 e^{\eta \sum_{r\ge 0} w_r A_r}
= W_h^2\, \phi_{\eta,W_h}\!\Bigl(\sum_{r\ge 0} p_r A_r\Bigr)
\le W_h^2 \sum_{r\ge 0} p_r \phi_{\eta,W_h}(A_r)
= W_h \sum_{r\ge 0} w_r A_r^2 e^{\eta W_h A_r}.
\]
Applying this with $\eta=\eta_1$ and using $W_h\le C_\beta h^{-1}$ yields
\[
\Gamma_{Z_j}(x)^2 e^{\eta_1\Gamma_{Z_j}(x)}
\le C h^{-2}\left((N_{\{j\},u}^{\rm tail})^2 e^{c_h\eta_1 N_{\{j\},u}^{\rm tail}} + \sum_{m\ge 0} e^{-\beta m}(N_{\{j\},u-m-1}^{\rm win})^2 e^{c_h\eta_1 N_{\{j\},u-m-1}^{\rm win}}\right),
\]
for some $c_h\asymp W_h\lesssim h^{-1}$. Let $\eta_{\rm C}>0$ be the exponential-moment parameter from Lemma~\ref{lem:local-count-moments}. In order to apply Lemma~\ref{lem:local-count-moments}, we need $c_h\eta_1$ sufficiently small. With $\eta_1(h,R):=c_\eta h/R$ and $W_h\le C_\beta h^{-1}$, one has $c_h\eta_1(h,R)\le C_\beta\eta_1(h,R)/h = C_\beta c_\eta/R\le C_\beta c_\eta$ uniformly in $R\ge 1$ and $h\in(0,h_0]$. It therefore suffices to choose
$c_\eta\le \min\!\left\{\frac{\eta_{\rm C}}{4C_\beta},\,\frac{\eta_{\rm C}}{2h_0}\right\}$, which depends only on $(\beta,\mu_+,k,w_+,\gamma,h_0)$. With this choice, summing over $v$, using Lemma~\ref{lem:local-count-moments} and decreasing $c_\Gamma>0$ if necessary lead to
\[
\sum_{v=1}^d \mu_v b_{Z_j}(v,t)
\le C h^{-2}\left(e^{-c_\Gamma u}+\sum_{m\ge 0} e^{-\beta m} e^{-c_\Gamma (u-m-1)_+}\right)
\le C h^{-2} e^{-c_\Gamma u}
\]
For the mixed $Z_jY_i$, write $a:=2\Gamma_{Z_j}$ and $b:=R\Gamma_{Y_i}$ and apply $(a+b)^2e^{\eta_1(a+b)}\le C(a^2e^{2\eta_1 a}+b^2e^{2\eta_1 b})$. The first term $4\Gamma_{Z_j}^2e^{4\eta_1\Gamma_{Z_j}}$ is handled exactly as in the $Z_j$ case, since the effective parameter $4c_h\eta_1 = 4C_\beta c_\eta/R\le \eta_{\rm C}$. The second term $R^2\Gamma_{Y_i}^2e^{2R\eta_1\Gamma_{Y_i}}$  has exponent $2R\eta_1 = 2R\cdot c_\eta h/R = 2c_\eta h\le \eta_{\rm C}$; since $\Gamma_{Y_i}\le N_{\{i\},u}^{\rm tail}$, Lemma~\ref{lem:local-count-moments} applies and $R^2$ gives the bound $C_\Gamma(h^{-2}+R^2)e^{-c_\Gamma(-t)_+}$ in \eqref{eq:b-profile-ZY-ZZ-main}. For $Z_aZ_b$, set $a:=2R\Gamma_{Z_a}$ and $b:=2R\Gamma_{Z_b}$; the effective parameter for each $\Gamma_{Z_a}$ term is $4Rc_h\eta_1 = 4R\cdot(C_\beta/h)\cdot(c_\eta h/R) = 4C_\beta c_\eta\le\eta_{\rm C}$, so Lemma~\ref{lem:local-count-moments} applies and yields the bound $C_\Gamma R^2h^{-2}e^{-c_\Gamma(-t)_+}$ in \eqref{eq:b-profile-ZY-ZZ-main}.
\end{proof}
We now sum the profile bounds of Lemma~\ref{lem:envelope-b-profile} over all root blocks to obtain the total variance proxy $\sum_q V_q(\xi)$ needed by the next bounded-block concentration lemma.
\begin{proposition}
\label{prop:square-profile}
Assume $0<h\le h_0$, $R\ge 1$, and $T\ge 1$. Then there exists a constant $C>0$, depending only on $(h,R,\beta,\mu_+,k,w_+,\rho,h_0)$, such that uniformly over all $d\ge 1$, all $B\in\mathcal B_{d,k}$, and all indices,
\begin{align}
\sum_{q=-\infty}^{n-1}V_q(Z_j) &\le C\,\frac{n}{h}, &\sum_{q=-\infty}^{n-1}V_q(Y_i) &\le C\,n h,\label{eq:square-profile-Z-Y-main}\\
\sum_{q=-\infty}^{n-1}V_q(Z_jY_i) &\le C\,n\Bigl(\frac1h+R^2 h\Bigr), &\sum_{q=-\infty}^{n-1}V_q(Z_aZ_b) &\le C\,n\,\frac{R^2}{h}\label{eq:square-profile-ZY-ZZ-main}
\end{align}
\end{proposition}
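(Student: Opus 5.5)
The plan is to integrate the one-root profile bounds of Lemma~\ref{lem:envelope-b-profile} block by block, splitting the root blocks into those inside the observation window and those in the past. For each family $\xi$ from \eqref{eq:explicit-family}, Lemma~\ref{lem:envelope-b-profile} gives a bound of the form $\sum_v\mu_v b_\xi(v,t)\le C_\Gamma\,\Phi_\xi(h,R)\,e^{-c_\Gamma(-t)_+}$. The prefactors are
\[
\Phi_{Z_j}=h^{-2},\qquad \Phi_{Y_i}=1,\qquad \Phi_{Z_jY_i}=h^{-2}+R^2,\qquad \Phi_{Z_aZ_b}=R^2h^{-2}.
\]
Since $V_q(\xi)=\int_{I_q}\sum_v\mu_v b_\xi(v,t)\,dt$ and the blocks $I_q$ tile $(-\infty,nh)$, summing over $q\le n-1$ gives
\[
\sum_{q=-\infty}^{n-1}V_q(\xi)\le C_\Gamma\Phi_\xi(h,R)\int_{-\infty}^{nh}e^{-c_\Gamma(-t)_+}\,dt
= C_\Gamma\Phi_\xi(h,R)\Bigl(nh+\frac{1}{c_\Gamma}\Bigr).
\]
The term $nh$ comes from the blocks $0\le q\le n-1$, on which the profile is bounded by $1$. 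The term $1/c_\Gamma$ comes from the past blocks $q<0$, where the exponential decay in the root's age makes the sum over all past roots finite, uniformly in $d$ and $B$.

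The second step absorbs the constant $1/c_\Gamma$ into $nh$. Because $T\ge 1\ge h$ (using $h\le h_0=1$), we have $T/h\ge 1$ and $\lfloor x\rfloor\ge x/2$ for $x\ge1$, so $nh\ge T/2\ge 1/2$. Hence $nh+1/c_\Gamma\le(1+2/c_\Gamma)\,nh$.

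The last step multiplies out the prefactors, which gives exactly the four stated bounds:
\begin{itemize}
\item $h^{-2}\cdot nh=n/h$ for $Z_j$;
\item $1\cdot nh$ for $Y_i$;
\item $(h^{-2}+R^2)\,nh=n(h^{-1}+R^2h)$ for $Z_jY_i$;
\item $R^2h^{-2}\cdot nh=nR^2/h$ for $Z_aZ_b$.
\end{itemize}
The constant depends only on $(C_\Gamma,c_\Gamma)$, hence only on the admissible parameters, and is uniform over $d$, $B$ and the indices.

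There is essentially no obstacle beyond bookkeeping. The one point that needs care is the contribution of roots born before time $0$. Here I rely on the fact that the factor $e^{-c_\Gamma(-t)_+}$ in Lemma~\ref{lem:envelope-b-profile} is uniform in $t$. This makes the infinite sum over $q<0$ converge to an $O(\Phi_\xi)$ term rather than one growing with $n$. That term is then dominated by the in-window contribution because $T\ge1$.
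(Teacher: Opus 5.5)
Your proposal is correct and follows the paper's argument. You bound each $V_q(\xi)$ using the profile estimates of Lemma~\ref{lem:envelope-b-profile}, get the $n$-proportional term from the in-window blocks, control the past blocks $q<0$ through the factor $e^{-c_\Gamma(-t)_+}$, and absorb the resulting $O(\Phi_\xi)$ remainder using $T\ge 1$ and $h\le h_0=1$, so that $nh\gtrsim 1$. The only difference is cosmetic: you integrate the profile bound over $(-\infty,nh)$ in one step, while the paper sums block by block and bounds the geometric tail $\sum_{m\ge1}e^{-cmh}\lesssim h^{-1}$, which gives the same $h^{-2}\Phi$-type remainder.
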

\begin{proof}
Recall the definition for $V_q(\xi)$ given in Lemma~\ref{lem:block-conditional-mgf}. For $q=0,\dots,n-1$, by Lemma~\ref{lem:envelope-b-profile}:
\[
V_q(Z_j)\le C h^{-1},
\qquad
V_q(Y_i)\le C h,
\qquad
V_q(Z_jY_i)\le C(h^{-1}+R^2 h),
\qquad
V_q(Z_aZ_b)\le C R^2 h^{-1}.
\]
For $q<0$, write $t\in[qh,(q+1)h)$ so that $(-t)_+\asymp |q|h$ and therefore
\begin{align*}
    V_q(Z_j)&\le C h^{-1}e^{-c|q|h},
\quad
&V_q(Y_i)&\le C h e^{-c|q|h}, \\
V_q(Z_jY_i)&\le C(h^{-1}+R^2 h)e^{-c|q|h},
\quad
&V_q(Z_aZ_b)&\le C R^2 h^{-1}e^{-c|q|h}.
\end{align*}
Summing the geometric tails and using $T\ge 1$, hence $n=\lfloor T/h\rfloor\gtrsim h^{-1}$, gives
\begin{align*}
\sum_{q=-\infty}^{n-1}V_q(Z_j)
&\le C\left(\frac{n}{h}+\sum_{m\ge 1} h^{-1}e^{-cmh}\right)
\le C\left(\frac{n}{h}+\frac1{h^2}\right)
\le C\frac{n}{h},\\
\sum_{q=-\infty}^{n-1}V_q(Y_i)
&\le C\left(nh+\sum_{m\ge 1} he^{-cmh}\right)
\le C(nh+1)
\le Cnh,
\end{align*}
and the two product bounds follow in the same way.
\end{proof}
Now, we are ready to state the central result for this section: the lemmas above show that each individual cluster has a limited impact on the empirical averages (Lemma~\ref{lem:one-cluster-add-one} and Lemma~\ref{lem:block-conditional-mgf}), and that this impact decays as the cluster ages (Lemma~\ref{lem:relevant-lifetime-tail} and Lemma~\ref{lem:envelope-b-profile}). Combining these bounds across all clusters and all time blocks, the following proposition shows that every empirical average used by the estimator concentrates around its true value, with a tail probability that decays exponentially in $n$. This is the key statistical guarantee that will allow the estimator to succeed with high probability once $n$ is large enough.
\begin{proposition}
\label{prop:custom-bernstein}
Assume $0<h\le h_0$, $R\ge 1$, and $T\ge 1$. Then there exist constants $c,u_0>0$, depending on $(h,R,\beta,\mu_+,k,w_+,\gamma,h_0)$, such that uniformly over all $d\ge 1$, all $B\in\mathcal B_{d,k}$, and all choices of $(\xi_r)$ from the family \eqref{eq:explicit-family}, the following holds for every $u$ with $0<u\le u_0$:
\begin{align}
\Pbb\left(\left|\frac1n\sum_{r=0}^{n-1}(Z_{j,r}^{(R)}-\mathbb E Z_{j,0}^{(R)})\right|>u\right)
&\le 2\exp\left(-c n h u^2\right), \label{eq:conc-Z-main}\\
\Pbb\left(\left|\frac1n\sum_{r=0}^{n-1}(Y_{i,r}^{(h)}-\mathbb E Y_{i,0}^{(h)})\right|>u\right)
&\le 2\exp\left(-c n \frac{u^2}{h}\right), \label{eq:conc-Y-main}\\
\Pbb\left(\left|\frac1n\sum_{r=0}^{n-1}(Z_{j,r}^{(R)}Y_{i,r}^{(h)}-\mathbb E[Z_{j,0}^{(R)}Y_{i,0}^{(h)}])\right|>u\right)
&\le 2\exp\left(-c n\frac{u^2}{h^{-1}+R^2 h}\right), \label{eq:conc-ZY-main}\\
\Pbb\left(\left|\frac1n\sum_{r=0}^{n-1}(Z_{a,r}^{(R)}Z_{b,r}^{(R)}-\mathbb E[Z_{a,0}^{(R)}Z_{b,0}^{(R)}])\right|>u\right)
&\le 2\exp\left(-c n\frac{u^2}{R^2 h^{-1}}\right). \label{eq:conc-ZZ-main}
\end{align}
\end{proposition}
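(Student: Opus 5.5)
The plan is a Chernoff bound applied to the Doob martingale decomposition of $S_n$ from \eqref{eq:s_nstatistic} over the independent root blocks $(U_q)_{q\le n-1}$. Since $S_n$ is measurable with respect to $(U_q)_{q\le n-1}$, we can write
\[
S_n=\sum_{q=-\infty}^{n-1}D_q,\qquad D_q=\E[S_n\mid\mathcal F_q]-\E[S_n\mid\mathcal F_{q-1}].
\]
This series converges in $L^2$: as $q\to-\infty$, $\E[S_n\mid\mathcal F_{q}]\to\E[S_n]=0$ by the tail triviality of independent blocks (Kolmogorov $0$--$1$) and martingale convergence. To make the step clean, I first truncate at a finite $q_0$ and then let $q_0\to-\infty$ via Fatou.

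The first step is the exponential moment bound. Iterating the conditional bound of Lemma~\ref{lem:block-conditional-mgf} (tower property, from $q=n-1$ down to $q_0$) gives, for $|\lambda|\le c_\lambda h/R$,
\[
\E\bigl[e^{\lambda S_n}\bigr]\le\exp\Bigl(\lambda^2\sum_{q\le n-1}V_q(\xi)\Bigr)\le \exp\bigl(\lambda^2 n\,\sigma_\xi^2\bigr).
\]
Here $\sigma_\xi^2$ is read off from Proposition~\ref{prop:square-profile}: it equals $C/h$, $Ch$, $C(h^{-1}+R^2h)$ and $CR^2/h$ for the four families respectively.

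The second step is the Chernoff bound. Markov's inequality gives
\[
\Pbb\bigl(S_n>nu\bigr)\le\exp\bigl(-\lambda nu+\lambda^2 n\sigma_\xi^2\bigr).
\]
Take $\lambda=u/(2\sigma_\xi^2)$, which yields $\exp(-nu^2/(4\sigma_\xi^2))$. The same bound holds for $-S_n$, so the two-sided bound has prefactor $2$. Plugging in $\sigma_\xi^2$ produces exactly the exponents $nhu^2$, $nu^2/h$, $nu^2/(h^{-1}+R^2h)$ and $nu^2h/R^2$ claimed in \eqref{eq:conc-Z-main}--\eqref{eq:conc-ZZ-main}.

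The main obstacle is the admissibility constraint $\lambda\le c_\lambda h/R$. It forces $u\le 2\sigma_\xi^2c_\lambda h/R$, so we set
\[
u_0:=\min_\xi 2c_\lambda\sigma_\xi^2h/R>0.
\]
This depends only on $(h,R)$ and the model constants, which the statement allows, and it gives sub-Gaussian behaviour on $(0,u_0]$. For the $Y_i$ family, $\sigma^2\asymp h$ makes $u_0\asymp h^2/R$. This is small but still a positive constant once $h$ and $R$ are fixed, so it is acceptable. All constants are uniform in $d$ and $B$ because Lemmas~\ref{lem:envelope-b-profile} and \ref{lem:block-conditional-mgf} are.

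A secondary point is justifying the infinite-past limit. For each finite $q_0$, the truncated martingale $\E[S_n\mid\mathcal F_{n-1}]-\E[S_n\mid\mathcal F_{q_0-1}]$ satisfies the mgf bound with the partial sum of the $V_q$. This truncated martingale equals $S_n-\E[S_n\mid\mathcal F_{q_0-1}]$, which tends to $S_n$ almost surely by backward martingale convergence, so Fatou's lemma transfers the bound to $S_n$.
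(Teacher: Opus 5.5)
Your proposal is correct and follows essentially the same route as the paper: a Doob martingale over the independent root blocks, iteration of the conditional mgf bound of Lemma~\ref{lem:block-conditional-mgf}, removal of the truncation via backward martingale convergence and Kolmogorov's $0$--$1$ law, and a Chernoff step with the variance proxies from Proposition~\ref{prop:square-profile}. The differences are minor. You pass to the limit with Fatou's lemma, using a.s.\ convergence, where the paper uses an $L^2$-boundedness/uniform-integrability argument that halves the admissible $\lambda$-range. You also optimize $\lambda$ against the deterministic upper bound $n\sigma_\xi^2$ rather than the exact sum $K_\xi$, which makes the admissibility condition $u\le u_0$ cleaner.
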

\begin{proof}
Let $S_n$ as in \eqref{eq:s_nstatistic} and $D_q$ as in Lemma~\ref{lem:block-conditional-mgf} with $q\le n-1$. For $M\ge 1$, set
\[
S_n^{(M)}:=\sum_{q=-M}^{n-1} D_q=\mathbb E[S_n\mid \mathcal F_{n-1}]-\mathbb E[S_n\mid \mathcal F_{-M-1}].
\]
By Lemma~\ref{lem:block-conditional-mgf}, $\mathbb E[e^{\lambda D_q}\mid \mathcal F_{q-1}]\le \exp\big(\lambda^2 V_q(\xi)\big)$ with $|\lambda|\le c_\lambda h/R$. Iterating for $q=-M,\dots,n-1$ gives
\[
\E[e^{\lambda S_n^{(M)}}]\le \exp\Big(\lambda^2\sum_{q=-M}^{n-1}V_q(\xi)\Big).
\]    
By Proposition~\ref{prop:square-profile}, $\sum_{q=-\infty}^{n-1}V_q(\xi)<\infty$. Moreover, differentiating \eqref{eq:block-conditional-mgf-main} at $\lambda=0$ shows $\E[D_q^2]\le V_q(\xi)$, so $\sum_{q=-\infty}^{n-1}\E[D_q^2]<\infty$. In particular $(S_n^{(M)})_{M\ge 1}$ is Cauchy in $L^2$. To identify its limit, note that $S_n$ is $\mathcal F_{n-1}$-measurable and $\E[S_n]=0$, hence
\[
S_n^{(M)}=\E[S_n\mid \mathcal F_{n-1}]-\E[S_n\mid \mathcal F_{-M-1}]=S_n-\E[S_n\mid \mathcal F_{-M-1}].
\]
Note that $(\mathcal F_{-M-1})_{M\ge 1}$ decrease to the tail sigma-field of the independent root blocks $(U_q)_{q\in\mathbb Z}$
\[
\mathcal F_{-\infty}:=\bigcap_{M\ge 1}\mathcal F_{-M-1},
\]
By the reverse martingale convergence theorem,
$\mathbb E[S_n\mid \mathcal F_{-M-1}]\to \E[S_n\mid \mathcal F_{-\infty}]$ in $L^2$ and by Kolmogorov's $0$-$1$ law the tail sigma-field $\mathcal F_{-\infty}$ is trivial, so $\E[S_n\mid \mathcal F_{-\infty}]=\E[S_n]=0$. Taking $|2\lambda|\le c_\lambda h/R$, the same mgf bound applied with the sign-correct parameter $2\lambda$ gives
\[
\sup_M \mathbb E[e^{2\lambda S_n^{(M)}}]\le \exp\Big(4\lambda^2\sum_{q=-\infty}^{n-1}V_q(\xi)\Big)<\infty.
\]
where, for simplicity on exposition, we choose to introduce a suitable positive constant $C$ instead of a numerical factor.
Thus, the family $\{e^{\lambda S_n^{(M)}}\}_M$ is bounded in $L^2$ and therefore uniformly integrable. Therefore
\[
\E[e^{\lambda S_n}] = \lim_{M\to\infty} \E[e^{\lambda S_n^{(M)}}]
\le \exp\Big(\lambda^2\sum_{q=-\infty}^{n-1}V_q(\xi)\Big).
\]
In order to have $\E[e^{\lambda S_n}]\le e^{K_\xi\lambda^2}$ with $|\lambda|\le c_\lambda h/(2R)$, write
\[
K_\xi:=\sum_{q=-\infty}^{n-1}V_q(\xi).
\]
For any $\lambda\in[0,c_\lambda h/(2R)]$, Chernoff's bound gives $\Pbb\!\left(S_n/n>u\right)\le \exp\!\bigl(-\lambda n u+K_\xi\lambda^2\bigr)$. Choose $\lambda=nu/2K_\xi$ (we can do this whenever $u\le u_0$, as we will see in a moment), then
\[
\Pbb\!\left(\frac{S_n}{n}>u\right)\le \exp\!\left(-\frac{n^2u^2}{4K_\xi}\right),
\]
and the same bound holds for the lower tail. Proposition~\ref{prop:square-profile} gives
\[
K_{Z_j}\lesssim \frac{n}{h},
\qquad
K_{Y_i}\lesssim nh,
\qquad
K_{Z_jY_i}\lesssim n(h^{-1}+R^2 h),
\qquad
K_{Z_aZ_b}\lesssim \frac{nR^2}{h},
\]
with constants depending only on $(h,R,\beta,\mu_+,k,w_+,\rho,h_0)$. Thus the four displayed tail bounds \eqref{eq:conc-Z-main}--\eqref{eq:conc-ZZ-main} follow once
\[
u\le u_0:=\frac{c_\lambda h}{2R}
\min\!\left\{\frac1h,\ h,\ h^{-1}+R^2 h,\ \frac{R^2}{h}\right\},
\]
after absorbing fixed factors into the constant $c$.
\end{proof}
Next lemma translates Proposition~\ref{prop:custom-bernstein} into simultaneous high-probability control over all empirical screening statistics $\widehat{F}_{ij}^{(h,R)}$, Gram matrices $\widehat{\Sigma}_C^{(R)}$, and score vectors $\widehat{g}_{i,C}^{(h,R)}$ across all $d^2$ index pairs and all candidate sets of size at most $m$, via a union bound costing a $\log d$ factor.
\begin{lemma}
\label{lem:block-conc-consequences}
Fix $0<h\le1$, $R\ge 1$, and $m\in\mathbb N$. There exist constants $C_{\rm prob},c_{\rm prob},C_0>0$, depending only on $(m,h,R,\beta,\mu_+,k,w_+,\gamma,h_0)$, such that for every $d$, every $B\in\mathcal B_{d,k}$, and every $T\ge 1$ satisfying
\begin{equation}
\label{eq:n-conc-admissible}
n\ge C_0\max\!\left\{\frac{h^{-1}+R^2 h}{u_0(h,R)^2},\ \frac{R^2}{h\,u_0(h,R)^2}\right\}\log d,
\end{equation}
with probability at least $1-d^{-c_{\rm prob}}$ the following hold simultaneously:
\begin{enumerate}[label=(\roman*),itemsep=0.25em]
\item for all rows $i$ and all coordinates $j$,
\begin{equation}
\label{eq:screen-conc}
\abs{\widehat F_{ij}^{(h,R)}-F_{ij}^{(h,R)}}
\le C_{\rm prob}\sqrt{\frac{(h^{-1}+R^2 h)\log d}{n}};
\end{equation}
\item for all candidate sets $C$ with $|C|\le m$,
\begin{equation}
\label{eq:Gram-conc}
\norm{\widehat\Sigma_C^{(R)}-\Sigma_C^{(R)}}_\op
\le C_{\rm prob}R\sqrt{\frac{\log d}{n h}};
\end{equation}
\item for all rows $i$ and all candidate sets $C$ with $|C|\le m$,
\begin{equation}
\label{eq:g-conc}
\norm{\widehat g_{i,C}^{(h,R)}-g_{i,C}^{(h,R)}}_\infty
\le C_{\rm prob}\sqrt{\frac{(h^{-1}+R^2 h)\log d}{n}}.
\end{equation}
\end{enumerate}
\end{lemma}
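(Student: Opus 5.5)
The plan is to derive everything from Proposition~\ref{prop:custom-bernstein} by a union bound, applied to each of the four elementary averages in \eqref{eq:explicit-family}. The centered quantities $\widehat F$, $\widehat\Sigma_C$, $\widehat g_{i,C}$ are then recovered through the algebraic identity for empirical covariances. Write $\bar Z_j:=\frac1n\sum_r Z_{j,r}^{(R)}$, $\bar Y_i:=\frac1n\sum_r Y_{i,r}^{(h)}$, and similarly $\overline{ZY}_{ij}$, $\overline{ZZ}_{ab}$. Then
\[
\widehat F_{ij}^{(h,R)}-F_{ij}^{(h,R)}
=(\overline{ZY}_{ij}-\E[ZY])-\bar Y_i(\bar Z_j-\E Z)-\E Z\,(\bar Y_i-\E Y),
\]
and $\widehat g_{i,C}$ has exactly these entries for $j\in C$. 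Since $0\le \bar Y_i\le 1$ and $0\le \E Z\le \E X_j(0)\le C$ by Lemma~\ref{lem:stability_stationarymean_momen}, it suffices to control the three elementary deviations. Analogously, $(\widehat\Sigma_C-\Sigma_C)_{ab}=(\overline{ZZ}_{ab}-\E[Z_aZ_b])-\bar Z_a(\bar Z_b-\E Z_b)-\E Z_b(\bar Z_a-\E Z_a)$, with $\bar Z_a\le R$.

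For the deviation thresholds, I will choose for each family a level $u_\xi=C'\sqrt{K_\xi\log d}/n$ that matches its variance proxy. This gives:
\begin{itemize}
\item $u_{ZY}\asymp\sqrt{(h^{-1}+R^2h)\log d/n}$ for the $ZY$ family;
\item $u_Z\asymp\sqrt{\log d/(nh)}$ and $u_Y\asymp\sqrt{h\log d/n}$, both dominated by $u_{ZY}$;
\item $u_{ZZ}\asymp R\sqrt{\log d/(nh)}$ for the $ZZ$ family.
\end{itemize}
The number of events is at most $d$ for $Z_j$, $d$ for $Y_i$, $d^2$ for $Z_jY_i$ and $d^2$ for $Z_aZ_b$. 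With these levels each tail in Proposition~\ref{prop:custom-bernstein} is at most $2d^{-C'^2c}$. Choosing $C'$ large makes the total failure probability at most $4d^{2}\cdot 2d^{-C'^2 c}\le d^{-c_{\rm prob}}$, absorbing the constant for $d\ge2$. The case $d=1$ is trivial after adjusting constants.

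The one check needed is that each $u_\xi\le u_0(h,R)$, since Proposition~\ref{prop:custom-bernstein} only holds in that range. This is exactly what \eqref{eq:n-conc-admissible} ensures. The condition $n\ge C_0(h^{-1}+R^2h)\log d/u_0^2$ gives $u_{ZY}\le u_0$ (and hence $u_Z,u_Y\le u_0$), and $n\ge C_0R^2\log d/(hu_0^2)$ gives $u_{ZZ}\le u_0$, for $C_0\ge C'^2$. I expect this bookkeeping to be the only delicate step. Some care is needed that the constant $c$ in Proposition~\ref{prop:custom-bernstein} depends on $(h,R)$, but since $(h,R)$ are fixed here this is harmless.

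On the good event, (i) follows from the first identity: $|\widehat F-F|\le u_{ZY}+u_Z+Cu_Y\le C_{\rm prob}\sqrt{(h^{-1}+R^2h)\log d/n}$. Part (iii) is the same bound coordinatewise, uniform over $i$ and $C$ because the event already covers all pairs $(i,j)$; no additional union over the $\binom{d}{m}$ sets is needed. For (ii), the entrywise bound is $u_{ZZ}+2Ru_Z\lesssim R\sqrt{\log d/(nh)}$. Passing from entries to operator norm costs a factor $m$ through the Frobenius norm, since $|C|\le m$, and $m$ is absorbed into $C_{\rm prob}$. This again holds simultaneously for all $C$ because it is inherited from the entrywise event.
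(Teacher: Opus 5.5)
Your proposal is correct and follows essentially the same route as the paper. You apply Proposition~\ref{prop:custom-bernstein} to the four elementary averages at levels matched to their variance proxies, with admissibility guaranteed by \eqref{eq:n-conc-admissible}; recover $\widehat F$, $\widehat g_{i,C}$, $\widehat\Sigma_C$ from the empirical-covariance identity; pass to the operator norm via Frobenius using $|C|\le m$; and finish with a union bound. The differences are minor. You bound the cross terms with $\bar Y_i\le 1$ and $\E Z\le C$ instead of $|\hat m_Z|\le R$ and $m_Y\le Ch$. You also note, correctly and a bit more economically than the paper, that the $O(d^2)$ entrywise events already cover every candidate set, so no union over the $\binom{d}{m}$ sets is needed.
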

\begin{proof}
(i) Under \eqref{eq:n-conc-admissible}, the screening/score deviation level $\sqrt{(h^{-1}+R^2 h)(\log d)/n}$ and the Gram deviation level $R\sqrt{(\log d)/(nh)}$ are both at most $u_0(h,R)$ after enlarging $C_0$ if necessary. Hence Proposition~\ref{prop:custom-bernstein} is applicable in each instance. For screening, write
\[
\hat m_{ZY}:=\frac1n\sum_{r=0}^{n-1} Z_{j,r}^{(R)}Y_{i,r}^{(h)},
\qquad
\hat m_Z:=\frac1n\sum_{r=0}^{n-1} Z_{j,r}^{(R)},
\qquad
\hat m_Y:=\frac1n\sum_{r=0}^{n-1} Y_{i,r}^{(h)},
\]
and similarly $m_{ZY},m_Z,m_Y$ for the expectations. Then by the definition at the beginning of Appendix~\ref{appendix:2}, $\widehat F_{ij}^{(h,R)}-F_{ij}^{(h,R)}
=(\hat m_{ZY}-m_{ZY})-(\hat m_Z\hat m_Y-m_Zm_Y)$. By Proposition~\ref{prop:custom-bernstein},
\[
|\hat m_{ZY}-m_{ZY}|\le C\sqrt{\frac{(h^{-1}+R^2 h)\log d}{n}},
\qquad
|\hat m_Z-m_Z|\le C\sqrt{\frac{\log d}{n h}},
\qquad
|\hat m_Y-m_Y|\le C\sqrt{\frac{h\log d}{n}}.
\]
Also $m_Y=\E[Y_{i,0}^{(h)}]\le Ch$ and $|\hat m_Z|\le R$. Hence summing and subtracting $\hat m _Zm_Y$ in $|\hat m_Z\hat m_Y-m_Zm_Y|$, using the bounds just obtained and the fact that $h\le1$ prove \eqref{eq:screen-conc}. 

(ii) Now for the Gram matrix, write
\[
\hat M_{ab}:=\frac1n\sum_{r=0}^{n-1} Z_{a,r}^{(R)}Z_{b,r}^{(R)},
\qquad M_{ab}:=\E[Z_{a,0}^{(R)}Z_{b,0}^{(R)}],
\]
and let $\hat m_a,m_a$ be the empirical and theoretical means of $Z_a^{(R)}$. Then as before, $(\widehat\Sigma_C^{(R)}-\Sigma_C^{(R)})_{ab}=(\hat M_{ab}-M_{ab})-(\hat m_a\hat m_b-m_am_b)$. Now Proposition~\ref{prop:custom-bernstein} yields
\[
|\hat M_{ab}-M_{ab}|\le C R\sqrt{\frac{\log d}{n h}},
\qquad
|\hat m_a-m_a|\le C\sqrt{\frac{\log d}{n h}}.
\]
Therefore
\[
|(\widehat\Sigma_C^{(R)}-\Sigma_C^{(R)})_{ab}|
\le C R\sqrt{\frac{\log d}{n h}} + 2R\cdot C\sqrt{\frac{\log d}{n h}}
\le C R\sqrt{\frac{\log d}{n h}}.
\]
Since $|C|\le m$ is fixed, \eqref{eq:Gram-conc} follows. 

(iii) For the score vector, write
\[
\hat s_a:=\frac1n\sum_{r=0}^{n-1} Z_{a,r}^{(R)}Y_{i,r}^{(h)},
\qquad
s_a:=\E[Z_{a,0}^{(R)}Y_{i,0}^{(h)}].
\]
Then
\[
(\widehat g_{i,C}^{(h,R)}-g_{i,C}^{(h,R)})_a=(\hat s_a-s_a)- (\hat m_a\hat m_Y-m_am_Y),
\]
so the same bounds as in the screening case give \eqref{eq:g-conc}. Finally, there are at most $d^2$ screening pairs $(i,j)$, at most $\sum_{r=0}^m\binom{d}{r}\le C_m d^m$ candidate sets $C$, and at most $d\sum_{r=0}^m\binom{d}{r}\le C_m d^{m+1}$ row-model pairs $(i,C)$. A union bound completes the proof.   
\end{proof}

\subsection{Sure screening and exact recovery}
\label{appendix:5}

\begin{proposition}
\label{prop:sure-screening}
Under the assumptions of Proposition~\ref{prop:one-bin-gap} and Lemma~\ref{lem:block-conc-consequences}, there exists a constant $C_{\scr}>0$ such that if
\begin{equation}
\label{eq:n-screen}
n\ge C_{\scr}\frac{h^{-1}+R^2 h}{\alpha^2 h^2}\log d,
\end{equation}
then with probability at least $1-d^{-c}$ one has $S_i\subset \widehat C_i$ for every $i=1,\dots,d$.
\end{proposition}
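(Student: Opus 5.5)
The argument combines the deterministic screening gap of Proposition~\ref{prop:one-bin-gap} with the uniform deviation bound (i) of Lemma~\ref{lem:block-conc-consequences}, and then uses a simple order-statistics observation. Let $\mathcal E$ be the high-probability event of Lemma~\ref{lem:block-conc-consequences}, which has probability at least $1-d^{-c_{\rm prob}}$. On $\mathcal E$,
\[
\max_{i,j}\bigl|\widehat F_{ij}^{(h,R)}-F_{ij}^{(h,R)}\bigr|\le \varepsilon_n:=C_{\rm prob}\sqrt{\frac{(h^{-1}+R^2h)\log d}{n}}
\]
holds simultaneously for all rows and coordinates. We choose $C_{\scr}$ large enough that \eqref{eq:n-screen} forces $\varepsilon_n<c_{\scr}\alpha h/2$. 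Since $\varepsilon_n^2=C_{\rm prob}^2(h^{-1}+R^2h)\log d/n$, this holds once $C_{\scr}\ge 4C_{\rm prob}^2/c_{\scr}^2$ (taking a slightly larger constant gives strict inequality). If needed we also enlarge $C_{\scr}$ so that \eqref{eq:n-screen} implies the admissibility condition \eqref{eq:n-conc-admissible}; both conditions have the same form $n\gtrsim \log d$ with constants depending on $(h,R,\alpha)$. The probability then follows with $c=c_{\rm prob}$.

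Next, fix a row $i$. If $S_i=\varnothing$, the inclusion $S_i\subset\widehat C_i$ is trivial. Otherwise, for every $j\in S_i$ and $\ell\notin S_i$, Proposition~\ref{prop:one-bin-gap} and the deviation bound give
\[
\widehat F_{ij}-\widehat F_{i\ell}\ge F_{ij}-F_{i\ell}-2\varepsilon_n\ge c_{\scr}\alpha h-2\varepsilon_n>0 .
\]
So on $\mathcal E$, every true parent has a strictly larger empirical score than every non-parent.

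The inclusion then follows by counting. Suppose some $j\in S_i$ is not in $\widehat C_i$. Since $\widehat C_i$ consists of the $m$ indices with the largest scores, $\widehat C_i$ contains $m$ indices whose scores are at least $\widehat F_{ij}$. By the strict separation, none of these can be a non-parent, so all of them lie in $S_i\setminus\{j\}$. That set has at most $k-1<m$ elements, which is a contradiction. Strictness also means tie-breaking in the top-$m$ selection causes no issue. This holds for all $i$ at once on $\mathcal E$.

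The main point needing care is the constant bookkeeping: verifying that \eqref{eq:n-screen} (and not only the weaker admissibility condition) gives $2\varepsilon_n<c_{\scr}\alpha h$, and that $(h,R)$ satisfy the hypotheses of Proposition~\ref{prop:one-bin-gap}. The latter is assumed in the statement.
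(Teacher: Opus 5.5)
Your proposal is correct and follows essentially the same route as the paper: on the event of Lemma~\ref{lem:block-conc-consequences}(i) you subtract twice the uniform deviation from the screening gap of Proposition~\ref{prop:one-bin-gap}, and you choose $C_{\scr}$ so that \eqref{eq:n-screen} makes $2\varepsilon_n$ smaller than $c_{\scr}\alpha h$ (the paper asks for $\le \tfrac12 c_{\scr}\alpha h$, which is only a different margin). Your explicit counting argument for $S_i\subset\widehat C_i$, including the remark on tie-breaking, spells out the step the paper compresses into ``since $m\ge k\ge|S_i|$''.
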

\begin{proof}
On the event \eqref{eq:screen-conc}, adding and subtracting $\min_{j\in S_i}F_{ij}$ and $\max_{j\notin S_i}F_{ij}$ and using conveniently the elementary inequality $|a_j-b_j|\ge a_j-b_j$, we get
\[
\min_{j\in S_i}\widehat F_{ij}^{(h,R)} - \max_{j\notin S_i}\widehat F_{ij}^{(h,R)}
\ge
\min_{j\in S_i}F_{ij}^{(h,R)} - \max_{j\notin S_i}F_{ij}^{(h,R)} - 2C_{\rm prob}\sqrt{\frac{(h^{-1}+R^2 h)\log d}{n}}.
\]
By Proposition~\ref{prop:one-bin-gap}, the theoretical screening gap is at least $c_{\scr}\alpha h$. Thus if
\[
2C_{\rm prob}\sqrt{\frac{(h^{-1}+R^2 h)\log d}{n}}\le \frac12 c_{\scr}\alpha h,
\]
then every support index outranks every non-support index. Since $m\ge k\ge |S_i|$, the top-$m$ empirical screening set contains $S_i$. This is implied by \eqref{eq:n-screen} after enlarging the constant.
\end{proof}
\begin{proposition}
\label{prop:ols-error}
Under the assumptions in Lemma~\ref{lem:Gram-nondeg}, Proposition~\ref{prop:population-beta}, and Lemma~\ref{lem:block-conc-consequences}, there exists a constant $C_{ols}>0$ such that if
\begin{equation}
\label{eq:n-ols}
n\ge C_{ols}\max\!\left\{\frac{R^2}{h},\frac{h^{-1}+R^2 h}{\alpha^2 h^2}\right\}\log d,
\end{equation}
then with probability at least $1-d^{-c}$ the following holds simultaneously for all rows $i$ and all candidate sets $C\supset S_i$ with $|C|\le m$:
\begin{equation}
\label{eq:ols-inf-error}
\|\widehat y_{i,C}^{(h,R)}-y_{i,C}^{(h,R)}\|_\infty
\le C_{ols}\sqrt{\frac{(h^{-1}+R^2 h)\log d}{n}}.
\end{equation}
In particular, on the sure-screening event, the same bound holds for $C=\widehat C_i$ for every $i$.
\end{proposition}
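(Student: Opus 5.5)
The approach is a deterministic matrix-perturbation argument on the high-probability event $\mathcal E$ of Lemma~\ref{lem:block-conc-consequences}, which already controls all Gram matrices and score vectors uniformly over $|C|\le m$ and over all rows. First enlarge the constant in \eqref{eq:n-ols} so that \eqref{eq:n-conc-admissible} holds; then $\Pbb(\mathcal E)\ge 1-d^{-c_{\rm prob}}$. Abbreviate $\delta:=\sqrt{(h^{-1}+R^2h)\log d/n}$ and $\delta_\Sigma:=R\sqrt{\log d/(nh)}$. On $\mathcal E$, the bounds \eqref{eq:Gram-conc} and \eqref{eq:g-conc} give $\norm{\widehat\Sigma_C^{(R)}-\Sigma_C^{(R)}}_\op\le C_{\rm prob}\delta_\Sigma$ and $\norm{\widehat g_{i,C}^{(h,R)}-g_{i,C}^{(h,R)}}_\infty\le C_{\rm prob}\delta$ for all admissible $(i,C)$.

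\emph{Invertibility.} Lemma~\ref{lem:Gram-nondeg} gives $\lambda_{\min}(\Sigma_C^{(R)})\ge\kappa$. The first term in the maximum in \eqref{eq:n-ols}, with $C_{ols}$ large, forces $C_{\rm prob}\delta_\Sigma\le\kappa/2$. Weyl's inequality then yields $\lambda_{\min}(\widehat\Sigma_C^{(R)})\ge\kappa/2$, so $\widehat\Sigma_C^{(R)}$ is invertible and $\norm{(\widehat\Sigma_C^{(R)})^{-1}}_\op\le 2/\kappa$.

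\emph{Perturbation identity.} Write
\[
\widehat y-y=(\widehat\Sigma)^{-1}(\widehat g-g)+(\widehat\Sigma)^{-1}(\Sigma-\widehat\Sigma)\,y ,
\]
with all quantities indexed by $(i,C)$. Because $|C|\le m$, $\norm{v}_\infty\le\norm{v}_2\le\sqrt m\norm{v}_\infty$, so the first term is at most $(2/\kappa)\sqrt m\,C_{\rm prob}\delta$. For the second term, bound $\norm{y}_2$ using Proposition~\ref{prop:population-beta}, which holds for $C\supset S_i$: it gives $\norm{y}_\infty\le h\theta_++\theta_-h/4\lesssim\alpha h$, hence $\norm{y}_2\le C\sqrt m\,\alpha h$. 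The second term is therefore at most $(2/\kappa)C_{\rm prob}\delta_\Sigma\sqrt m\,\alpha h$.

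\emph{Absorbing the Gram error.} It remains to show that $\delta_\Sigma\,\alpha h\lesssim\delta$. We have $\delta_\Sigma^2=R^2\log d/(nh)$ and $\delta^2\ge R^2h\log d/n$, so $\delta_\Sigma\le\delta/h$ and the second term is at most $C\alpha\delta\le C\delta$ for $\alpha\le1$. Alternatively, since $R,h,\alpha$ are fixed, any bounded ratio may be absorbed into $C_{ols}$, since the constants are allowed to depend on $(h,R)$. This is the only mildly delicate bookkeeping step. Combining the two terms gives \eqref{eq:ols-inf-error} with $C_{ols}$ depending on $(m,\kappa,C_{\rm prob},w_+)$.

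\emph{Probability and the screened set.} The second term in \eqref{eq:n-ols} is not needed for \eqref{eq:ols-inf-error} itself; it matches \eqref{eq:n-screen}, so Proposition~\ref{prop:sure-screening} also holds on $\mathcal E$. On that event $S_i\subset\widehat C_i$ and $|\widehat C_i|=m$, so $\widehat C_i$ is one of the admissible sets. The uniform bound over all $C\supset S_i$ then applies to the random set $\widehat C_i$ without further union bounds. This avoids the dependence between the data and the selected set, which is the conceptual obstacle, and it is handled by the uniformity in $C$ that is already built into Lemma~\ref{lem:block-conc-consequences}.
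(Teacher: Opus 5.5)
Your proposal is correct and matches the paper's proof: you use Weyl's inequality on the event of Lemma~\ref{lem:block-conc-consequences} to get invertibility with $\|(\widehat\Sigma_C^{(R)})^{-1}\|_{\op}\le 2/\kappa$, the same perturbation identity $\widehat y-y=(\widehat\Sigma)^{-1}[(\widehat g-g)-(\widehat\Sigma-\Sigma)y]$, and $\|y\|_2\lesssim\alpha h$ from Proposition~\ref{prop:population-beta}. You are slightly more careful than the paper in two places: you track the $\sqrt m$ factor when passing between $\ell_\infty$ and $\ell_2$, and you make explicit the absorption $\alpha h\,R\sqrt{\log d/(nh)}\le\alpha\sqrt{(h^{-1}+R^2h)\log d/n}$. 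Your side remark that sure screening itself holds on this event would also need the hypotheses of Proposition~\ref{prop:one-bin-gap}, but the statement only requires the bound on the sure-screening event, so nothing depends on it.
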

\begin{proof}
Fix $(i,C)$. On the event \eqref{eq:Gram-conc}, Lemma~\ref{lem:Gram-nondeg} leads to $\|\widehat\Sigma_C^{(R)}-\Sigma_C^{(R)}\|_{\text{op}}\le \kappa/2$. Then using Weyl's inequality $\widehat\Sigma_C^{(R)}$ is invertible and $\|(\widehat\Sigma_C^{(R)})^{-1}\|_{\text{op}}\le 2/\kappa$. Moreover, \[
y_{i,C}^{(h,R)}-y_{i,C}^{(h,R)}=(\widehat\Sigma_C^{(R)})^{-1}\Bigl[\widehat g_{i,C}^{(h,R)}-g_{i,C}^{(h,R)} + \Sigma_C^{(R)}y_{i,C}^{(h,R)} -\widehat\Sigma_C^{(R)}y_{i,C}^{(h,R)}\Bigr],
\]
then rearranging:
\[
\widehat y_{i,C}^{(h,R)}-y_{i,C}^{(h,R)}=
(\widehat\Sigma_C^{(R)})^{-1}\Bigl[(\widehat g_{i,C}^{(h,R)}-g_{i,C}^{(h,R)}) - (\widehat\Sigma_C^{(R)}-\Sigma_C^{(R)})y_{i,C}^{(h,R)}\Bigr].
\]
Proposition~\ref{prop:population-beta} implies $\|y_{i,C}^{(h,R)}\|_\infty\le C\alpha h$ uniformly, hence $\|y_{i,C}^{(h,R)}\|_2\le C_{m}\alpha h$. Therefore \eqref{eq:Gram-conc} and \eqref{eq:g-conc} give
\[
\|\widehat y_{i,C}^{(h,R)}-y_{i,C}^{(h,R)}\|_\infty
\le
\frac{2}{\kappa}
\Bigl(
\|\widehat g_{i,C}^{(h,R)}-g_{i,C}^{(h,R)}\|_\infty + C_m\alpha h\|\widehat\Sigma_C^{(R)}-\Sigma_C^{(R)}\|_{\text{op}}\Bigr)
\le C_{ols}\sqrt{\frac{(h^{-1}+R^2 h)\log d}{n}}.
\]
A union bound over all row/model pairs is already built into Lemma~\ref{lem:block-conc-consequences}. The requirement \eqref{eq:n-ols} guarantees both the invertibility condition and the final bound above.
\end{proof}

We can now proof the upper bound in Theorem~\ref{thm:upper} from the main text.
\begin{proof}[Proof of Theorem~\ref{thm:upper}]
Choose $\alpha_0>0$ small enough that Propositions~\ref{prop:one-bin-gap}, \ref{prop:population-beta}, and Lemma~\ref{lem:Gram-nondeg} all apply for every $0<\alpha\le \alpha_0$. From the proofs of Proposition~\ref{prop:clipped-screen-gap}, Lemma~\ref{lem:Gram-nondeg}, and Proposition~\ref{prop:population-beta}, choose
\[
A_R:=\max\left\{\frac{16\beta C_{\rm clip}}{\mu_-w_-},\,\frac{16\beta C_m}{\mu_-},\,\frac{8C_y}{w_-},\,1\right\},
\]
and set $R(\alpha):=A_R\alpha^{-1}$. Next choose
\[
A_h:=\frac12\min\left\{1,\frac{\mu_-w_-}{32\beta C_{\rm bin}A_R},\frac{w_-}{8C_yA_R}\right\},
\]
and set $h(\alpha):=A_h\alpha^2$. With this pair, the screening gap \eqref{eq:F-gap}, the theoretical least-squares slope separation \eqref{eq:population-beta-threshold}, and the Gram nondegeneracy all hold simultaneously.

Let $n=\lfloor T/h(\alpha)\rfloor$. Once $\alpha$ is fixed, the substituted values $h(\alpha)$ and $R(\alpha)$ are also fixed, so the constants from Lemma~\ref{lem:block-conc-consequences}, Proposition~\ref{prop:sure-screening}, and Proposition~\ref{prop:ols-error} become finite numbers depending on $\alpha$. Besides the screening and OLS requirements from Propositions~\ref{prop:sure-screening} and \ref{prop:ols-error}, we must also satisfy the admissible-range condition \eqref{eq:n-conc-admissible}. For $\alpha\le \alpha_0$ we have $h(\alpha)\le h_0=1$ and $R(\alpha)\ge 1$, so $u_0(h(\alpha),R(\alpha))\ge c h(\alpha)^2/R(\alpha)$. Therefore, for this fixed
\(\alpha\), all remaining requirements are implied by
\[
n\ge A_\alpha \log d,
\]
where
\[
A_\alpha
:=
C_0\max\left\{
\frac{h(\alpha)^{-1}+R(\alpha)^2 h(\alpha)}{\alpha^2 h(\alpha)^2},
\frac{R(\alpha)^2}{h(\alpha)},
\frac{\bigl(h(\alpha)^{-1}+R(\alpha)^2 h(\alpha)\bigr)R(\alpha)^2}{h(\alpha)^4},
\frac{R(\alpha)^4}{h(\alpha)^5}
\right\}.
\]
Now choose \(C_\ast>0\), depending only on
\((\beta,\mu_\pm,w_\pm,k,m,\alpha)\), large enough that, for every integer
\(d\ge2\),
\[
T\ge C_\ast\log d
\]
implies both \(T\ge1\) and $n=\lfloor T/h(\alpha)\rfloor\ge A_\alpha\log d$.
Hence Lemma~\ref{lem:block-conc-consequences},
Proposition~\ref{prop:sure-screening}, and Proposition~\ref{prop:ols-error}
apply simultaneously.
Now, with probability at least $1-d^{-c_*}$ one has simultaneously
\[
S_i\subset \widehat C_i\qquad\text{for all }i,
\]
and
\[
\norm{\widehat y_{i,\widehat C_i}^{(h,R)}-y_{i,\widehat C_i}^{(h,R)}}_\infty
\le \frac18 \alpha w_- h.
\]
Then for every row $i$, because $\widehat C_i\supset S_i$ and \eqref{eq:population-beta-threshold} applies with $C=\widehat C_i$,
\[
\min_{j\in S_i}\widehat y_{ij}
\ge
\min_{j\in S_i}y_{ij,\widehat C_i}^{(h,R)} - \frac18\alpha w_- h
\ge \frac58 \alpha w_- h > \frac12 \alpha w_- h = \tau_{\alpha,h},
\]
while for $j\in \widehat C_i\setminus S_i$,
\[
\abs{\widehat y_{ij}}
\le
\abs{y_{ij,\widehat C_i}^{(h,R)}} + \frac18\alpha w_- h
\le \frac38 \alpha w_- h < \frac12 \alpha w_- h = \tau_{\alpha,h}.
\]
Hence thresholding at $\tau_{\alpha,h}$ recovers $S_i$ exactly for every row. The overall success probability is at least $1-d^{-c_*}$ for some $c_*>0$.\qedhere
\end{proof}

\section{An information-theoretic view of support recovery}
\label{app:it-intro}

We briefly introduce the information-theoretic perspective of support recovery that is used in the lower bound.
We do not derive any new statements, but instead aim to explain why entropy, mutual information, KL divergence, and Fano's inequality naturally appear in exact support recovery.

Information theory studies how much can be learned about an unknown object from noisy observations. Let \(V\) be a discrete random variable taking values in a finite set \(\mathcal V\), and suppose that, conditional on \(V=v\), the observation \(Y\) has law \(P_v\). In communication language, \(V\) is the message, the stochastic mechanism \(v\mapsto P_v\) is a noisy channel, and an estimator \(\widehat V=\widehat V(Y)\) is a decoder. In a statistical problem,
the same viewpoint applies when \(V\) indexes a finite collection of hypotheses or models.

Support recovery fits naturally into this framework because the target is discrete. In the hard subclass used below, the unknown object is the parent set
\[
  S\in\mathcal S_{d,k}(i_\star),
  \qquad
  |\mathcal S_{d,k}(i_\star)|=\binom{d-1}{k}.
\]
Thus \(S\) can be viewed as a message chosen from \(\mathcal S_{d,k}(i_\star)\). The Hawkes process acts as the noisy channel: conditional on \(S\), it produces the observation
\[
  Y_T=(X(0),N_{[0,T]})
\]
with law \(P_S^T\). A support estimator \(\widehat S=\widehat S(Y_T)\) is a decoder trying to infer which message was sent.

The entropy of a discrete random variable \(V\) is
\[
  H(V):=-\sum_{v\in\mathcal V}\Pbb(V=v)\log \Pbb(V=v),
\]
where logarithms are natural, so information is measured in nats.
Entropy measures the uncertainty in \(V\) before observing any data.
If \(V\) is uniform on \(\mathcal V\), then \(H(V)=\log|\mathcal V|\).
Hence choosing one object among \(M\) equally likely alternatives requires \(\log M\) nats of information. 
In our support-recovery problem, a uniformly chosen parent set satisfies
\[
  H(S)=\log |\mathcal S_{d,k}(i_\star)|
  =
  \log\binom{d-1}{k}.
\]
For fixed \(k\), this quantity is of order \(\log d\). This is the basic combinatorial uncertainty that the data must resolve.

After observing \(Y\), the remaining uncertainty about \(V\) is measured by the conditional entropy \(H(V\mid Y)\). The reduction in uncertainty caused by the observation is the mutual information
\[
  I(V;Y):=H(V)-H(V\mid Y).
\]
If \(Y\) is independent of \(V\), then \(I(V;Y)=0\). If \(Y\) determines \(V\) exactly, then \(I(V;Y)=H(V)\). Thus mutual information measures how much of the initial uncertainty about the message is actually transmitted through the noisy channel.

Mutual information can also be expressed using KL divergence. For probability measures \(P\) and \(Q\),
\[
  \KL(P\|Q)
  :=
  \E_P\left[\log\frac{dP}{dQ}\right],
\]
whenever \(P\) is absolutely continuous with respect to \(Q\). KL divergence is not a metric, but it measures statistical distinguishability: small KL divergence means that observations from \(P\) are hard to distinguish from observations from \(Q\). If \(V\) has prior distribution \(\pi\), and
\[
  \bar P:=\sum_{v\in\mathcal V}\pi_v P_v
\]
is the mixture law of \(Y\), then
\[
  I(V;Y)
  =
  \sum_{v\in\mathcal V}\pi_v \KL(P_v\|\bar P).
\]
Thus the information carried by \(Y\) about \(V\) is the average distinguishability between the law generated by a fixed message and the overall mixture law. 
To get a lower-bound, it is sufficeint to compare against a convenient reference law \(Q\):
\[
  I(V;Y)
  \le
  \sum_{v\in\mathcal V}\pi_v \KL(P_v\|Q).
\]

Fano's inequality is the step that turns an information bound into an error lower bound. 
The idea is the following. 
Suppose \(V\) is uniform on a finite set \(\mathcal V\) with \(M=|\mathcal V|\), and let \(\widehat V=\widehat V(Y)\) be any estimator. 
If the estimator is usually correct, then after seeing \(Y\) there cannot be much uncertainty left about \(V\): the value \(\widehat V(Y)\) almost identifies the true message. 
The only substantial uncertainty comes from the event \(\{\widehat V\neq V\}\). 
On the event of no error, the message is known from the decoder's output. On the error event, the true message may still be one of many alternatives. 
Therefore a small error probability forces the conditional
entropy \(H(V\mid Y)\) to be small.

Fano's inequality formalizes the converse implication. If the observation leaves large conditional entropy, then no estimator can have small error probability. 
Equivalently, since
\[
  H(V\mid Y)=H(V)-I(V;Y),
\]
if the mutual information \(I(V;Y)\) is much smaller than the initial uncertainty \(H(V)=\log M\), then the observation has not conveyed enough information to identify the message. The finite-hypothesis form used here is
\[
  \Pbb(\widehat V\neq V)
  \ge
  1-\frac{I(V;Y)+\log 2}{\log M}.
\]
The additive \(\log 2\) term accounts for the binary uncertainty of whether an error occurred, and is negligible when \(M\) is large. 
The main message of the inequality is that reliable identification among \(M\) nearly hidden possibilities requires nearly \(\log M\) nats of mutual information.

This average-error statement immediately gives a minimax lower bound. For any prior \(\pi\),
\[
  \sup_{v\in\mathcal V}\Pbb_v(\widehat V\neq v)
  \ge 
  \sum_{v\in\mathcal V}\pi_v \Pbb_v(\widehat V\neq v).
\]
Hence, if Fano's inequality shows that the prior-average error is bounded away from zero, then every estimator must fail with non-negligible probability for at least one hypothesis in the class.

In our Hawkes setting, the message set is
\[
  \mathcal V=\mathcal S_{d,k}(i_\star),
  \qquad
  M=\binom{d-1}{k}
\]
and for fixed \(k\), \(\log M\asymp \log d\). We show that the observation up to time \(T\) carries at most
\[
  C_{\mathrm{init}}+C_{\mathrm{path}}T
\]
nats of information about this support, with constants independent of \(d\) and \(T\). Fano's inequality then compares the information available from the data to the information needed to identify the support. 
If \(T=o(\log d)\), the data contains too little information to distinguish among the possible parent sets, and the probability of an error is bounded away from zero.

\section{Lower bound proof details}\label{app:lower-bound}
Here, we provide the full proof of Theorem~\ref{thm:lower-bound}. 
As before, the unknown parent set is $S\in\mathcal S_{d,k}(i_\star)$, the active edge weight in the subclass is $\thetalo$, and the target and source background rates are $\bar\mu_\star$ and $\bar\mu$. Let $P_S^T$ be the law of the stationary observation $Y_T=(X(0),N_{[0,T]})$ under support $S$, and let $P_0^T$ denote the law of the reference model with no interactions and the same background rates. We write $\pi_S$ and $\pi_0$ for the corresponding stationary laws of $X(0)$, and $K_S^T(x,\cdot)$ and $K_0^T(x,\cdot)$ for the conditional laws of the path $N_{[0,T]}$ given $X(0)=x$.

The proof is structured in four parts. In Section~\ref{app:lower-prelim}, we collect two standard entropy facts and properties of one-dimensional Poisson shot-noise density function needed for the stationary initial
condition. In Section~\ref{app:lower-initial}, we reduce the initial KL term to a fixed $(k+1)$-dimensional
problem. In Section~\ref{app:lower-dynamic}, we bound the dynamic path KL by a second moment of the source
block. Finally, in Section~\ref{app:lower-fano} we apply Fano's inequality to prove Theorem~\ref{thm:lower-bound}.

\subsection{Preparatory lemmas}\label{app:lower-prelim}
First, we prove the chain-rule for the Kullback--Leibler divergence.
\begin{lemma}\label{lem:app-chain}
For every $S\in\mathcal S_{d,k}(i_\star)$,
\[
\KL(P_S^T\|P_0^T)
=
\KL(\pi_S\|\pi_0)
+
\int \KL\!\bigl(K_S^T(x,\cdot)\|K_0^T(x,\cdot)\bigr)\,\pi_S(dx),
\]
with equality in $[0,\infty]$.
\end{lemma}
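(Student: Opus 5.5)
The approach is the standard chain rule for relative entropy under a disintegration. The observation space is a product $E_0\times E_1$: $E_0=\R_+^d$ carries the initial state $X(0)$, and $E_1$ is the space of $d$-variate counting paths on $[0,T]$ (a Polish space, e.g.\ a subset of a Skorokhod space). Both $P_S^T$ and $P_0^T$ disintegrate as $P^T_\bullet(dx,d\omega)=\pi_\bullet(dx)\,K_\bullet^T(x,d\omega)$, where $K_\bullet^T$ is a regular conditional distribution. Existence of $K_\bullet^T$ is guaranteed because $E_1$ is standard Borel. Concretely, $K_\bullet^T(x,\cdot)$ can be identified with the law of the Markov dynamics started from state $x$, since the exponential Hawkes process is Markov in $X$.

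I will first treat the case $\KL(\pi_S\|\pi_0)<\infty$ and $\pi_S$-a.e.\ $K_S^T(x,\cdot)\ll K_0^T(x,\cdot)$. Choose a jointly measurable version $\ell(x,\omega)$ of the densities $dK_S^T(x,\cdot)/dK_0^T(x,\cdot)$. Joint measurability can be obtained via the standard lemma on measurable Radon--Nikodym derivatives for kernels on countably generated spaces, or directly from Jacod's explicit likelihood formula. Then $\frac{d\pi_S}{d\pi_0}(x)\,\ell(x,\omega)$ is a version of $dP_S^T/dP_0^T$. This is checked on rectangles $A\times B$ and extended by a $\pi$--$\lambda$ argument. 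Taking $\E_{P_S^T}$ of the logarithm and splitting the sum gives
\[
\E_{P_S^T}\Bigl[\log\tfrac{d\pi_S}{d\pi_0}\Bigr]+\int \E_{K_S^T(x,\cdot)}[\log \ell(x,\cdot)]\,\pi_S(dx).
\]
The first term is $\KL(\pi_S\|\pi_0)$ and the inner expectation is $\KL(K_S^T(x,\cdot)\|K_0^T(x,\cdot))$.

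The main obstacle is justifying this split when the integrands are not integrable, since $\log$ of a density can have an infinite negative part. I would handle it in the usual way. The negative parts satisfy $\E_P[(\log\frac{dP}{dQ})^-]\le 1/e$, because $x\log x\ge -1/e$. So every expectation above is well defined in $(-\infty,\infty]$, and linearity holds in $[0,\infty]$ after Tonelli is applied separately to positive and negative parts. Nonnegativity of each conditional KL makes the $\pi_S$-integral meaningful.

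Finally, if absolute continuity fails, I argue both sides are $+\infty$. If $\pi_S\not\ll\pi_0$, there is a set $A$ with $\pi_0(A)=0<\pi_S(A)$, and then $P_0^T(A\times E_1)=0<P_S^T(A\times E_1)$, so the left side is infinite. If instead the kernels fail to be absolutely continuous on a set of positive $\pi_S$-measure, take the measurable set of pairs $(x,\omega)$ where the singular part lives, using a measurable Lebesgue decomposition of kernels. This set has $P_0^T$-measure zero and positive $P_S^T$-measure. Conversely, if $P_S^T\ll P_0^T$, then both marginals and the kernels are a.e.\ absolutely continuous, so the cases match. That yields equality in $[0,\infty]$.
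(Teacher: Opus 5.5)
Your proposal is correct and follows the paper's route: you disintegrate $P_S^T$ and $P_0^T$ along $X(0)$, factor the Radon--Nikodym derivative as $\frac{d\pi_S}{d\pi_0}(x)\,\ell(x,\omega)$, and split the logarithm, and for the extended-real case you argue directly (the $1/e$ bound on negative parts plus a measurable Lebesgue decomposition of the kernels) where the paper only cites an unspecified approximation argument. One small fix: your first case never uses $\KL(\pi_S\|\pi_0)<\infty$, only $\pi_S\ll\pi_0$, so drop that restriction; otherwise the case of absolutely continuous marginals with infinite marginal divergence is left out of your case list.
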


\begin{proof}
This is the standard chain rule for relative entropy of probability kernels. When $P_S^T\ll P_0^T$, write
\[
P_S^T(dx,d\eta)=\pi_S(dx)K_S^T(x,d\eta),
\qquad
P_0^T(dx,d\eta)=\pi_0(dx)K_0^T(x,d\eta),
\]
and expand the Radon--Nikodym derivative. The extended-real case follows by the usual approximation argument for relative entropy of
kernels.
\end{proof}
The next lemma bounds the mutual information by the average KL to a baseline.
\begin{lemma}
\label{lem:app-mi-baseline}
Let $V$ be uniform on a finite set $\mathcal V$, and let $Y$ be an
observation whose law under $V=v$ is $P_v$. Then for any probability
measure $Q$,
\[
I(V;Y)\le \frac{1}{|\mathcal V|}\sum_{v\in\mathcal V}\KL(P_v\|Q).
\]
\end{lemma}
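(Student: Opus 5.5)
The plan is to use the ``golden formula'' decomposition of mutual information through the mixture law. Set $\bar P:=|\mathcal V|^{-1}\sum_{v}P_v$, the marginal law of $Y$. We may assume that the right-hand side is finite; otherwise there is nothing to prove. Then $P_v\ll Q$ for every $v$, and hence $\bar P\ll Q$.

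First, we record the standard identity $I(V;Y)=\frac{1}{|\mathcal V|}\sum_v\KL(P_v\|\bar P)$, as recalled in Appendix~\ref{app:it-intro}. It follows from writing the joint law as $\pi\otimes P_v$ and the product of marginals as $\pi\otimes\bar P$, and noting that $P_v\ll\bar P$ with $dP_v/d\bar P\le|\mathcal V|$.

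Second, we expand each divergence against the reference measure $Q$ using the chain rule for Radon--Nikodym derivatives, $\log\frac{dP_v}{dQ}=\log\frac{dP_v}{d\bar P}+\log\frac{d\bar P}{dQ}$, which holds $P_v$-a.s. Integrating against $P_v$ and averaging over $v$ gives
\[
\frac{1}{|\mathcal V|}\sum_v\KL(P_v\|Q)=\frac{1}{|\mathcal V|}\sum_v\KL(P_v\|\bar P)+\KL(\bar P\|Q),
\]
because averaging the $P_v$-integrals of $\log\frac{d\bar P}{dQ}$ produces its $\bar P$-integral. Since $\KL(\bar P\|Q)\ge 0$, the claimed inequality follows.

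The only delicate step is integrability. The positive part of $\log\frac{dP_v}{d\bar P}$ is bounded by $\log|\mathcal V|$, so the first term is well defined. The term $\log\frac{d\bar P}{dQ}$ may have a negative part that is not integrable, and the splitting of the integral must be justified. To handle this, I would use convexity of $\KL$ in its first argument, which gives $\KL(\bar P\|Q)\le\frac1{|\mathcal V|}\sum_v\KL(P_v\|Q)<\infty$. This shows that $\log\frac{d\bar P}{dQ}$ is quasi-integrable under $\bar P$, and its $\bar P$-integral is finite. Because $P_v\le|\mathcal V|\,\bar P$, its negative part is then $P_v$-integrable as well, so the splitting is legitimate.
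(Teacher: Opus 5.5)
Your proposal is correct and follows the same route as the paper. You decompose through the mixture $\bar P$, split $\log(dP_v/dQ)=\log(dP_v/d\bar P)+\log(d\bar P/dQ)$, average over $v$, identify the first term with $I(V;Y)$, and drop $\KL(\bar P\|Q)\ge 0$. Your extra integrability justification, which the paper omits, is valid but longer than needed: once $\KL(P_v\|Q)<\infty$ and $dP_v/d\bar P\le|\mathcal V|$, both $\log(dP_v/dQ)$ and $\log(dP_v/d\bar P)$ already lie in $L^1(P_v)$, so the convexity detour can be skipped.
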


\begin{proof}
Let $\bar P:=|\mathcal V|^{-1}\sum_{v\in\mathcal V}P_v$ be the mixture law.
Then
\[
\frac1{|\mathcal V|}\sum_v \KL(P_v\|Q)
=\frac1{|\mathcal V|}\sum_v \KL(P_v\|\bar P)+\KL(\bar P\|Q)
=I(V;Y)+\KL(\bar P\|Q),
\]
follows by writing
$\log(dP_v/dQ)=\log(dP_v/d\bar P)+\log(d\bar P/dQ)$,
integrating under $P_v$, and averaging over $v$. The claim follows from
$\KL(\bar P\|Q)\ge 0$.
\end{proof}

The next result collects several properties of the density of the stationary law of
one-dimensional Poisson shot noise. It is later used to show that translated stationary densities have finite KL cost.

For $r>0$, let $\pi_r^{\mathrm P}$ be the stationary law of
$dU(t)=-\beta U(t)\,dt+dM(t)$, where $M$ is Poisson with rate $r$.
Equivalently,
\[
U_r\stackrel{d}{=}\int_{(-\infty,0]} e^{\beta s}\,dM_s
\stackrel{d}{=}\int_0^\infty e^{-\beta t}\,dM_t
\]
and its characteristic function is
\begin{equation}\label{eq:app-1d-CF}
\E\bigl[e^{itU_r}\bigr]
=\exp\!\left(\frac{r}{\beta}\int_0^1\frac{e^{itu}-1}{u}\,du\right),
\qquad t\in\R.
\end{equation}
Thus $U_r$ is infinitely divisible with L\'evy density $\nu_r(du)=(r/\beta)\mathbf1_{(0,1)}(u)u^{-1}\,du$.

\begin{lemma}
\label{lem:app-1d-density}
Fix $r>0$ and set $\alpha=r/\beta$. The law $\pi_r^{\mathrm P}$ has a density
$q_r$ on $(0,\infty)$ such that:
{\renewcommand{\labelenumi}{\textnormal{(\roman{enumi})}}
\begin{enumerate}
\item $q_r$ is strictly positive on $(0,\infty)$ and unimodal.
\item There exists a constant $c_r>0$ such that
\[
q_r(x)=c_r x^{\alpha-1}
\quad\text{for Lebesgue-a.e. }x\in(0,1).
\]
\item There exists a finite constant $C_r$ such that for Lebesgue-a.e.
$x>0$,
\begin{equation}\label{eq:app-global-log-q}
-\log q_r(x)\le C_r\bigl(1+x\log(1+x)+|\log x|\bigr).
\end{equation}
\item $U_r$ has exponential moments of every order,
\begin{equation}\label{eq:app-mgf-Ur}
\E[e^{\eta U_r}]
=\exp\!\left(\frac{r}{\beta}\int_0^1\frac{e^{\eta u}-1}{u}\,du\right)
<\infty,
\qquad \eta>0,
\end{equation}
and hence
$\E[U_r\log(1+U_r)]<\infty$, $\E[|\log U_r|]<\infty$, and
$\int q_r|\log q_r|<\infty$.
\end{enumerate}
}
\end{lemma}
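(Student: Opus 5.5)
We derive everything from the infinitely divisible structure in \eqref{eq:app-1d-CF}, with $\alpha=r/\beta$ (note this $\alpha$ is local to the lemma). The Lévy measure $\nu_r(du)=\alpha u^{-1}\mathbf 1_{(0,1)}(u)\,du$ has total mass $\int_0^1 \alpha u^{-1}\,du=\infty$, so the law has no atoms. The Lévy density $k(u)=\alpha\mathbf 1_{(0,1)}(u)$, viewed against $du/u$, is nonincreasing on $(0,\infty)$. Hence $U_r$ is a positive self-decomposable random variable, and we expect it to be the Dickman-type distribution. Self-decomposability plus infinite Lévy mass gives absolute continuity and unimodality (Sato, Thm.~27.7, Thm.~53.1). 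Strict positivity on $(0,\infty)$ follows because the support of an infinitely divisible law with support of $\nu_r$ equal to $[0,1]$ and zero drift is $[0,\infty)$ (Sato, Thm.~24.10), and unimodality upgrades nonnegativity to strict positivity on the interior. This settles (i).

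For (ii) we use the integral (Dickman) equation. Differentiating $\log\E[e^{itU_r}]$ in $t$ gives $\E[U_r e^{itU_r}]=\alpha\,\E[e^{itU_r}]\int_0^1 e^{itu}\,du$. In density form this reads
\[
x\,q_r(x)=\alpha\int_{x-1}^{x}q_r(y)\,dy ,
\]
with $q_r=0$ on $(-\infty,0)$. For $x\in(0,1)$ this is $x q_r(x)=\alpha F(x)$ with $F$ the distribution function. The ODE $F'=\alpha F/x$ then gives $F(x)=F(1)x^{\alpha}$ and $q_r(x)=\alpha F(1)x^{\alpha-1}$, i.e.\ $c_r=\alpha F(1)>0$ (positive by (i)).

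For (iv), the mgf formula follows by analytic continuation of the characteristic function, since $\int_0^1 (e^{\eta u}-1)u^{-1}\,du<\infty$ for every $\eta$. Finiteness of $\E[U_r\log(1+U_r)]$ is then immediate. $\E|\log U_r|<\infty$ follows from (ii): near zero $|\log x|\,x^{\alpha-1}$ is integrable, and at infinity $\log x\le x$. The claim $\int q_r|\log q_r|<\infty$ will follow from (iii) for the negative part. For the positive part we use that $q_r$ is bounded away from $0$ by a unimodal bound: $\log^+ q_r$ is controlled by $\log^+$ of $c_r x^{\alpha-1}$ near $0$ (integrable against $q_r$), and by the mode value elsewhere. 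The mode value is finite when $\alpha>1$; when $\alpha\le1$ the density is decreasing and the near-zero formula already covers the singularity.

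The main obstacle is (iii), the global lower bound $-\log q_r(x)\lesssim 1+x\log(1+x)+|\log x|$. On $(0,1)$ it is exact from (ii). For large $x$ we plan an inductive lower bound from the integral equation. Setting $m_n:=\inf_{[n,n+1]}q_r$, the equation gives, for $x\in[n,n+1]$, $q_r(x)\ge (\alpha/x)\int_{x-1}^{x}q_r\ge (\alpha/(n+1))\min(m_{n-1},m_n)$. Unimodality lets us pass the infimum across consecutive intervals past the mode. This should produce the recursion $m_n\ge \frac{\alpha}{n+1}m_{n-1}$ beyond a fixed index, hence $m_n\ge c\,\alpha^n/(n+1)!$. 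Stirling then gives $-\log m_n\le C(1+n\log(1+n))$. Two points need care. The first is the initial block $[1,2]$, handled by continuity and positivity of $q_r$ on compacts: $q_r$ is continuous away from $0$ by the integral equation, since the right-hand side is continuous. The second is the unimodality bookkeeping, which ensures the infimum over $[x-1,x]$ is attained at an endpoint. Statements are only needed Lebesgue-a.e., so working with the continuous version suffices.
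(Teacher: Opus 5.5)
Your treatment of (i), (ii) and (iv) is sound. Obtaining $x\,q_r(x)=\alpha\int_{x-1}^{x}q_r(y)\,dy$ from the characteristic function is a legitimate substitute for the paper's stationarity identity with test functions in $C_c^1(0,1)$. It also gives a continuous, strictly positive version with $q_r(x)\le\alpha/x$, which controls $\log^+q_r$ directly.

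The genuine gap is in (iii), at the step you yourself flagged as the main obstacle. The inequality $m_n\ge\frac{\alpha}{n+1}\min(m_{n-1},m_n)$ is correct, but beyond the mode it is vacuous. There $q_r$ is nonincreasing, so $m_{n-1}=q_r(n)\ge q_r(n+1)=m_n$, the minimum is $m_n$ itself, and you only learn $m_n\ge\frac{\alpha}{n+1}m_n$. Unimodality is what makes the recursion degenerate, not what rescues it. Worse, the recursion you want goes the wrong way. Evaluating the integral equation at $x=n+1$ gives
\[
m_n=q_r(n+1)=\frac{\alpha}{n+1}\int_n^{n+1}q_r(y)\,dy\le\frac{\alpha}{n+1}\,q_r(n)=\frac{\alpha}{n+1}\,m_{n-1}.
\]
Its claimed consequence $m_n\ge c\,\alpha^n/(n+1)!$ is false. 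Chernoff with the mgf in (iv), at $\eta=\log x+\log\log x-\log\alpha$, combined with $q_r(x)\le\frac{\alpha}{x}\Pbb(U_r\ge x-1)$, gives $q_r(x)\le\exp\{-x\log x-x\log\log x+O(x)\}$. This is eventually much smaller than $\alpha^x/\Gamma(x+2)=\exp\{-x\log x+O(x)\}$. The target bound (iii) is still true, but only because $x\log\log x=O(x\log x)$; your argument as written does not establish it.

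The missing idea is to take the recursion step strictly shorter than the window, so that the window reaches back into a region where $q_r$ is already controlled. If $x-1$ lies beyond the mode,
\[
x\,q_r(x)\ge\alpha\int_{x-1}^{x-1/2}q_r(y)\,dy\ge\frac{\alpha}{2}\,q_r\bigl(x-\tfrac12\bigr).
\]
Iterate $q_r(x)\ge\frac{\alpha}{2x}\,q_r(x-\frac12)$ about $2x$ times down to a compact base interval, where your continuous positive version has a positive minimum. This gives $-\log q_r(x)\le C(1+x\log(1+x))$. On the half-grid $\nu_j:=\inf_{[j/2,(j+1)/2]}q_r$ you even get $\nu_{j+2}\ge\frac{\alpha}{j+3}\min(\nu_j,\nu_{j+1})$ without unimodality, and this recursion is no longer self-referential.

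The paper avoids the integral equation at this point. It lower-bounds $\Pbb(U_r\in[n,n+1])\ge c_1e^{-c_2n\log n}$ directly, by forcing exactly $n$ Poisson arrivals into $[0,\delta/n]$, none in $(\delta/n,1]$, and a remainder $U_r^{(3)}\in[\beta\delta,1]$. It then transfers this to a pointwise bound through $q_r(n)\ge\int_n^{n+1}q_r$ past the mode.
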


\begin{proof}
The L\'evy measure has density $k(u)u^{-1}\,du$ with
$k(u)=\alpha\mathbf 1_{(0,1)}(u)$ nonincreasing. Sato
\cite[Theorem~15.10 and Corollary~15.11]{sato1999levy} therefore implies
that $U_r$ is self-decomposable. Absolute continuity, strict positivity on
$(0,\infty)$, and unimodality then follow from Sato
\cite[Theorem~27.13, Lemma~53.1, and Theorem~53.1]{sato1999levy}. This proves
(i).

For (ii), stationarity gives, for every $f\in C_c^1(0,\infty)$,
\[
\int_0^\infty \{ -\beta x f'(x)+r(f(x+1)-f(x)) \}q_r(x)\,dx=0.
\]
Taking $f\in C_c^1(0,1)$ removes the jump-in term and yields
$\beta(xq_r)'-rq_r=0$ in $\mathcal D'((0,1))$. Hence
$(x^{1-\alpha}q_r)'=0$ distributionally, so
$q_r(x)=c_r x^{\alpha-1}$ for a.e. $x\in(0,1)$; positivity gives $c_r>0$.
After redefining \(q_r\) on a null set, we may take this identity to hold
pointwise on \((0,1)\).

For (iii), use a unimodal version of $q_r$, and choose a point $m_r\ge0$ such
that $q_r$ is nonincreasing on $[m_r,\infty)$. Fix
$\delta\in(0,\min\{1/(2\beta),1\})$. For $n\ge1$, decompose
$U_r=U_r^{(1,n)}+U_r^{(2,n)}+U_r^{(3)}$ over the time intervals
$[0,\delta/n]$, $(\delta/n,1]$, and $(1,\infty)$ in the representation
$U_r=\int_0^\infty e^{-\beta t}\,dM_t$. Since
$U_r^{(3)}\stackrel d=e^{-\beta}U_r'$ with $U_r'$ an independent copy of
$U_r$, strict positivity of the density gives
$p_\delta:=\Pbb(U_r^{(3)}\in[\beta\delta,1])>0$. On the event
\[
A_n:=\{M([0,\delta/n])=n\},\quad
B_n:=\{M((\delta/n,1])=0\},\quad
C:=\{U_r^{(3)}\in[\beta\delta,1]\},
\]
one has $U_r\in[n,n+1]$: indeed
$U_r^{(1,n)}\ge n e^{-\beta\delta/n}\ge n-\beta\delta$, while
$U_r^{(2,n)}=0$ and $U_r^{(3)}\in[\beta\delta,1]$. Independence and Stirling's
upper bound give constants $c_1,c_2>0$, depending only on $(r,\beta)$, such
that
\[
\Pbb(U_r\in[n,n+1])
\ge e^{-r\delta/n}\frac{(r\delta/n)^n}{n!}\,e^{-r(1-\delta/n)}p_\delta
\ge c_1 e^{-c_2 n\log n}.
\]
For integers $n\ge\lceil m_r\rceil$, monotonicity gives
$q_r(n)\ge\int_n^{n+1}q_r(x)\,dx$, hence
$q_r(n)\ge c_1e^{-c_2n\log n}$. If $x\ge m_r+1$ and $n=\lceil x\rceil$, then
$q_r(x)\ge q_r(n)$, and therefore
\[
-\log q_r(x)\le C\{1+x\log(1+x)\},\qquad x\ge m_r+1.
\]
Near zero, part (ii) gives
$-\log q_r(x)\le C(1+|\log x|)$ for a.e. $x\in(0,1)$. 
As before, use a unimodal version so that \(q_r\) is nondecreasing on \((0,m_r]\)
and nonincreasing on \([m_r,\infty)\). On the remaining compact interval
\([1,m_r+1]\), this version is strictly positive, and by unimodality its
infimum is attained at one of the endpoints. Hence the infimum is positive.
Combining these three bounds proves \eqref{eq:app-global-log-q}.

Finally, (iv) follows from the Poisson shot-noise formula
\[
\log \E[e^{\eta U_r}]
= r\int_0^\infty (e^{\eta e^{-\beta t}}-1)\,dt
= \frac{r}{\beta}\int_0^1\frac{e^{\eta u}-1}{u}\,du<\infty,
\qquad \eta>0,
\]
because $e^{\eta u}-1\le u(e^\eta-1)$ on $[0,1]$. The exponential moments
imply $\E[U_r\log(1+U_r)]<\infty$ and $\E[\log_+ U_r]<\infty$. For the
negative logarithmic moment, (ii) gives
\[
\int_0^1 |\log x|q_r(x)\,dx
= c_r\int_0^1 |\log x|x^{\alpha-1}\,dx<\infty,
\]
so $\E[|\log U_r|]<\infty$.
Moreover, (ii) gives integrability of \(q_r|\log q_r|\) near zero. On compact
subintervals of \((0,\infty)\), the chosen unimodal version is locally bounded
by monotonicity on the two sides of its mode, after redefining the value at the
mode if necessary; hence \(q_r\log_+q_r\) is integrable there. On the tail
\([m_r+1,\infty)\), unimodality gives \(q_r(x)\le q_r(m_r+1)\), so the
positive part is bounded by a constant times \(q_r\). The negative part is
controlled by \eqref{eq:app-global-log-q} and the moment bounds above.
Therefore \(\int q_r|\log q_r|<\infty\).
\end{proof}

We now use the properties from Lemma~\ref{lem:app-1d-density} to control the KL of translating the target's baseline stationary law by a nonnegative random amount.

\begin{lemma}\label{lem:app-translation-kl}
Let $q_0:=q_{\bar\mu_\star}$ be the density of $\pi_{\bar\mu_\star}^{\mathrm P}$.
For $w\ge 0$, define the translated density
\[
q_{0,w}(z):=q_0(z-w)\,\mathbf 1_{\{z>w\}},
\qquad z>0,
\]
that is, the law of $U+w$ when $U\sim \pi_{\bar\mu_\star}^{\mathrm P}$.
Then there exists a finite constant $C_{\mathrm{tr}}$, depending only on
$(\beta,\bar\mu_\star)$, such that
\[
g(w):=\KL(q_{0,w}\,dz\|q_0\,dz)
\le C_{\mathrm{tr}}\bigl(1+w\log(1+w)\bigr),
\qquad w\ge 0.
\]
\end{lemma}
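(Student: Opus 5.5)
Since $q_{0,w}$ is supported on $(w,\infty)\subset(0,\infty)$, where $q_0>0$ by Lemma~\ref{lem:app-1d-density}(i), absolute continuity holds. Writing out the divergence and substituting $z=u+w$ gives
\[
g(w)=\int_0^\infty q_0(u)\log q_0(u)\,du-\int_0^\infty q_0(u)\log q_0(u+w)\,du
=-h(q_0)+\E\bigl[-\log q_0(U+w)\bigr],
\]
with $U\sim\pi^{\mathrm P}_{\bar\mu_\star}$. The first term is a finite constant depending only on $(\beta,\bar\mu_\star)$, because $\int q_0|\log q_0|<\infty$ by Lemma~\ref{lem:app-1d-density}(iv). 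This also justifies splitting the logarithm of the ratio into two separately integrable pieces, provided the second piece is finite, which the next step shows.

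For the second term I will use the global bound \eqref{eq:app-global-log-q} with $x=U+w>0$:
\[
-\log q_0(U+w)\le C\bigl(1+(U+w)\log(1+U+w)+|\log(U+w)|\bigr).
\]
I then bound each piece in expectation.

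\textbf{Superlinear piece.} By convexity and monotonicity of $x\mapsto x\log(1+x)$,
\[
(U+w)\log(1+U+w)\le 2U\log(1+2U)+2w\log(1+2w).
\]
Moreover, $\log(1+2w)\le \log 2+\log(1+w)$, so $w\log(1+2w)\le C(1+w\log(1+w))$ once the linear term $w\log 2$ is absorbed using $w\le 1+w\log(1+w)$ for $w\ge 1$, together with boundedness on $[0,1]$. The $U$-dependent part has finite expectation by the exponential moments in Lemma~\ref{lem:app-1d-density}(iv).

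\textbf{Logarithmic piece.} If $U+w\ge1$, then $|\log(U+w)|\le \log(1+U)+\log(1+w)$, and both terms are controlled by $1+U+w\log(1+w)$. If $U+w<1$, then $|\log(U+w)|=-\log(U+w)\le -\log U=|\log U|$ since $w\ge0$. In both cases the bound is at most $|\log U|+\log(1+U)+\log(1+w)$, and $\E|\log U|<\infty$ by Lemma~\ref{lem:app-1d-density}(iv).

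Collecting the terms gives $g(w)\le C_{\mathrm{tr}}(1+w\log(1+w))$ with $C_{\mathrm{tr}}$ depending only on $(\beta,\bar\mu_\star)$.

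The only delicate point is the a.e. nature of \eqref{eq:app-global-log-q}. This is harmless: $U+w$ has a density, so a Lebesgue-null exceptional set has probability zero. Rigor therefore rests on the bound \eqref{eq:app-global-log-q} and the integrability in part (iv), both of which are already available.
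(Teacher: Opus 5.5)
Your proposal is correct and follows the paper's proof essentially step for step. Both arguments change variables to $g(w)=\E[\log q_0(U)-\log q_0(U+w)]$, use finiteness of $\int q_0\abs{\log q_0}$, apply \eqref{eq:app-global-log-q} almost surely to $U+w$, and finish with elementary bounds on the superlinear and logarithmic pieces. The only cosmetic differences are that you bound $(U+w)\log(1+U+w)$ via convexity of $x\mapsto x\log(1+x)$ instead of the paper's factorization $1+U+w\le(1+U)(1+w)$, and that the paper writes $g(w)\le J_0+C\,\E\{\cdots\}$ directly, which avoids the integrability bookkeeping needed to split off $-h(q_0)$.
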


\begin{proof}
Let $U\sim\pi_{\bar\mu_\star}^{\mathrm P}$. By a change of variables,
\[
g(w)=\E[\log q_0(U)-\log q_0(U+w)].
\]
Lemma~\ref{lem:app-1d-density}(iv) gives
$J_0:=\int q_0|\log q_0|<\infty$. Since $U+w$ is absolutely continuous,
\eqref{eq:app-global-log-q} applies to it a.s., and
\[
g(w)\le J_0+C\E\{1+(U+w)\log(1+U+w)+|\log(U+w)|\}.
\]
The elementary bound $1+U+w\le(1+U)(1+w)$ gives
\[
(U+w)\log(1+U+w)
\le U\log(1+U)+w\log(1+U)+(U+w)\log(1+w),
\]
whose expectation is bounded by $C(1+w\log(1+w))$. Also,
\[
|\log(U+w)|\le |\log U|+\log(1+U)+\log(1+w),
\]
because the negative part decreases when $w$ is added and the positive part is
bounded by $\log(1+U+w)$. Lemma~\ref{lem:app-1d-density}(iv) makes all
constants finite, and the claim follows.
\end{proof}

\subsection{KL of the stationary initial law}\label{app:lower-initial}

We now control the KL contribution from the state $X(0)$.
The point of the lower-bound subclass is that all irrelevant coordinates factor
out, leaving a fixed-dimensional comparison involving only the target coordinate
and its $k$ active parents.
Let \(\bar\pi_k\) denote the stationary law of the \((k+1)\)-dimensional block
\((Z,\mathbf Y)\) in which \(Y^{(1)},\dots,Y^{(k)}\) are independent
Poisson shot-noise coordinates with rate \(\bar\mu\), and \(Z\) has intensity
\(\bar\mu_\star+\thetalo\sum_{m=1}^k Y^{(m)}_{t-}\).

\begin{proposition}
\label{prop:app-initKL-k}
For every $d\ge k+2$ and every $S\in\mathcal S_{d,k}(i_\star)$,
\[
\KL(\pi_S\|\pi_0)
=
\KL\!\bigl(\bar\pi_k\|\pi_{\bar\mu_\star}^{\mathrm P}\otimes
(\pi_{\bar\mu}^{\mathrm P})^{\otimes k}\bigr)
=:C_{\mathrm{init}}.
\]
The constant $C_{\mathrm{init}}$ is finite and depends only on
$(\beta,\bar\mu,\bar\mu_\star,\thetalo,k)$; in particular it is independent of
both $S$ and $d$.
\end{proposition}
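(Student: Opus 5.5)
Under $\Theta^{(S)}$ only row $i_\star$ is nonzero, so every coordinate $j\neq i_\star$ is an autonomous stationary Poisson shot noise with rate $\bar\mu$, and these coordinates are mutually independent. The coordinate $X_{i_\star}$ is driven only by its own events, whose intensity $\bar\mu_\star+\thetalo\sum_{j\in S}X_j(t-)$ depends only on the source coordinates in $S$. Hence the stationary law factorizes as
\[
\pi_S=\mathcal L\bigl(X_{i_\star},(X_j)_{j\in S}\bigr)\otimes(\pi^{\mathrm P}_{\bar\mu})^{\otimes([d]\setminus(S\cup\{i_\star\}))},
\]
and after relabeling the block law is exactly $\bar\pi_k$. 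Likewise $\pi_0=\pi^{\mathrm P}_{\bar\mu_\star}\otimes(\pi^{\mathrm P}_{\bar\mu})^{\otimes(d-1)}$. To justify the factorization, I would use the stationary cluster representation: roots of the types outside $S\cup\{i_\star\}$ and their clusters are independent of everything else. KL is additive over products and vanishes on identical factors, so $\KL(\pi_S\|\pi_0)$ equals the block KL, which gives the identity and its independence of $S$ and $d$.

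For finiteness I would condition on the sources. Under $\bar\pi_k$ the source marginal is exactly $(\pi^{\mathrm P}_{\bar\mu})^{\otimes k}$, so the chain rule (Lemma~\ref{lem:app-chain} in its static form) gives
\[
C_{\mathrm{init}}=\E\Bigl[\KL\bigl(\mathcal L(Z\mid \mathbf Y_{(-\infty,0]})\,\big\|\,\pi^{\mathrm P}_{\bar\mu_\star}\bigr)\Bigr].
\]
Here I condition on the whole source path, which by convexity of KL (or data processing) upper bounds the KL obtained by conditioning on $\mathbf Y(0)$ alone. Given the source path, the events of $Z$ form a Poisson process of rate $\bar\mu_\star+\thetalo\sum_m Y^{(m)}(s-)$. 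By superposition, $Z\stackrel d= U+W$, where $U\sim\pi^{\mathrm P}_{\bar\mu_\star}$ is independent and $W=\int_{-\infty}^0 e^{\beta s}\,dM'_s$ is shot noise from an independent Cox component of rate $\thetalo\sum_m Y^{(m)}(s-)$. Conditioning further on $W$ (again increasing KL by convexity), the conditional law of $Z$ is the translate $q_{0,W}$. Lemma~\ref{lem:app-translation-kl} then gives
\[
C_{\mathrm{init}}\le C_{\mathrm{tr}}\,\E\bigl[1+W\log(1+W)\bigr]\le C\bigl(1+\E[W^2]\bigr).
\]

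The main remaining step is bounding $\E[W^2]$. Its mean is $\thetalo k\bar\mu/\beta^2$. For the second moment, I would use the conditional Poisson variance formula together with the Cox mean-square term, and bound the latter by $\E[(\sum_m Y^{(m)})^2]/\beta^2$, which is finite by Lemma~\ref{lem:app-moment-k}-type shot-noise moments. All constants then depend only on $(\beta,\bar\mu,\bar\mu_\star,\thetalo,k)$.

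The delicate point is making the convexity or conditioning step rigorous: the enlarged joint law must be a valid mixture representation, with $\mathcal L(Z\mid\mathbf Y)=\E[q_{0,W}\mid\mathbf Y]$, so that joint convexity of KL applies. The requirement $d\ge k+2$ only ensures that admissible $S$ exist.
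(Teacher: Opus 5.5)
Your proposal is correct and follows the paper's proof essentially step for step. The identity comes from permuting to a product form and using additivity of relative entropy. Finiteness comes from the chain rule, conditioning on the full source path (which only increases the KL, so your displayed equality should read $\le$), the superposition $Z_0=U+W$ with $U$ independent of the source path and $W$, convexity over the translates $q_{0,w}$, and Lemma~\ref{lem:app-translation-kl}.

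The only deviation is the final moment control. You bound $\E[W\log(1+W)]\le\E[W^2]$ and compute the second moment via the conditional Poisson mean and variance. The paper instead shows that $W$ has exponential moments of every order, using the conditional Laplace functional and Campbell's formula. Your second-moment route is more economical and suffices, since Lemma~\ref{lem:app-translation-kl} only requires $\E[W\log(1+W)]<\infty$.
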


\begin{proof}
Fix $S=\{s_1,\dots,s_k\}\in\mathcal S_{d,k}(i_\star)$ and order the remaining
coordinates in any fixed way. Let $R_S$ be the coordinate permutation that puts
$i_\star$ first, then the elements of $S$, and then all other coordinates. KL is
invariant under this bijection.

Under $\Theta^{(S)}$, all coordinates outside $\{i_\star\}\cup S$ are independent
Poisson shot-noise coordinates with stationary law $\pi_{\bar\mu}^{\mathrm P}$,
and they are independent of the target/source block. The nontrivial block
$(Z_t,\mathbf Y_t):=(X_{i_\star}(t),(X_{s_1}(t),\dots,X_{s_k}(t)))$ satisfies
\[
\begin{aligned}
dY_t^{(m)}&=-\beta Y_t^{(m)}\,dt+dN_t^{(m)},\qquad m=1,\dots,k,\\
dZ_t&=-\beta Z_t\,dt+dM_t,
\end{aligned}
\]
where $N^{(1)},\dots,N^{(k)}$ are independent Poisson processes of rate
$\bar\mu$ and $M$ has predictable intensity
$\bar\mu_\star+\thetalo\sum_{m=1}^kY_{t-}^{(m)}$. If $\bar\pi_k$ denotes the
stationary law of this block, then
\[
R_S\#\pi_S=\bar\pi_k\otimes(\pi_{\bar\mu}^{\mathrm P})^{\otimes(d-k-1)}.
\]
Under the baseline model,
\[
R_S\#\pi_0=\pi_{\bar\mu_\star}^{\mathrm P}\otimes
(\pi_{\bar\mu}^{\mathrm P})^{\otimes k}\otimes
(\pi_{\bar\mu}^{\mathrm P})^{\otimes(d-k-1)}.
\]
Product additivity of relative entropy gives
\[
\KL(\pi_S\|\pi_0)=
\KL\!\bigl(\bar\pi_k\|\pi_{\bar\mu_\star}^{\mathrm P}\otimes
(\pi_{\bar\mu}^{\mathrm P})^{\otimes k}\bigr),
\]
and the right-hand side is independent of the particular set $S$ by
exchangeability of the source coordinates.

It remains only to prove finiteness. Let
$\mathcal H=\sigma(Y_s^{(m)}:s\le0,1\le m\le k)$ be the stationary source
history. Conditional on $\mathcal H$, the target process on $(-\infty,0]$ is an
inhomogeneous Poisson process with intensity
$\bar\mu_\star+\thetalo\sum_mY_{t-}^{(m)}$. Decompose it as $M=M^{(0)}+M^{(1)}$,
where, conditional on $\mathcal H$, $M^{(0)}$ is homogeneous Poisson of rate
$\bar\mu_\star$, $M^{(1)}$ is Poisson with intensity
$\thetalo\sum_mY_{t-}^{(m)}\,dt$, and the two are independent. Then
\[
Z_0=U+W,
\qquad
U:=\int_{(-\infty,0]}e^{\beta s}\,dM_s^{(0)},\qquad
W:=\int_{(-\infty,0]}e^{\beta s}\,dM_s^{(1)}.
\]
Here $U\sim\pi_{\bar\mu_\star}^{\mathrm P}$ and $U$ is independent of
$(\mathcal H,W)$.

The $\mathbf Y_0$-marginal of $\bar\pi_k$ is
$(\pi_{\bar\mu}^{\mathrm P})^{\otimes k}$. Hence the entropy chain rule and
conditional Jensen's inequality give
\[
\KL\!\bigl(\bar\pi_k\|\pi_{\bar\mu_\star}^{\mathrm P}\otimes
(\pi_{\bar\mu}^{\mathrm P})^{\otimes k}\bigr)
= \E\bigl[\KL(P_{Z_0\mid \mathbf Y_0}\|\pi_{\bar\mu_\star}^{\mathrm P})\bigr]
\le \E\bigl[\KL(P_{Z_0\mid \mathcal H}\|\pi_{\bar\mu_\star}^{\mathrm P})\bigr].
\]
Indeed, since \(\mathbf Y_0\) is \(\mathcal H\)-measurable,
\(P_{Z_0\mid \mathbf Y_0}\) is obtained by averaging
\(P_{Z_0\mid \mathcal H}\) over the conditional law of \(\mathcal H\) given
\(\mathbf Y_0\), and relative entropy is convex in its first argument.
Given $\mathcal H$, the conditional law of $Z_0$ is the mixture of translates
$q_{0,w}(z)\,dz$ with mixing law $P_{W\mid\mathcal H}$. Convexity of KL and
Lemma~\ref{lem:app-translation-kl} therefore yield
\[
\KL(P_{Z_0\mid \mathcal H}\|\pi_{\bar\mu_\star}^{\mathrm P})
\le \E[g(W)\mid\mathcal H]
\le C\, \E[1+W\log(1+W)\mid\mathcal H].
\]
Thus finiteness follows once $\E[W\log(1+W)]<\infty$.

In fact $W$ has exponential moments of all orders. For $\eta>0$, the conditional
Laplace functional gives
\[
\E[e^{\eta W}\mid\mathcal H]
=
\exp\!\left(\thetalo\int_{-\infty}^0
(e^{\eta e^{\beta s}}-1)\sum_{m=1}^kY_{s-}^{(m)}\,ds\right)
\le
\exp\!\left(\thetalo(e^\eta-1)V\right),
\]
where
\[
V:=\int_{-\infty}^0 e^{\beta s}\sum_{m=1}^kY_{s-}^{(m)}\,ds.
\]
Using the stationary shot-noise representation of the sources and Tonelli's
theorem,
\[
V=\sum_{m=1}^k\int_{(-\infty,0]}(-u)e^{\beta u}\,dN_u^{(m)}
=\int_{(-\infty,0]}(-u)e^{\beta u}\,d\widetilde N_u,
\]
where $\widetilde N$ is homogeneous Poisson with rate $k\bar\mu$. The kernel
$u\mapsto(-u)e^{\beta u}\mathbf 1_{\{u\le0\}}$ is bounded and integrable, so
Campbell's formula implies $\E[e^{cV}]<\infty$ for every $c>0$. Consequently
$\E[e^{\eta W}]<\infty$ for every $\eta>0$, and in particular
$\E[W\log(1+W)]<\infty$. This proves $C_{\mathrm{init}}<\infty$ and completes
the proof.
\end{proof}

\subsection{Dynamic KL bound}\label{app:lower-dynamic}

It remains to control the entropy accumulated over the observation window.
We start by computing the stationary second moment of the aggregated source block.

\begin{lemma}\label{lem:app-moment-k}
For $S\in\mathcal S_{d,k}(i_\star)$, set
$\Sigma_S(t):=\sum_{j\in S}X_j(t)$. Under $\Theta^{(S)}$, $\Sigma_S$ is a
stationary Poisson shot-noise OU process driven by a homogeneous Poisson process
of rate $k\bar\mu$, and
\[
\E[\Sigma_S(t)^2]
=\frac{k^2\bar\mu^2}{\beta^2}+\frac{k\bar\mu}{2\beta}
=:C_{\mathrm{path}},
\qquad t\in\R.
\]
\end{lemma}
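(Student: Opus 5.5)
Under $\Theta^{(S)}$ the only nonzero row is row $i_\star$, so every source coordinate $j\in S$ has $\lambda_j\equiv\bar\mu$. The counting processes $N_j$, $j\in S$, are therefore independent homogeneous Poisson processes of rate $\bar\mu$. Since $i_\star\notin S$, none of them is affected by the target. Summing the SDEs $dX_j=-\beta X_j\,dt+dN_j$ over $j\in S$ gives
\[
d\Sigma_S(t)=-\beta\Sigma_S(t)\,dt+d\widetilde N(t),\qquad \widetilde N:=\sum_{j\in S}N_j .
\]
By superposition, $\widetilde N$ is a homogeneous Poisson process of rate $k\bar\mu$. In the stationary regime,
\[
\Sigma_S(t)=\int_{(-\infty,t]}e^{-\beta(t-s)}\,d\widetilde N(s),
\]
which is exactly the stationary Poisson shot-noise OU process $U_{k\bar\mu}$ of Lemma~\ref{lem:app-1d-density}, with law $\pi^{\mathrm P}_{k\bar\mu}$ for every $t$.

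For the second moment, I will use Campbell's formulas for the Poisson integral with kernel $f(u)=e^{-\beta u}\mathbf 1_{\{u\ge0\}}$. The mean is $\E[\Sigma_S(t)]=k\bar\mu\int_0^\infty e^{-\beta u}\,du=k\bar\mu/\beta$, and the variance is $\Var(\Sigma_S(t))=k\bar\mu\int_0^\infty e^{-2\beta u}\,du=k\bar\mu/(2\beta)$. Alternatively, one can differentiate the characteristic function \eqref{eq:app-1d-CF} with $r=k\bar\mu$ twice at $0$: the cumulants are $\kappa_1=(r/\beta)\int_0^1 du$ and $\kappa_2=(r/\beta)\int_0^1u\,du=r/(2\beta)$. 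Then $\E[\Sigma_S^2]=\kappa_2+\kappa_1^2=k^2\bar\mu^2/\beta^2+k\bar\mu/(2\beta)$, as claimed.

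The only point needing care is the independence and Poisson claim for the sources. It is checked directly: row $j\in S$ of $\Theta^{(S)}$ vanishes, so the compensator of $N_j$ is $\bar\mu\,dt$. By Watanabe's characterization, and since distinct $N_j$ have no common jumps in a simple multivariate point process, the $N_j$ are independent Poisson processes. Everything else is a routine computation, and the result holds for every $t\in\R$ by stationarity.
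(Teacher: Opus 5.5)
Your proposal is correct and follows the paper's route. You superpose the $k$ independent rate-$\bar\mu$ source processes into a rate-$k\bar\mu$ Poisson process and apply Campbell's formula to get mean $k\bar\mu/\beta$ and variance $k\bar\mu/(2\beta)$; your cumulant check via the characteristic function and your Watanabe-type justification of the sources' independence are details the paper leaves implicit.
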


\begin{proof}
The $k$ parent coordinates are independent Poisson shot-noise OU processes with
rate $\bar\mu$, so their sum is driven by the superposed Poisson process of rate
$k\bar\mu$. Campbell's formula gives
$\E[\Sigma_S]=k\bar\mu/\beta$ and
$\operatorname{Var}(\Sigma_S)=k\bar\mu/(2\beta)$, which proves the statement for
$\E[\Sigma_S^2]$.
\end{proof}

In the following proposition, we derive the bound on the dynamic KL term.
\begin{proposition}\label{prop:app-dynamic-KL}
For every $S\in\mathcal S_{d,k}(i_\star)$,
\[
\int \KL\!\bigl(K_S^T(x,\cdot)\|K_0^T(x,\cdot)\bigr)\,\pi_S(dx)
\le
\frac{\thetalo^2}{\bar\mu_\star}\,C_{\mathrm{path}}\,T.
\]
Consequently,
\[
\KL(P_S^T\|P_0^T)\le C_{\mathrm{init}}
+\frac{\thetalo^2}{\bar\mu_\star}\,C_{\mathrm{path}}\,T.
\]
\end{proposition}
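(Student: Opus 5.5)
Conditional on $X(0)=x$, the laws $K_S^T(x,\cdot)$ and $K_0^T(x,\cdot)$ are laws of $d$-variate point processes on $[0,T]$ whose intensities coincide in every coordinate except $i_\star$, where $\lambda^{(S)}_{i_\star}(t)=\bar\mu_\star+\thetalo\Sigma_S(t-)$ and $\lambda^{(0)}_{i_\star}\equiv\bar\mu_\star$. Note that under both models the source coordinates have identical intensities $\bar\mu$ that do not depend on $N_{i_\star}$. I will apply Jacod's Girsanov formula for multivariate point processes \cite{jacod1975multivariate}. Since $\lambda^{(0)}_{i_\star}>0$ and all other intensities agree, $K_S^T(x,\cdot)\ll K_0^T(x,\cdot)$, and the log-likelihood ratio is
\[
\log\frac{dK_S^T(x,\cdot)}{dK_0^T(x,\cdot)}=\int_0^T\log\frac{\lambda^{(S)}_{i_\star}(t)}{\bar\mu_\star}\,dN_{i_\star}(t)-\int_0^T\bigl(\lambda^{(S)}_{i_\star}(t)-\bar\mu_\star\bigr)\,dt .
\]
Taking expectation under the $S$-model, I replace $dN_{i_\star}$ by its compensator $\lambda^{(S)}_{i_\star}dt$. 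This is justified since the integrand is predictable and integrable, because $\log(1+u)\le u$ and $\Sigma_S$ has finite moments by Lemma~\ref{lem:app-moment-k}. The result is
\[
\KL\bigl(K_S^T(x,\cdot)\|K_0^T(x,\cdot)\bigr)=\E_{S,x}\int_0^T \bar\mu_\star\,\phi\!\Bigl(\frac{\lambda^{(S)}_{i_\star}(t)}{\bar\mu_\star}\Bigr)dt,\qquad \phi(u)=u\log u-u+1 .
\]

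Next I use the elementary inequality $\phi(u)\le (u-1)^2$ for $u>0$. It follows from $\log u\le u-1$, which gives $u\log u-u+1\le u(u-1)-u+1=(u-1)^2$. Applying it yields the pointwise bound $\bar\mu_\star\phi(\lambda/\bar\mu_\star)\le(\lambda-\bar\mu_\star)^2/\bar\mu_\star=\thetalo^2\Sigma_S(t-)^2/\bar\mu_\star$.

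I then integrate against $\pi_S(dx)$. This turns the conditional expectation into the stationary expectation $\E_S$, since $X(0)\sim\pi_S$ makes the whole process stationary. Tonelli's theorem then gives
\[
\int\KL\bigl(K_S^T(x,\cdot)\|K_0^T(x,\cdot)\bigr)\pi_S(dx)\le\frac{\thetalo^2}{\bar\mu_\star}\int_0^T\E_S[\Sigma_S(t-)^2]\,dt=\frac{\thetalo^2}{\bar\mu_\star}C_{\mathrm{path}}T
\]
by Lemma~\ref{lem:app-moment-k}, using that $\Sigma_S(t-)=\Sigma_S(t)$ a.s. for fixed $t$. The consequence then follows from Lemma~\ref{lem:app-chain} together with Proposition~\ref{prop:app-initKL-k}.

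The main obstacle is the rigorous application of the Girsanov/likelihood formula. I need to check that the density process is a true martingale, rather than merely a local one, so that the log-likelihood formula holds. I also need to justify the compensator substitution. I would handle both by localizing with the stopping times $\tau_n=\inf\{t:\Sigma_S(t)\ge n\}$, on which the intensity ratio is bounded. On each $[0,T\wedge\tau_n]$ the formula is standard. I would then let $n\to\infty$ using monotone convergence, since the integrand $\phi\ge0$, together with lower semicontinuity of KL under the increasing filtrations. Finiteness of all moments of $\Sigma_S$, which holds because it is Poisson shot noise, makes these limits harmless.
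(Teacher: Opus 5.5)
Your proposal is correct and follows the paper's proof essentially step by step: Jacod's likelihood formula applied to the single differing coordinate $i_\star$, the bound $a\log(a/b)-a+b\le (a-b)^2/b$ with $b=\bar\mu_\star$, integration against $\pi_S$ using stationarity and Lemma~\ref{lem:app-moment-k}, and then Lemma~\ref{lem:app-chain} together with Proposition~\ref{prop:app-initKL-k}. The only difference is that you justify the Girsanov step by localizing with the stopping times $\tau_n$, whereas the paper simply asserts $K_S^T(x,\cdot)\ll K_0^T(x,\cdot)$ from nonexplosion and the lower bound $\bar\mu_\star>0$; this adds rigor but does not change the argument.
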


\begin{proof}
Fix $x\in\R^d$. Under model $S$, the intensity is
\[
\lambda_{i_\star}^{(S)}(t)=\bar\mu_\star+\thetalo\sum_{j\in S}X_j(t-),
\qquad
\lambda_i^{(S)}(t)\equiv\bar\mu \quad (i\neq i_\star),
\]
whereas under the baseline model
\[
\lambda_{i_\star}^{(0)}(t)\equiv\bar\mu_\star,
\qquad
\lambda_i^{(0)}(t)\equiv\bar\mu \quad (i\neq i_\star).
\]

For \(\pi_S\)-a.e. initial state \(x\), both point-process systems are
nonexplosive on \([0,T]\), and the only differing intensity is the
\(i_\star\)-coordinate. Since both target intensities are bounded below by
\(\bar\mu_\star>0\), the conditional laws satisfy
\(K_S^T(x,\cdot)\ll K_0^T(x,\cdot)\).
Hence, we can apply Jacod's \cite{jacod1975multivariate} Girsanov formula for multivariate point processes (\cite[Chapter~VI]{bremaud1981point} and \cite{kabanov1978capacity} for more details) to the single differing coordinate $i_\star$,
\begin{align}
\KL\!\bigl(K_S^T(x,\cdot)\|K_0^T(x,\cdot)\bigr)
&=
\E_S\!\left[
\int_0^T\!\left(
\lambda_{i_\star}^{(S)}(t)\log\frac{\lambda_{i_\star}^{(S)}(t)}{\bar\mu_\star}
-\lambda_{i_\star}^{(S)}(t)+\bar\mu_\star
\right)\!dt
\,\middle|\, X(0)=x
\right].
\label{eq:app-cond-KL}
\end{align}
For $a\ge 0$ and $b>0$, $a\log(a/b)-a+b\le (a-b)^2/b$ and the resulting entropy term is finite after averaging over $\pi_S$ with $b=\bar\mu_\star$ and
Lemma~\ref{lem:app-moment-k}. Applying this with $a=\bar\mu_\star+\thetalo\sum_{j\in S}X_j(t-)$
and  $b=\bar\mu_\star$ gives
\[
\KL\!\bigl(K_S^T(x,\cdot)\|K_0^T(x,\cdot)\bigr)
\le
\frac{\thetalo^2}{\bar\mu_\star}
\int_0^T \E_S\!\Bigl[\Bigl(\sum_{j\in S}X_j(t)\Bigr)^2\Bigm| X(0)=x\Bigr]\,dt.
\]
Integrating over $\pi_S(dx)$ and applying stationarity yields
\[
\int \KL\!\bigl(K_S^T(x,\cdot)\|K_0^T(x,\cdot)\bigr)\,\pi_S(dx)
\le
\frac{\thetalo^2}{\bar\mu_\star}\int_0^T\!\E_S[\Sigma_S(t)^2]\,dt
=\frac{\thetalo^2}{\bar\mu_\star}\,C_{\mathrm{path}}\,T.
\]
The last claim follows from Lemma~\ref{lem:app-chain} and Proposition~\ref{prop:app-initKL-k}.
\end{proof}

\subsection{Proof of Theorem~\ref{thm:lower-bound}}\label{app:lower-fano}
We now turn the KL bound into a lower bound over the hard subclass using Fano's inequality\cite{gerchinovitz2020fano}.
Let
\[
M:=|\mathcal S_{d,k}(i_\star)|=\binom{d-1}{k},
\]
and let $S^\star$ be uniform on $\mathcal S_{d,k}(i_\star)$. The following
finite-$d$ bound is the quantitative form used in the main text.
\begin{proposition}\label{prop:app-finite-fano}
For every $d\ge k+2$ and every $T>0$,
\[
\inf_{\widehat S}\sup_{S\in\mathcal S_{d,k}(i_\star)}
\Pbb_S(\widehat S(Y_T)\neq S)
\ge
1-
\frac{C_{\mathrm{init}}+\frac{\thetalo^2}{\bar\mu_\star}C_{\mathrm{path}}T+
\log 2}
{\log\binom{d-1}{k}}.
\]
\end{proposition}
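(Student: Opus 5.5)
The plan is to combine Fano's inequality for a uniform prior with the mutual-information bound of Lemma~\ref{lem:app-mi-baseline} and the uniform KL bound of Proposition~\ref{prop:app-dynamic-KL}. Let $S^\star$ be uniform on $\mathcal S_{d,k}(i_\star)$ and let $Y_T$ be generated under $\Theta^{(S^\star)}$ with background rates $\bar\mu,\bar\mu_\star$. The condition $d\ge k+2$ ensures $M=\binom{d-1}{k}\ge 2$, so $\log M>0$ and the right-hand side is well defined. For any measurable estimator $\widehat S=\widehat S(Y_T)$ (after composing with a projection onto $\mathcal S_{d,k}(i_\star)$, which can only decrease the error), the finite-hypothesis Fano inequality of \cite{gerchinovitz2020fano} gives
\[
\frac1M\sum_{S}\Pbb_S(\widehat S\neq S)\ \ge\ 1-\frac{I(S^\star;Y_T)+\log 2}{\log M}.
\]
The worst case dominates the average, so the same lower bound holds for $\sup_S\Pbb_S(\widehat S\neq S)$. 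Since the bound does not depend on $\widehat S$, it also holds for the infimum over estimators.

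Next I bound the mutual information. Applying Lemma~\ref{lem:app-mi-baseline} with $V=S^\star$, $P_v=P_S^T$ and reference $Q=P_0^T$, which does not depend on $S$, gives
\[
I(S^\star;Y_T)\le \frac1M\sum_S\KL(P_S^T\|P_0^T).
\]
By Proposition~\ref{prop:app-dynamic-KL}, each summand is at most $C_{\mathrm{init}}+\frac{\thetalo^2}{\bar\mu_\star}C_{\mathrm{path}}T$. This bound is uniform in $S$ because Proposition~\ref{prop:app-initKL-k} shows $C_{\mathrm{init}}$ is finite and independent of $S$ and $d$, and Lemma~\ref{lem:app-moment-k} gives the same property for $C_{\mathrm{path}}$. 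The average is therefore bounded by the same quantity, and substituting it into the Fano bound yields exactly the claimed inequality.

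The main obstacle is measure-theoretic: the observation $Y_T=(X(0),N_{[0,T]})$ lives in a non-discrete space, so I must make sure that Fano's inequality and the identity $I=\frac1M\sum_S\KL(P_S^T\|\bar P)$ are valid there. To handle this, I would work on the Polish path space of counting measures on $[0,T]$ times $\R_+^d$, and note that all $P_S^T$ are absolutely continuous with respect to $P_0^T$; this holds for the initial laws by finiteness of $C_{\mathrm{init}}$ and for the paths by the Girsanov step. Then $I(S^\star;Y_T)=\KL(P_{S^\star,Y_T}\|P_{S^\star}\otimes P_{Y_T})$ is well defined, and the general-measure version of Fano (the data-processing form in \cite{gerchinovitz2020fano}) applies directly.
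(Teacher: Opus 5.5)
Your proposal is correct and follows essentially the same route as the paper: Fano's inequality for the uniform prior, Lemma~\ref{lem:app-mi-baseline} with reference $Q=P_0^T$, and the uniform KL bound of Proposition~\ref{prop:app-dynamic-KL} (which already packages the chain rule and the initial-law constant $C_{\mathrm{init}}$). Your additional remarks on $M\ge 2$, projecting estimators onto $\mathcal S_{d,k}(i_\star)$, and the general-measure form of Fano are harmless refinements that the paper leaves implicit.
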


\begin{proof}
For any estimator $\widehat S=\widehat S(Y_T)$, Fano's inequality gives
\[
\sup_{S\in\mathcal S_{d,k}(i_\star)}\Pbb_S(\widehat S\neq S)
\ge
1-\frac{I(S^\star;Y_T)+\log 2}{\log M}.
\]
The baseline mutual-information inequality in Lemma~\ref{lem:app-mi-baseline},
with $Q=P_0^T$, yields
\[
I(S^\star;Y_T)
\le \frac1M\sum_{S\in\mathcal S_{d,k}(i_\star)}\KL(P_S^T\|P_0^T).
\]
By Lemma~\ref{lem:app-chain}, Proposition~\ref{prop:app-initKL-k}, and
Proposition~\ref{prop:app-dynamic-KL}, each summand satisfies
\[
\KL(P_S^T\|P_0^T)
\le C_{\mathrm{init}}+\frac{\thetalo^2}{\bar\mu_\star}C_{\mathrm{path}}T.
\]
Substituting this into Fano's inequality proves the claim.

\end{proof}
We now prove the information-theoretic lower bound.
\begin{proof}[Proof of Theorem~\ref{thm:lower-bound}]

Let \(C_{\mathrm{path}}\) be the constant from Lemma~\ref{lem:app-moment-k}, and let
\(C_{\mathrm{init}}\) be the constant from Proposition~\ref{prop:app-initKL-k}.
Since $k$ is fixed,
\[
\log\binom{d-1}{k}=k\log d+O(1),\qquad d\to\infty.
\]
Hence there are constants $a_k>0$ and $d_1\ge k+2$, depending only on $k$, such
that
\[
\log\binom{d-1}{k}\ge a_k\log d,\qquad d\ge d_1.
\]
Choose $d_0\ge d_1$ large enough that
\[
\frac{C_{\mathrm{init}}+\log 2}{\log\binom{d-1}{k}}\le \frac14,
\qquad d\ge d_0,
\]
and set
\[
c:=\frac{a_k\bar\mu_\star}{4\thetalo^2 C_{\mathrm{path}}},
\qquad \varepsilon:=\frac12.
\]
If $d\ge d_0$ and $T\le c\log d$, then
\[
\frac{\thetalo^2}{\bar\mu_\star}C_{\mathrm{path}}T
\le \frac14\log\binom{d-1}{k}.
\]
The finite bound in Proposition~\ref{prop:app-finite-fano} therefore gives
\[
\inf_{\widehat S}\sup_{S\in\mathcal S_{d,k}(i_\star)}
\Pbb_S(\widehat S(Y_T)\neq S)
\ge \frac12.
\]
This proves the theorem with the above constants. 
The same argument also gives the
corresponding full-class impossibility statement, since $\mathcal G_{d,k}^{\mathrm{sub}}(i_\star)\subseteq\mathcal G_{d,k}$. Indeed, any full support estimator $\widehat G(Y_T)$ induces
a row estimator $\widehat S(Y_T)$ by taking its estimated support in row
$i_\star$. On the subclass, the event $\{ \widehat S(Y_T)\ne S \}$ is contained
in the event that the full support is estimated incorrectly, and therefore
\[
\inf_{\widehat G}
\sup_{(\mu,\Theta)\in\mathcal G_{d,k}}
\Pbb_{\mu,\Theta}\!\left(\widehat G(Y_T)\ne \supp(\Theta)\right)
\ge
\inf_{\widehat S}
\sup_{S\in\mathcal S_{d,k}(i_\star)}
\Pbb_S\!\left(\widehat S(Y_T)\ne S\right).
\]
\end{proof}


\end{document}